\theoremstyle{plain}
\newtheorem{theorem}{Theorem}[section]
\newtheorem{proposition}[theorem]{Proposition}
\newtheorem{lemma}[theorem]{Lemma}
\theoremstyle{definition}
\newtheorem{definition}[theorem]{Definition}
\newtheorem{corollary}[theorem]{Corollary}
\newtheorem{example}[theorem]{Example}
\newtheorem{remark}[theorem]{Remark}
\numberwithin{equation}{theorem}
\newcommand{\C}{{\mathbb C}}
\newcommand{\R}{{\mathbb R}}
\newcommand{\Z}{{\mathbb Z}}
\newcommand{\Q}{{\mathbb Q}}
\renewcommand{\ll}{{\langle}}
\newcommand{\rr}{{\rangle}}
\newcommand{\reg}{{\mathrm{reg}}}
\newcommand{\CR}{{\mathcal{R}}}
\newcommand{\A}{{\mathcal{A}}}
\newcommand{\CH}{{\mathcal{H}}}
\newcommand{\CT}{{\mathcal T}}
\newcommand{\Eis}{\operatorname{Eis}}
\newcommand{\s}{\mathfrak{s}}
\renewcommand{\a}{\mathfrak{a}}
\newcommand{\Res}{\operatorname{Res}}
\newcommand{\res}{\operatorname{res}}
\newcommand{\Ber}{\operatorname{Ber}}
\renewcommand{\ll}{{\langle}}
\newcommand{\aff}{{\mathrm{aff}}}
\begin{document}
\title{Multiple Bernoulli series, an Euler-MacLaurin formula, and Wall crossings}
\author{Arzu Boysal and Mich\`{e}le Vergne}
\date{}
\maketitle

\pagestyle{myheadings}

\markboth{Arzu Boysal and Mich\`{e}le Vergne}{Multiple Bernoulli series}

\begin{abstract}
Using multiple Bernoulli series, we give a formula in the spirit of
Euler-MacLaurin formula. We also give a wall crossing formula and a
decomposition formula for multiple Bernoulli series. The study of
these  series is motivated by  formulae of E.~Witten for volumes of
moduli spaces of flat bundles over a surface.
\end{abstract}

{\small \tableofcontents}

\section{Introduction}

Consider a sequence of vectors $\Phi$ lying in a lattice  $\Lambda$
of a real vector space $V$. We denote the dual lattice of $\Lambda$ by $\Gamma$.  Let
$\Gamma_{\reg}(\Phi)=\{\gamma\in \Gamma |\; \ll \phi,\gamma \rr \neq
0,\;\mbox{for all} \;\phi\in \Phi\}$ be the set of regular elements in $\Gamma$. Let $Z$ be the fundamental
domain in $V$ for $\Lambda$ and $dv$ the  Lebesgue measure giving
measure $1$ to $Z$. Here in introduction we freely identify distributions
and generalized functions via this choice of $dv$.
\medskip

In this paper we study the distribution $\mathcal{B}(\Phi,\Lambda)$
on the torus $V/\Lambda$ defined via its Fourier coefficients as:
\[\int_ Z \mathcal{B}(\Phi, \Lambda)(v) e^{-\ll 2i\pi v , \gamma \rr}dv =
\begin{cases} \frac{1}{\prod_{\phi \in \Phi}\ll 2i\pi \phi,\gamma \rr} & \text{if $\gamma \in \Gamma_{\reg}(\Phi)$,}\\
0 &\text{otherwise.}
\end{cases}
\]
We have
\[\mathcal{B}(\Phi, \Lambda)(v)=\sum_{\gamma \in \Gamma_{\reg}(\Phi)}  \frac{e^{\ll 2i\pi v ,\gamma \rr}}{
\prod_{\phi \in \Phi} \ll 2i\pi \phi,\gamma \rr}. \] This sum, if
not absolutely convergent, has meaning as a distribution.

\medskip

We call $\mathcal{B}(\Phi, \Lambda)$ the \textit{multiple Bernoulli series}
associated to $\Phi$ and $\Lambda$.  They are natural  generalizations of Bernoulli series:
for $\Lambda=\Z \omega$ and $\Phi_k=[\omega,\omega,\ldots, \omega]$,
where  $\omega$ is repeated $k$ times with $k>0$, the distribution
$$\mathcal{B}(\Phi_k,\Lambda)(t\omega)=\sum_{n\neq 0} \frac{e^{2i\pi nt}}{
(2i\pi n)^k}$$ is equal to $-\frac{1}{k!} B(k,t-[t])$ where
$B(k,t)$ denotes the $k^{\text{th}}$ Bernoulli polynomial in
variable $t$.

\medskip

Multiple Bernoulli  series appeared in the work of E.~Witten in the special case
where the sequence $\Phi$ is comprised of positive coroots of a
compact connected Lie group $G$ with multiplicity $2g-1$ and
$\Lambda$ is the coroot lattice of $G$.  Witten shows that
(\cite{wi}, \S 3)  for the above instance of $\Phi$ and $\Lambda$
and for a regular element $v \in Z$, the value of $\mathcal{B}(\Phi,
\Lambda)(v)$ is (upto a scalar depending on $G$ and $g$) the
symplectic volume of the moduli space $\mathcal{M}(G,g,v) $ of flat $G$-connections on
Riemann surface of genus $g$ with one boundary component, around
which the holonomy is determined by $v$.

For example, consider $G=\text{SU}(3)$, denote its simple roots by $\{\alpha_1,\alpha_2\}$ and associated coroots by $\{H_{\alpha_1},H_{\alpha_2}\}$.
Then on a  Riemann surface of genus $g=2$,
the symplectic volume of the moduli space of flat $\text{SU}(3)$-connections with one boundary component  marked by
$v=a_1H_{\alpha_1}+a_2H_{\alpha_2}$ (lying in the fundamental alcove) is given by the following sum
$$\sum_{\substack{n_1\in \Z,\, n_2\in\Z\\ n_1\neq 0,\, n_2\neq 0,\, n_1+n_2\neq 0}}
\frac{e^{2i\pi(n_1a_1+n_2a_2)}}{(2i\pi n_1)^3(2i\pi n_2)^3(2i\pi (n_1+n_2))^3} $$
up to a scalar multiple.

These series have been  extensively studied by A. Szenes (\cite{sze1},\cite{sze2}),
who gave multidimensional residue formulae for them.

\medskip

If $f$ is a function on the real line, smooth and sufficiently decreasing, also with sufficiently decreasing derivatives,
then the Euler-MacLaurin formula  gives
$$\sum_{n\in \Z} f(n)=\int_{\R} f(t) dt+(-1)^{k-1}\frac{1}{k!}\int_{\R}  B(k,t-[t]) f^{[k]}(t)dt$$
where $f^{[k]}$ denotes the $k^{\text{th}}$ derivative of $f$.

We give a natural generalization of this formula involving $\mathcal{B}(\Phi, \Lambda)$ in Theorem \ref{eumacl}.
The difference between the discrete sum $\sum_{\lambda\in \Lambda}f(\lambda)$ and the integral of $f$ over $V$ will only involve  derivatives
$(\prod_{\phi\in Y} \partial_ \phi) f$  over `long subsets' $Y$ of $\Phi$, that is, subsets $Y$ such that  their complement
in $\Phi$ do not span the vector space $V$.

\medskip

We start with giving some properties of the distribution $\mathcal{B}(\Phi,\Lambda)$
that are pertinent for what follows.

\medskip

The distribution $\mathcal{B}(\Phi,\Lambda)$ is periodic with
respect to $\Lambda$.
Moreover, it satisfies a certain recurrence relation which we
outline next.

\medskip
We will call a set with multiplicities \textit{a list}.
Suppose $\Phi$ contains $\phi$ with multiplicity $m$,
then we denote the list that contains $\phi$ with multiplicity $m-1$ by $\Phi-\{\phi\}$; whereas we denote the
list where all copies of $\phi$ are removed by $\Phi\setminus\{\phi\}$.  More generally, for a subset $B$ of $V$, by  $\Phi\setminus B$ we mean the
list of elements of $\Phi$ not lying in $B$.
By   $\Phi\cap B$ we mean the
list of elements of $\Phi$  lying in $B$.

For an element $\phi$ in $\Phi$, we associate two lists
of vectors as follows: First, we consider the list $\Phi-
\{\phi\}$ in $V$, and respectively the distribution
$\mathcal{B}(\Phi- \{\phi\}, \Lambda)$ on $V$. Let
$V_0:=V/\R \phi$ and let $\Lambda_0$ denote the image  of the lattice
$\Lambda$ in $V_0$. Secondly, we consider the list $\Phi_0$  of
elements of $V_0$ consisting of the images  of the elements in
$\Phi-\{\phi\}$.  Then we may consider
$\mathcal{B}(\Phi_0,\Lambda_0)$ as a distribution on $V$ `constant
in the direction of $\phi$'.  Observe that if $\Phi$ contains more than one copy of $\phi$ then
$\Phi_0$ contains the zero vector and consequently $\mathcal{B}(\Phi_0,\Lambda_0)$ is identically zero.

It is clear that the
distribution $\mathcal{B}(\Phi,\Lambda)$ satisfies the following
recurrence relation
\begin{equation}\label{introrec}
\partial_\phi \mathcal{B}(\Phi,\Lambda)=\mathcal{B}(\Phi-\{\phi\},\Lambda)-\mathcal{B}(\Phi_0,\Lambda_0).
\end{equation}

Assuming that  $\Phi=[\phi_1,\phi_2,\ldots,\phi_N]$ spans a pointed
cone, we may define   the tempered distribution $T(\Phi)$ defined on
test functions $f$ by:
\begin{equation}\label{multispline1}
\langle T(\Phi)\,|\,f\rangle = \int_0^\infty\cdots\int_0^\infty
f(\sum_{i=1}^Nt_i \phi_i)dt_1\cdots dt_N.
\end{equation}
In other words, $T([\phi])$ is the Heaviside distribution $\ll H(\phi),f\rr=\int_0^{\infty}f(t\phi) dt$
and $T(\Phi)$ is the convolution product
 $$H(\phi_1)*H(\phi_2)*\cdots *H(\phi_N)$$ of the Heaviside distributions $H(\phi_k)$.

If $\Phi$ generates $V$, $T(\Phi)$  is a positive measure on $V$
given by  integration against a piecewise polynomial function called a
\textit{multispline}. For any $\phi\in \Phi$,
\begin{equation}\label{introdm} \partial_\phi T(\Phi)=T(\Phi-\{\phi\}).\end{equation}
We remark the similarity between the recurrence
relations (\ref{introrec}) and (\ref{introdm}).
In fact we will express $\mathcal{B}(\Phi,\Lambda)$ in terms of superposition of multispline functions in
Theorem \ref{decomposition}.
\medskip

If $\Phi$ generates $V$, then  the periodic function $\mathcal{B}(\Phi,\Lambda)$ is
\textit{piecewise polynomial}; this we reprove in Section \ref{section:poly}.

\bigskip

In the rest of the introduction, for simplicity, we assume that $\Phi$ generates $V$.
We call a connected component of the complement of affine walls (that is, hyperplanes that are generated by some elements of
$\Phi$ and their
translates with respect to the lattice $\Lambda$) in $V$ a \textit{tope}.
For example, Figure \ref{a2} depicts topes associated to the system $\Phi=[e_1,e_2,e_1+e_2]$ and $\Lambda=\Z e_1 \oplus \Z e_2$.

\begin{figure}
\begin{center}
 \includegraphics[width=37mm]{tope2mod.mps}\\
 \caption{$\CT([e_1,e_2,e_1+e_2],\Z e_1 \oplus \Z e_2)$}\label{a2}
 \end{center}
\end{figure}

Given a tope $\tau$ associated to the system $(\Phi,\Lambda)$, we
denote by $\Ber(\Phi,\Lambda,\tau)$ the \textit{polynomial} function
on $V$ such that the restriction of $\mathcal{B}(\Phi,\Lambda)$ to
$\tau$ coincides with the restriction of
$\Ber(\Phi,\Lambda,\tau)(v)$  to $\tau$.

\medskip

Let $W$ be an hyperplane in $V$
spanned by some elements of $\Phi$, and let $E \in \Gamma$ be an
equation of this hyperplane.
We reverse the directions of `half of'  the $\phi_i$ in $\Phi\setminus W$ in order that they all lie in the strict half space determined by $E$,
and define
$$T(\Phi\setminus W,E):=\displaystyle
\prod_{\langle \phi_i,E\rangle <0}-H(-\phi_i)*\prod_{\langle \phi_i,E\rangle > 0} H(\phi_i).$$

$T(\Phi\setminus W,E)$ is a distribution supported on $E\geq 0$. We similarly define
$T(\Phi\setminus W,-E)$.

Now we compare
the polynomials $\Ber(\Phi,\Lambda,\tau)$ associated to two adjacent
topes separated by an hyperplane $W$ (cf. Section \ref{section:Jump}). The jump can be expressed in terms of a lower dimensional multiple Bernoulli
series and a multispline function.
More explicitly, we have the following wall crossing formula:
\begin{theorem}\label{introthe}
Let $\tau_1$ and $\tau_2$ be two adjacent topes separated by the
hyperplane $W$ defined by $E$ with $\langle v,E\rangle >0$ for any $v \in \tau_1$. Denote by
$\tau_{12}$ the tope with respect to the system $(\Phi \cap W,
\Lambda\cap W)$ containing $\overline{\tau_1} \cap
\overline{\tau_2}$ in its closure. Let
$\Ber^{\tau_{12}}:=\Ber(\Phi\cap W,\Lambda \cap W,\tau_{12})dh$ be
the polynomial  density  on $W$ determined by $\tau_{12}$. Then,
$$(\Ber(\Phi,\Lambda,\tau_1)-\Ber(\Phi,\Lambda,\tau_2))dv=
\Ber^{\tau_{12}} *T(\Phi\setminus W,E)-
\Ber^{\tau_{12}} *T(\Phi\setminus W,-E).$$
\end{theorem}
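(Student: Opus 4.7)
The plan is to prove the identity by induction on $|\Phi\setminus W|$, the total multiplicity of elements of $\Phi$ not on the wall $W$, using the recurrence \eqref{introrec} for $\mathcal{B}$ and its analogue \eqref{introdm} for the Heaviside-type distributions $T$.

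For the base case $|\Phi\setminus W|=1$, with $\Phi\setminus W=[\phi_0]$, the ambient space splits as $V=W\oplus\R\phi_0$. Splitting the Fourier series defining $\mathcal{B}(\Phi,\Lambda)$ according to the $\Z E$-piece of $\Gamma$ and its complement, the inner sum in the $\phi_0$-direction is a classical one-dimensional Bernoulli-type series. Its jump across $\{\langle v,E\rangle = 0\}$ is explicit and, once convolved with $\Ber^{\tau_{12}}\,dh$, matches the right-hand side, using that $T([\phi_0], E) = H(\phi_0)$ and $T([\phi_0], -E) = -H(-\phi_0)$.

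For the inductive step, pick any $\phi\in\Phi\setminus W$ and apply $\partial_\phi$ to the claimed identity. On the left, \eqref{introrec} gives $\partial_\phi\Ber(\Phi,\Lambda,\tau_j) = \Ber(\Phi-\{\phi\},\Lambda,\tau_j') - \Ber(\Phi_0,\Lambda_0,\tau_j'')$; since $\phi\notin W$ forces $W+\R\phi=V$, the images of $\tau_1$ and $\tau_2$ in $V/\R\phi$ lie in the same tope (no wall of $(\Phi_0,\Lambda_0)$ separates them, since every such wall comes from a hyperplane of $V$ containing $\phi$, distinct from $W$), so the $\mathcal{B}(\Phi_0,\Lambda_0)$ contributions cancel in the difference. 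On the right, one checks $\partial_\phi T(\Phi\setminus W,\pm E) = T((\Phi-\{\phi\})\setminus W,\pm E)$ (both $\partial_\phi H(\phi)$ and $\partial_\phi(-H(-\phi))$ equal $\delta_0$). Hence the $\partial_\phi$-derivative of the desired identity for $\Phi$ is exactly the desired identity for $\Phi-\{\phi\}$, which holds by the inductive hypothesis. Thus LHS $-$ RHS is a polynomial killed by $\partial_\phi$ for every $\phi\in\Phi\setminus W$.

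The main obstacle is eliminating this constant of integration. Since $\Phi\setminus W$ may not span $V$, the derivative argument leaves LHS $-$ RHS as a polynomial on the quotient $V/\operatorname{span}(\Phi\setminus W)$, which can be identified with a polynomial on $W$ (via the surjection $W\to V/\operatorname{span}(\Phi\setminus W)$). To conclude it vanishes I would evaluate both sides along $W$: the distributional boundary values of $T(\Phi\setminus W,\pm E)$ on $W$ can be computed from the cone structure of the Heaviside convolutions, and, paired with $\Ber^{\tau_{12}}\,dh$, they reproduce the jump of $\mathcal{B}$ across $W$ obtained by applying \eqref{introrec} to the remaining directions in $\Phi\cap W$. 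The bookkeeping for normalizations of $dv$, $dh$, the choice of $E$, and signs in $T(\Phi\setminus W,\pm E)$ is the most delicate part of the argument.
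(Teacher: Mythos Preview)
Your overall strategy---induction on $|\Phi\setminus W|$ using the recurrences \eqref{introrec} and \eqref{introdm}---is exactly the paper's approach, and your treatment of the inductive step (apply $\partial_\phi$, cancel the $\mathcal{B}(\Phi_0,\Lambda_0)$ terms since $\tau_1,\tau_2$ project to the same tope in $V/\R\phi$) is correct.

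Where you go astray is the elimination of the constant of integration. You try to use all $\phi\in\Phi\setminus W$ at once and then worry that $\operatorname{span}(\Phi\setminus W)$ may be a proper subspace of $V$. This detour is unnecessary: for a \emph{single} $\phi\in\Phi\setminus W$ one already has $V=W\oplus\R\phi$, so constancy of $\mathrm{LHS}-\mathrm{RHS}$ along $\R\phi$ together with vanishing on $W$ suffices. The paper's argument for that vanishing is much simpler than the ``distributional boundary values'' computation you outline. First, when $|\Phi\setminus W|\geq 2$ the recurrence \eqref{introrec} shows $\partial_\phi\mathcal{B}(\Phi,\Lambda)$ is a piecewise polynomial function (not just a distribution), hence $\mathcal{B}(\Phi,\Lambda)$ itself is continuous across $W$ and the left-hand side of the claimed identity restricts to zero on $W$. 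Second, the right-hand side is the polynomial $\mathrm{Pol}(\Ber^{\tau_{12}},\Phi\setminus W,E)$, and this vanishes on $W$ to order $|\Phi\setminus W|-1\geq 1$ (Lemma~\ref{L:derPol}(c)); this is immediate from the support of the convolutions, since $T(\Phi\setminus W,E)$ is supported in the cone $\{E\geq 0\}$ and is locally polynomial of degree $|\Phi\setminus W|-\dim(V/W)$ near the apex. So both sides are zero on $W$ and the induction closes. Your description of this last step as ``the most delicate part'' involving boundary traces and sign bookkeeping is off the mark: once you notice the continuity of $\mathcal{B}$ across $W$ in the inductive range, there is nothing delicate left.

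For the base case the paper also takes a slightly different route than your Fourier-splitting sketch: it passes to the sublattice $\Lambda_b=(\Lambda\cap W)\oplus\Z\phi_0$, where $\mathcal{B}(\Phi,\Lambda_b)$ is an explicit product, and then averages over $\Lambda/\Lambda_b$ via \eqref{E:aver}. The point is that the nontrivial cosets contribute functions that are smooth across $W$ (since their $\phi_0$-coordinate is non-integral), so only the trivial coset contributes to the jump, and that contribution is read off from the one-dimensional Bernoulli formula. Your description is consistent with this but vaguer.
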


The left hand side of the above equation is a polynomial density;
it is easily proven that the right hand side is also a polynomial density.

The wall crossing formula given in the above theorem is analogous
to the formula in Boysal-Vergne \cite{bover}. This formula is
also similar to wall crossing formulae in Hamiltonian geometry for
the push-forward of the Liouville measure; indeed, when crossing a
wall, this piecewise polynomial measure changes according to the same
scheme \cite{par}, \cite{gu}.  Our wall crossing formula in Theorem \ref{introthe} is thus in accordance with the fact that for
special cases $\mathcal{B}(\Phi,\Lambda)$ computes the volume of the moduli spaces
${\mathcal M}(G,g,v)$.  These spaces can be described as symplectic reduction at $v$ of the Jeffrey-Kirwan extended moduli space $M(G,g)$, so that
their volume at $v$ is given by  the push-forward of the Liouville measure on $M(G,g)$, a piecewise polynomial function periodic with respect to a
lattice $\Lambda$.
Recall that Jeffrey-Kirwan (\cite{jef-kir}) proved  wall crossing formulae  for   integrals on moduli spaces $\mathcal{M}(G,g,v)$,
and used them in a fundamental way to compute intersection pairings on  $\mathcal{M}(G,g,v)$  when $G=SU(n)$.
However, in the general situation that we are considering here, we do not have such a geometric
interpretation of the multiple Bernoulli series.

\medskip

In Section \ref{section:affine} we generalize the above results to the case of affine arrangements.

\medskip

In Section \ref{section:decomp}, we  give a decomposition formula for
$\mathcal{B}(\Phi,\Lambda)$, describing it as a superposition of `basic
pieces'  made of convolution products of lower dimensional Bernoulli series and multisplines.
More precisely,  we say that $\s$ is an admissible subspace of $V$ if $\s$ is spanned by some elements of $\Phi$, and we say that $\a$ is affine
admissible, if $\a$ is a translate by $\Lambda$ of an admissible subspace.
Given a tope $\tau$, we express the difference between the {\bf piecewise polynomial} density $\mathcal{B}(\Phi,\Lambda)$ and the {\bf polynomial}
density $\Ber(\Phi, \Lambda,\tau)$ as a sum of distributions $\mathcal A(\Phi,\Lambda,\a,\beta)$ associated to proper affine admissible subspaces
$\a$ and the choice of an element $\beta\in \tau$.
The supports of these distributions  do not intersect $\tau$ and are convolution products of polynomial distributions supported on $\a$ with
multisplines distributions directed towards the exterior of $\tau$.  Our construction is inspired by the stratification of a Hamiltonian manifold $M$
using the square of the moment map as Morse function, and we will use a scalar product on $V$.
Our decomposition formula is  very similar to Paradan's decomposition of the equivariant index  of a twisted Dirac operator on  $M$ \cite{par3}.
In \cite{sze-ve2}, Paradan's decomposition was proved by combinatorial methods, and used to give a proof that quantization commutes with
reduction for compact Hamiltonian manifolds. We follow here very closely the line of approach of \cite{sze-ve2}.  However our superposition  is an
infinite (but locally finite) superposition. This is in accordance with the fact that for some special cases,   our distributions are related to
Liouville measures of noncompact Hamiltonian manifolds such as $M(G,g)$ with infinite number of critical components for the square of the moment map.

For example, the periodic polynomial $-B(2,t-[t])/2$ in Figure~\ref{spline}(a) is decomposed in Figure~\ref{spline}(b) as a superposition of a polynomial
density with an infinite number of spline functions.

\begin{figure}
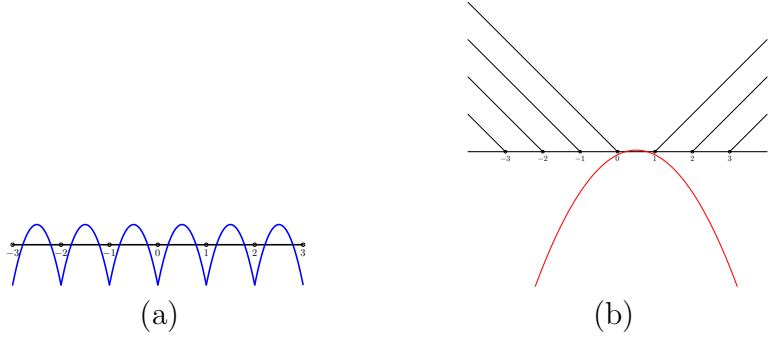

\begin{center}
\includegraphics[width=40mm]{spline2.mps}\hspace{2cm}
 \includegraphics[width=40mm]{spline1.mps}\\

  (a)\hspace{5.5cm}(b)
 \caption{The decomposition of $\mathcal{B}(\Phi_2,\Lambda)(t\omega)$.}\label{spline}
 \end{center}
\end{figure}

\subsection*{Acknowledgements} We thank Michel Duflo for various suggestions.
The first author wishes to thank \'{E}cole Polytechnique (Palaiseau, France), and the second author wishes to thank Bo\={g}azi\c{c}i \"{U}niversitesi
(Turkey) for financial support of research visits. The first author is partially supported by Bo\={g}azi\c{c}i \"{U}niversitesi (B.U.)
Research Fund $5076.$

\section*{List of Notations}
\[\begin{array}{ll}
V_{\mathbb Q} &\mbox{r-dimensional vector space over ${\mathbb Q}$.}\\
V &\mbox{the r-dimensional real vector space $V_{\mathbb Q}\bigotimes_{\mathbb Q}{\mathbb R}$; $v \in V$.}\\
U &\mbox{the dual of $V$; $x\in U$.}\\
\ll \;,\; \rr &\mbox{the pairing between $U$ and $V$.}\\
\Gamma &\mbox{a lattice in $U$; $\gamma\in \Gamma$.}\\
\Lambda:=\Gamma^* &\mbox{dual lattice in $V$; $\ll \Gamma,\Lambda\rr\subset \Z$, $\lambda\in \Lambda$.}\\
\Phi &\mbox{a sequence of  vectors in $V_\Q$; $\phi\in \Phi$.}\\
\mathcal{B}(\Phi, \Lambda) &\mbox{multiple Bernoulli series associated to $\Phi$ and $\Lambda$ (equation \ref{mbseriess}).}\\
B(k,t) &\mbox{$k^{th}$ Bernoulli polynomial.}\\
\mathcal{T}(\Phi, \Lambda) &\mbox{the set of topes associated to the system $(\Phi,\Lambda)$; $\tau$  a tope.}\\
T(X) &\mbox{multivariate spline distribution defined for a set of vectors $X$ in $V$.}\\
\mathcal{R} &\mbox{set of $\Phi$-admissible subspaces of $V$.}\\
W &\mbox{$\Phi$-admissible hyperplane.}\\
H_{\phi} &\mbox{hyperplane in $U$ comprising of vectors $u$ satisfying $\ll u,\phi \rr=0$.}\\
\mathcal{H}=\mathcal{H}(\Phi) &\mbox{hyperplane arrangement associated to $\Phi$.}\\
\mathcal{R}_{\mathcal{H}} &\mbox{ring of rational functions on $U$ with poles along $\mathcal{H}$.}\\
\mathcal{G}_{\mathcal{H}} &\mbox{a subspace of $\mathcal{R}_{\mathcal{H}}$ defined in  \ref{defgphi}.}\\
\end{array}\]

\section{Multiple Bernoulli series and hyperplane arrangements}\label{sec:defber}

Let $V_{\mathbb Q}$ be an r-dimensional vector space over ${\mathbb Q}$, and
let $V$ be the real vector space $V_{\mathbb Q}\bigotimes_{\mathbb Q}{\mathbb R}$.  Let $U$
denote the dual vector space to $V$.  Let  $\Lambda$ be a lattice in $V$ contained in $V_{\mathbb Q}$ and $\Gamma\subset
U$ be the dual lattice to $\Lambda$ so that $\ll
\Gamma,\Lambda\rr\subset \Z$.  For a subset $S$ of $V$, we denote by $<S>$ the subspace of $V$
generated by $S$.

\medskip

Let $Z$ denote the fundamental domain in $V$ for $\Lambda$.
Let $d_{\Lambda}v$ be the Lebesgue measure on $V$ giving  measure $1$ to $Z$. Our main
object of study is certain piecewise polynomial densities on $V$. For
our purposes it will be convenient to use the language of distributions.  If  $f$ is a
locally $L^1$ function on $V$, or more generally a generalized function, then  $f(v)d_{\Lambda}v$ is a distribution on $V$.
We use the notation $f(v)d_{\Lambda}v$, although the value of $f$ at the point $v\in V$ has
usually no meaning.
\medskip

For $v_0\in V$, the  translation ${\rm t}(v_0)$ acts on
distributions on $V$. If $D=f(v) d_{\Lambda}v$, then ${\rm
t}(v_0)D=f(v+v_0) d_{\Lambda}v$. We identify a distribution $D$ on
$V$ periodic with respect to $\Lambda$ (that is ${\rm
t}(\lambda)D=D$ for any $\lambda\in \Lambda$) to a distribution $D$
on the torus $V/\Lambda$.

\medskip

We will say that a locally $L^1$ function $f$ is \textit{piecewise polynomial}, if there exists a decomposition of $V$ in a union of
polyhedral pieces $C_i$ such that  the restriction of $f$ to $C_i$
is given by a polynomial formula.  We then say that the distribution
$f(v)d_{\Lambda}v$ is piecewise polynomial.

If $v \in V$, we  denote by $\delta_v$ the $\delta$ distribution at $v$:
$\langle \delta_v,f\rangle =f(v)$.
The Poisson formula reads as the following equality of distributions

\begin{equation}\label{Poisson}
\sum_{\lambda\in \Lambda} \delta_\lambda=\sum_{\gamma\in \Gamma}e^{2i\pi\langle \gamma,v\rangle } d_{\Lambda}v.
\end{equation}

\medskip
We now introduce the main object of study of this article.

\medskip

Let $\Phi$ be a sequence of vectors in $V_{\mathbb Q}$. Let
$$U_{\reg}(\Phi)=\{u \in U |\; \ll \phi,u \rr \neq
0,\;\mbox{for all} \;\phi\in \Phi\}.$$ We will denote $\Gamma \cap U_{\reg}(\Phi)$ by $\Gamma_{\reg}(\Phi)$.
Consider the distribution $\mathcal{B}(\Phi,\Lambda)$ on $V/\Lambda$
defined via its Fourier coefficients:

\begin{equation}\label{mbseries}
\int_ Z \mathcal{B}(\Phi, \Lambda)(v) e^{-\ll 2i\pi v , \gamma \rr} =
\begin{cases} \frac{1}{\prod_{\phi \in \Phi}\ll 2i\pi \phi,\gamma \rr} & \text{if $\gamma \in \Gamma_{\reg}(\Phi)$,}\\
0 &\text{otherwise.}
\end{cases}
\end{equation}
We then have
\begin{equation}\label{mbseriess} \mathcal{B}(\Phi, \Lambda)=\sum_{\gamma \in \Gamma_{\reg}(\Phi)}  \frac{e^{\ll 2i\pi v ,\gamma \rr}}{
\prod_{\phi \in \Phi} \ll 2i\pi \phi,\gamma \rr} d_{\Lambda}v.
\end{equation}

The above sum, if not absolutely convergent, is defined as a distribution.
We call $\mathcal{B}(\Phi, \Lambda)$ the {\it multiple Bernoulli series} associated to $\Phi$ and $\Lambda$.
Clearly, it does not depend on the order of the elements $\phi$ in the
sequence $\Phi$ (it only depends on  $\Phi$ as a multiset).

\begin{remark}
The formula (\ref{mbseries}) for the Fourier coefficients of
$\mathcal{B}(\Phi,\Lambda)$ is  very similar to the
formula for the Fourier transform of the multispline distribution $T(\Phi)$ (defined in (\ref{multispline1})) on $V$:
if $\Phi$ spans a pointed cone, then the Fourier transform of $T(\Phi)$ is a generalized function on $U$
satisfying
\[\int_ V T(\Phi)(v) e^{-\ll 2i\pi v , x \rr} =\frac{1}{\prod_{\phi \in \Phi}\ll 2i\pi \phi,x\rr}
\]
on the open set  of $U$ given by $\prod_{\phi \in \Phi}\ll  \phi,x\rr \neq 0$.
\end{remark}

\medskip

We now list some properties of the distribution $\mathcal{B}(\Phi,
\Lambda)$:

$\bullet$ If $\Phi$ is the empty set, then
$$\mathcal{B}(\Phi,\Lambda)=\sum_{\gamma\in \Gamma}e^{2i\pi\langle \gamma,v\rangle } d_{\Lambda}v=\sum_{\lambda\in \Lambda} \delta_\lambda$$
is the $\delta$-distribution of the lattice $\Lambda$.

$\bullet$ If $\Phi$ contains the zero vector, then
$\mathcal{B}(\Phi,\Lambda)$ is identically equal to zero.

$\bullet$ $\mathcal{B}(\Phi,\Lambda)$ is periodic with respect to
$\Lambda.$

$\bullet$\label{aver} Let $\Lambda_1\subset \Lambda_2$. Then
$\mathcal{B}(\Phi,\Lambda_2)$ is obtained from
$\mathcal{B}(\Phi,\Lambda_1)$ by averaging over
$\Lambda_2/\Lambda_1$:
\begin{equation}\label{E:aver}
\mathcal{B}(\Phi,\Lambda_2)=
\sum_{\lambda_2\in \Lambda_2/\Lambda_1}
{\rm t}(\lambda_2)\mathcal{B}(\Phi,\Lambda_1).
\end{equation}
The above relation follows from the fact that, if $\gamma\in \Gamma_1\setminus \Gamma_2$,
then
$\sum_{\lambda_2\in \Lambda_2/\Lambda_1} e^{2i\pi\ll\gamma,\lambda_2\rr}=0.$

$\bullet$ The distribution $\mathcal{B}(\Phi,\Lambda)$ is supported
on $<\Phi>+\Lambda$.

Indeed, it is immediate to verify that  $(1-e^{\ll 2i\pi v ,\nu \rr
})\mathcal{B}(\Phi, \Lambda)=0$ for all $\nu\in \Gamma\cap
<\Phi>^{\perp}$.

$\bullet$ If $\Phi$ generates $V$, then  $\mathcal{B}(\Phi,\Lambda)$
is piecewise polynomial. We will give a  proof of  this property in
Section \ref{section:poly}.

\medskip

When the lattice $\Lambda$ is fixed, we often use the measure $d_\Lambda v$ to identify
distributions and generalized functions.

\begin{example}\label{eq:bernoulli}
Let $\Lambda=\Z \omega$, and let $\Phi_k=[\omega,\omega,\ldots,
\omega]$ where  $\omega$ is repeated $k$ times. If $k=0$, then
$\mathcal{B}(\Phi_k,\Lambda)(t\omega)= \sum_{n\in \Z}e^{2i\pi nt}dt$ is  the
$\delta$-distribution of the lattice $\Lambda$ by Poisson formula. If $k>0$, then
$$\mathcal{B}(\Phi_k,\Lambda)(t\omega)=\sum_{n\neq 0} \frac{e^{2i\pi nt}}{
(2i\pi n)^k}dt=-\frac{1}{k!} B(k,t-[t])dt,$$ where $B(k,t)$ denotes
the $k^{\text{th}}$ Bernoulli polynomial in variable $t$ and $[t]$ is the integer part of $t$. (Our
normalization for the Bernoulli polynomial is that of Maple).  In particular, for $k=1$, we have
$\mathcal{B}(\Phi_1,\Lambda)(t\omega)dt= (\frac{1}{2}-t+[t])dt$ (see
Figure \ref{berno}).

\begin{figure}
\begin{center}
  \includegraphics[width=93mm]{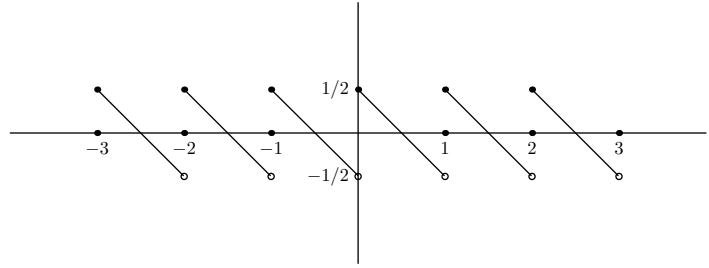}\\
  \caption{Graph of the function  $\mathcal{B}(\Phi_1,\Lambda)(t\omega)= (\frac{1}{2}-t+[t]$)}\label{berno}
\end{center}
\end{figure}

If $k>1$, the above series is absolutely convergent and
$\mathcal{B}(\Phi_k,\Lambda)(t\omega )$ is  given by integration
against a continuous function on $\R$.
\end{example}

\begin{example}\label{bernoulli2}
Let $V=\R e_1\oplus \R e_2$ with lattice $\Lambda=\Z e_1\oplus \Z
e_2$. Let $\Phi=[e_1,e_2,e_1+e_2]$. We write $v\in V$ as $v=v_1
e_1+v_2 e_2$.

We compute the  series
$\mathcal{B}(\Phi,\Lambda)=\mathcal{B}(v_1,v_2)dv_1dv_2$ where
\[\begin{array}{ll}
\mathcal{B}(v_1,v_2)&=\displaystyle \sum_{\substack{n_1\in \Z,\, n_2\in\Z\\ n_1\neq 0,\, n_2\neq 0,\, n_1+n_2\neq 0}}
\frac{e^{2i\pi(n_1v_1+n_2v_2)}}{(2i\pi n_1)(2i\pi
n_2)(2i\pi(n_1+n_2))}.
\end{array}\]

The distribution  $\mathcal{B}(\Phi,\Lambda)$ is piecewise polynomial
and periodic with respect to $\Lambda=\Z e_1+\Z e_2$.  It is thus sufficient
to write the formulae of $\mathcal{B}(v_1,v_2)$ for $0<v_1<1$ and $0<v_2<1$ (see Figure \ref{aa2}).

\begin{figure}
\begin{center}
 \includegraphics[width=37mm]{tope2mod.mps}\\
 \caption{$\CT([e_1,e_2,e_1+e_2],\Z e_1 \oplus \Z e_2)$}\label{aa2}
 \end{center}
\end{figure}

\[\mathcal{B}(v_1,v_2)=\left\{\begin{array}{cl}
   -\frac{1}{6}(1+v_1-2v_2)(v_1-1+v_2)(2v_1-v_2) , & \text{if}\;v_1< v_2\\
   -\frac{1}{6}(v_1-2v_2)(v_1-1+v_2)(2v_1-1-v_2),   & \text{if}\;v_1> v_2.
       \end{array}\right.\]

We remark that $3\mathcal{B}(v_1,v_2)$ is the symplectic volume of the moduli space of flat $\text{SU}(3)$-connections
on a topological torus with one marked point $v=v_1H_{\alpha_1}+v_2H_{\alpha_2}$ where $H_{\alpha_1}$ and $H_{\alpha_2}$
denote coroots associated to simple roots $\{\alpha_1,\alpha_2\}$ of $\text{SU}(3)$.
\end{example}

\begin{example}\label{exampB2}
Let $V=\R e_1\oplus \R e_2$ with lattice $\Lambda=\Z e_1\oplus \Z
e_2$. Let $\Phi=[e_1,e_2,e_1+e_2,e_1-e_2]$. We write $v\in V$ as $v=v_1
e_1+v_2 e_2$. We compute the  series
$\mathcal{B}(\Phi,\Lambda)=\mathcal{B}(v_1,v_2)dv_1dv_2$ where
 \[\begin{array}{ll}
\mathcal{B}(v_1,v_2)&=\displaystyle \sum_{\substack{n_1\in \Z,\, n_2\in\Z\\ n_1\neq 0,\, n_2\neq 0,\, n_1+n_2\neq 0,\,n_1-n_2\neq 0}}
\frac{e^{2i\pi(n_1v_1+n_2v_2)}}{(2i\pi n_1)(2i\pi
n_2)(2i\pi(n_1+n_2))(2i\pi(n_1-n_2))}.
\end{array}\]
In the region $v_1-v_2<1, v_2<0, v_1+v_2>0$, we get
$$B(v_1,v_2)=\frac{1}{8}v_2(2v_1-1)(v_1-1-v_2)(v_1+v_2).$$
In the region $v_1>v_2, v_2>0, v_1+v_2<1$, we get
$$B(v_1,v_2)=\frac{1}{8}v_2(2v_1-1)(v_1-1+v_2)(v_1-v_2).$$
Similar computation in the region $v_1>v_2, v_2>0, v_1+v_2<1$ gives
$$B(v_1,v_2)=\frac{1}{8}v_1(2v_2-1)(v_1-1+v_2)(v_1-v_2).$$
\end{example}

\section{Recurrence relations}\label{section:recurrence}

For an element $\phi$ in $\Phi$, we associate two lists
of vectors as follows:\\

$\bullet$ We consider the list $\Phi-\{\phi\}$ in $V$ and the corresponding distribution
$\mathcal{B}(\Phi-\{\phi\}, \Lambda)$ on $V$.

$\bullet$ Consider the vector space $V_0:=V/<\phi>$, let $p$ denote the projection $V\to V_0$.
We denote the image under $p$ of the lattice $\Lambda$ in $V_0$ by $\Lambda_0$.
The dual space $U_0$ of the vector space  $V_0$ is the hyperplane $H_\phi$. The
dual lattice to $\Lambda_0$ is  the lattice $\Gamma_0=\{\gamma\in
\Gamma\,|\, \langle \gamma,\phi\rangle =0\}$.
Consider the list $\Phi_0$  of elements of $V_0$ consisting of the images  of the elements in
$\Phi-\{\phi\}$.   Observe that if $\Phi$ contains $\phi$ with multiplicity greater than $1$, then
$\Phi_0$ contains the zero vector and consequently $\mathcal{B}(\Phi_0,\Lambda_0)$ is identically zero.

If $D$ is a  distribution  on $V_0$, we denote by $p^*D$ the
distribution on $V$ ``constant in the direction $\phi$": if
$D=b(v_0) d_{\Lambda_0}v_0$, then we define  $p^*D=b(p(v))
d_{\Lambda}v$. Thus $p^*\mathcal{B}(\Phi_0,\Lambda_0)$ is a
distribution on $V$.  We remark that, for any $\phi \in \Phi$, we have  the following equality of
sets
\begin{equation}\label{eq:sets}
\Gamma_{\reg}(\Phi-\{\phi\})=
(\Gamma_{\reg}(\Phi-\{\phi\})\cap \{\phi=0\})\cup \Gamma_{\reg}(\Phi)
\end{equation}
where the union is disjoint.

\bigskip
The main remark  of this section is the following  recurrence
relation for the distribution  $\mathcal{B}(\Phi,\Lambda)$.

\begin{proposition}
Let $\phi\in \Phi$.  Then  we have
\begin{equation}\label{induction}
\partial_\phi \mathcal{B}(\Phi,\Lambda)=\mathcal{B}(\Phi- \{\phi\},\Lambda)-p^*\mathcal{B}(\Phi_0,\Lambda_0).
\end{equation}
\end{proposition}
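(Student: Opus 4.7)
The plan is to verify the identity of distributions on the torus $V/\Lambda$ by matching Fourier coefficients at each $\gamma\in\Gamma$. Since Fourier coefficients of $\partial_\phi D$ are obtained from those of $D$ by multiplication by $\ll 2i\pi\phi,\gamma\rr$, the defining formula (\ref{mbseries}) for $\mathcal{B}(\Phi,\Lambda)$ immediately shows that the $\gamma$-th Fourier coefficient of $\partial_\phi\mathcal{B}(\Phi,\Lambda)$ equals $\frac{1}{\prod_{\phi'\in\Phi-\{\phi\}}\ll 2i\pi\phi',\gamma\rr}$ when $\gamma\in\Gamma_{\reg}(\Phi)$, and vanishes otherwise. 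Equivalently, working at the level of the Fourier expansion (\ref{mbseriess}), term-by-term differentiation brings down the factor $\ll 2i\pi\phi,\gamma\rr$ which cancels the matching factor in the denominator.

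Next I would analyze the Fourier expansion of $\mathcal{B}(\Phi-\{\phi\},\Lambda)$: its $\gamma$-th coefficient is the same rational expression $\frac{1}{\prod_{\phi'\in\Phi-\{\phi\}}\ll 2i\pi\phi',\gamma\rr}$, but now over the larger index set $\Gamma_{\reg}(\Phi-\{\phi\})$. Using the disjoint decomposition (\ref{eq:sets}), I split this sum into a piece indexed by $\Gamma_{\reg}(\Phi)$ — which by the previous paragraph coincides with $\partial_\phi\mathcal{B}(\Phi,\Lambda)$ — and a piece indexed by those $\gamma\in\Gamma_{\reg}(\Phi-\{\phi\})$ with $\ll\phi,\gamma\rr=0$. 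Thus the identity reduces to the claim that this second piece equals $p^*\mathcal{B}(\Phi_0,\Lambda_0)$.

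For the final identification, observe that the lattice $\Gamma_0=\{\gamma\in\Gamma\mid \ll\phi,\gamma\rr=0\}$ is precisely the dual lattice of $\Lambda_0$ in $U_0=H_\phi$. For any $\gamma\in\Gamma_0$ and any $\phi'\in\Phi-\{\phi\}$, one has $\ll\phi',\gamma\rr=\ll p(\phi'),\gamma\rr$, and likewise $\ll v,\gamma\rr=\ll p(v),\gamma\rr$. Consequently, the condition $\gamma\in\Gamma_{\reg}(\Phi-\{\phi\})\cap H_\phi$ is equivalent to $\gamma\in\Gamma_{\reg}(\Phi_0)$ (viewed in $\Gamma_0$), and under this identification the sum rewrites as $\sum_{\gamma\in\Gamma_{\reg}(\Phi_0)}\frac{e^{\ll 2i\pi p(v),\gamma\rr}}{\prod_{\phi'_0\in\Phi_0}\ll 2i\pi\phi'_0,\gamma\rr}d_\Lambda v$, which is exactly the pullback $p^*\mathcal{B}(\Phi_0,\Lambda_0)$ by construction. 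The degenerate case where $0\in\Phi_0$ (which occurs if $\Phi$ contains $\phi$ with multiplicity $>1$, or a nonzero scalar multiple of $\phi$) is automatic: any such offending $\phi'$ satisfies $\ll\phi',\gamma\rr=0$ for every $\gamma\in H_\phi$, so the sum over $\Gamma_{\reg}(\Phi-\{\phi\})\cap H_\phi$ is empty, in agreement with $\mathcal{B}(\Phi_0,\Lambda_0)=0$.

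The main obstacle is purely bookkeeping: one must carefully check that the pullback operation $p^*$ acts at the level of Fourier series by supporting Fourier coefficients on $\Gamma_0$ and replacing $e^{\ll 2i\pi v_0,\gamma\rr}d_{\Lambda_0}v_0$ by $e^{\ll 2i\pi v,\gamma\rr}d_\Lambda v$, and that the edge case $0\in\Phi_0$ is consistently handled. Modulo these verifications, the proof is a direct comparison of Fourier coefficients.
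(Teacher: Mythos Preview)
Your proposal is correct and follows essentially the same approach as the paper: differentiate the Fourier expansion term by term, invoke the disjoint decomposition (\ref{eq:sets}) to split the sum over $\Gamma_{\reg}(\Phi-\{\phi\})$, and identify the piece supported on $\{\ll\phi,\gamma\rr=0\}$ with $p^*\mathcal{B}(\Phi_0,\Lambda_0)$. You are more explicit than the paper about the dual-lattice identification $\Gamma_0\subset H_\phi$ and the degenerate case $0\in\Phi_0$, but the argument is the same.
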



\begin{proof}

We fix the measures $d_\Lambda v$ and $d_{\Lambda_0} v$ and we
identify $\mathcal{B}(\Phi,\Lambda)$ and
$\mathcal{B}(\Phi_0,\Lambda_0)$ to generalized functions. Differentiating $\mathcal{B}(\Phi,\Lambda)$ in the sense of
generalized functions, we get
\[\partial_{\phi}\mathcal{B}(\Phi,\Lambda)(v)
=\sum_{\gamma \in \Gamma_{\reg}(\Phi)}  \frac{e^{\ll 2i\pi v ,\gamma
\rr}}{ \prod_{\phi' \in \Phi-\{\phi\}} \ll 2i\pi \phi',\gamma
\rr}\]
\[=\sum_{\gamma \in \Gamma_{\reg}(\Phi-\{\phi\})}
\frac{e^{\ll 2i\pi v ,\gamma \rr}}{\prod_{\phi' \in
\Phi-\{\phi\}} \ll 2i\pi \phi',\gamma \rr} - \sum_{ \gamma \in \Gamma_{\reg}(\Phi-\{\phi\}), \ll
\gamma,\phi\rr=0} \frac{e^{\ll 2i\pi v ,\gamma \rr}}{\prod_{\phi'
\in \Phi-\{\phi\}} \ll 2i\pi \phi',\gamma \rr}.\]

The last term  is constant on the line $v+\R \phi$ and identifies
with $p^*\mathcal{B}(\Phi_0,\Lambda_0)$.
\end{proof}

\section{Hyperplane arrangements and generalized series}\label{sub:hyper}

We generalize the setting of Bernoulli series.

Here we assume that the list $\Phi$ in $V_{\mathbb Q}$ does not contain the zero vector.  Then each $\phi$ in $\Phi$ determines an hyperplane
$H_\phi=\{u \in U: \ll u,\phi\rr=0\}$ in $U$.  Let $$\CH(\Phi)=\{H_\phi, \;\phi \in\Phi\}$$ be the set of hyperplanes determined by $\Phi$.
We denote the closed subset $\cup_{\phi \in \Phi} H_\phi$ of $U$ by the same notation  $\CH(\Phi)$.
When $\Phi$ is fixed, we denote $\CH(\Phi)$ simply by $\CH$, and its complement in $U$ by $U_\CH$.

We denote by $S(V)$ the symmetric algebra of $V$ and identify it with the ring of polynomial functions on $U$.
We denote by $R_{\mathcal H}$ the ring of rational functions on $U$ regular on $U_{\mathcal{H}}$, that is, the ring generated by the ring
$S(V)$ of polynomial functions on $U$ together with inverses of linear forms $\phi \in \Phi$.

The set $\Gamma_{\reg}(\Phi)$ depends only on $\mathcal H$, thus, we shall also denote it by $\Gamma_{\reg}(\mathcal H)$.
A function $g\in R_{\mathcal H}$ is well defined at $\gamma\in \Gamma_{\reg}(\mathcal H)$.

\begin{definition}
If $g\in R_{\mathcal H}$, we define the distribution $\mathcal{B}(\mathcal H, \Lambda,g)$ on $V$ by
$$\mathcal{B}(\mathcal H, \Lambda,g)=\sum_{\gamma \in \Gamma_{\reg}(\mathcal H)} g(\gamma) e^{ 2i\pi \ll v ,\gamma \rr} d_\Lambda v.$$
\end{definition}

It is easy to see that the above series converges  in the space of distributions on $V$.
The Bernoulli series $\mathcal B(\Lambda,\Phi)$ is the special case of $\mathcal B(\mathcal H,\Lambda,g)$ with
$g=\frac{1}{\prod_{\phi\in \Phi}\phi}.$

\begin{example}\label{ex:deltaone}
Let $\Lambda=\Z \omega$,  let $\CH=\{0\}$, and $g=1$.
Then    $$\mathcal{B}(\mathcal H, \Lambda,g)=\sum_{n\neq 0}e^{2i\pi nt}=-1+\delta_{\Lambda}$$

\end{example}
\medskip

Observe that if $\Lambda_1\subset \Lambda_2$, then $\mathcal{B}(\mathcal H,\Lambda_2,g)$ is obtained from
$\mathcal{B}(\mathcal H,\Lambda_1,g)$ by averaging over $\Lambda_2/\Lambda_1$:
\begin{equation}\label{E:aver2}
\mathcal{B}(\mathcal H,\Lambda_2,g)= \sum_{\lambda_2\in \Lambda_2/\Lambda_1}
{\rm t}(\lambda_2)\mathcal{B}(\mathcal H,\Lambda_1,g).
\end{equation}

\bigskip

Let $\phi\in \Phi$, then we can associate to $\phi$ the following two arrangements:

$\bullet$  $\mathcal H'=\mathcal H\setminus H_\phi$.

$\bullet$ $\mathcal H_0=\{H\cap H_\phi, H\in \mathcal H'\}$, the trace of the arrangement $\mathcal H'$ on $H_\phi$.

\bigskip

Clearly a function $g$ in $R_{\mathcal H'}$ restricts to the hyperplane $H_\phi$ in a rational function $g_0$ lying in
$R_{\mathcal H_0}$.  Thus
$\mathcal{B}(\mathcal H_0,\Lambda_0,g_0)$ is a distribution on
$H_\phi^*=V/\R \phi$.

We have the following recurrence relation for the distribution
$\mathcal{B}(\mathcal H,\Lambda,g)$ associated to an element $g\in
R_{\mathcal H'}$.

\begin{proposition}\label{prop:recug}
If $g\in R_{\mathcal H'}$, then
$$\mathcal{B}(\mathcal H,\Lambda,g)=\mathcal{B}(\mathcal H',\Lambda,g)-p^*
\mathcal{B}(\mathcal H_0,\Lambda_0,g_0).$$
\end{proposition}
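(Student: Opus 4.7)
The plan is to mimic the proof of the earlier recurrence \eqref{induction}, replacing the differentiation trick by a direct splitting of the defining series. The single combinatorial input needed is the disjoint decomposition
\[
\Gamma_{\reg}(\mathcal H') \;=\; \Gamma_{\reg}(\mathcal H) \,\sqcup\, \bigl(\Gamma_{\reg}(\mathcal H')\cap H_\phi\bigr),
\]
which is the analogue of \eqref{eq:sets}: a regular $\gamma$ for $\mathcal H'$ is either already regular for $\mathcal H$ (the $H_\phi$ constraint also holds), or it lies on $H_\phi$ but avoids every other hyperplane of $\mathcal H'$.

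First I would start from the definition
\[
\mathcal{B}(\mathcal H',\Lambda,g) \;=\; \sum_{\gamma \in \Gamma_{\reg}(\mathcal H')} g(\gamma)\, e^{2i\pi \ll v,\gamma\rr}\, d_\Lambda v
\]
(noting that $g\in R_{\mathcal H'}$ is evaluated at every $\gamma\in \Gamma_{\reg}(\mathcal H')$, which is meaningful since such $\gamma$ avoid every hyperplane of $\mathcal H'$), and split the sum along the above decomposition. The sum over $\Gamma_{\reg}(\mathcal H)$ is by definition $\mathcal{B}(\mathcal H,\Lambda,g)$, so the proposition reduces to identifying the complementary piece with $p^*\mathcal{B}(\mathcal H_0,\Lambda_0,g_0)$.

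For this second step I would check three points. (a) Under the natural identification $H_\phi = U_0$, the lattice $\Gamma_{\reg}(\mathcal H')\cap H_\phi$ coincides with the regular set $\Gamma_{0,\reg}(\mathcal H_0)$: any $\gamma \in \Gamma\cap H_\phi$ lies in a hyperplane $H\in\mathcal H'$ iff it lies in $H\cap H_\phi \in \mathcal H_0$, because $H\cap H_\phi$ is a proper (codimension one) subspace of $H_\phi$ for $H\in\mathcal H'$. (b) The restriction $g_0$ of $g$ to $H_\phi$ is exactly the value $g(\gamma)$ used in the sum. (c) For $\gamma\in H_\phi$ we have $\ll v,\gamma\rr = \ll p(v),\gamma\rr$, so the exponential is constant in the direction $\phi$; combined with the convention $p^*(b(v_0)d_{\Lambda_0}v_0)=b(p(v))d_\Lambda v$, the second piece of the split series is precisely the pullback distribution $p^*\mathcal{B}(\mathcal H_0,\Lambda_0,g_0)$.

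The only point that demands any care is the regularity identification in (a); everything else is bookkeeping. Once that is in place, rearranging the two resulting identities yields $\mathcal{B}(\mathcal H,\Lambda,g)=\mathcal{B}(\mathcal H',\Lambda,g)-p^*\mathcal{B}(\mathcal H_0,\Lambda_0,g_0)$, which is the claim. Convergence in the distributional sense is not an issue, since each of the three series was already shown to converge in the space of distributions on $V$ (or on $V_0$, then pulled back).
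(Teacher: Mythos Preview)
Your argument is correct and follows exactly the same approach as the paper: the paper's proof simply invokes the set-theoretic decomposition \eqref{eq:sets} and observes that the complement $\Gamma_{\reg}(\mathcal H')\setminus \Gamma_{\reg}(\mathcal H)$ identifies with $\Gamma_{\reg}(\mathcal H_0)$, leaving the remaining bookkeeping (your points (a)--(c)) to the reader. You have spelled out those details carefully, but the structure of the argument is identical.
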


\begin{proof}

From the equality (\ref{eq:sets}), we see that
the elements of $\Gamma_{\reg}(\CH')$ that are not in $\Gamma_{\reg}(\CH)$
can be identified with the elements of $\Gamma_{\reg}(\CH_0)$.
\end{proof}

\section{Piecewise polynomial behavior }\label{section:poly}

For completeness we reprove here that the distribution
$\mathcal{B}(\Phi,\Lambda)$ is piecewise polynomial when $\Phi$
generates $V$.  In fact, we prove the piecewise polynomial behavior of the series
$\mathcal{B}(\mathcal H,\Lambda,g)$ when $g$ belongs to a particular subset $G_{\mathcal H}$ of
$R_{\mathcal H}$ which we will shortly describe.

Suppose $\Phi$ generates $V$. A subspace of $V$ generated by a subset of elements of $\Phi$ is
called  $\Phi$-admissible.
A $\Phi$-admissible  hyperplane will also be called a {\it wall}.
Let $\CH_{\text{aff}}(\Phi)$ be the set of  $\Phi$-admissible hyperplanes in $V$ together with
their translates with respect to $\Lambda$.
An element $W\in \CH_{\text{aff}}(\Phi)$  will also be called a (affine) wall.
An element $v\in V$  is said to be {\it regular} if $v$ is not on any affine wall.
We denote by  $V_{\reg,\aff}$ the open subset of $V$ consisting of regular elements.
We denote by $\CT(\Phi,\Lambda)$ the set of connected
components of  $V_{\reg,\aff}$.  An element $\tau$ of $\CT(\Phi,\Lambda)$ is called a
{\it tope}.  By definition, topes only depend on $\Lambda$ and the arrangement $\CH(\Phi)$,
and not on $\Phi$ itself; thus we will denote the set of topes indifferently by $\CT(\Phi,\Lambda)$ or
$\CT(\CH,\Lambda)$.

Suppose $ \Lambda_1\subset \Lambda_2$.  Then, topes corresponding to the system $(\Phi,\Lambda_2)$
are obtained by translating topes corresponding to $(\Phi,\Lambda_1)$ by elements of $\Lambda_2$ and taking their nonempty intersections.

\begin{example}
Let $V=\R e_1\oplus \R e_2$, $\Phi=[e_1,e_2]$ and $\Lambda=\Z e_1\oplus \Z \frac{(e_1+e_2)}{2}$.
Let $\Phi=[e_1,e_2]$.
Then, the topes in $\CT(\Phi,\Z e_1\oplus \Z e_2)$ gives a paving of $V$ by squares
(complement of bold black lines in Figure \ref{F:tope}),
and the topes in $\CT(\Phi,\Lambda)$  are obtained by  subdividing the squares into $4$ equal squares.
\end{example}


\begin{figure}
\begin{center}
  \includegraphics[width=37mm]{topes.mps}\\
  \caption{$\CT(\Phi,\Z e_1\oplus \Z e_2)$ versus $\CT(\Phi,\Z e_1\oplus \Z \frac{(e_1+e_2)}{2})$}\label{F:tope}
\end{center}
\end{figure}

\begin{definition}
A function $f$ on $V_{\reg,\aff}$ is called \textit{piecewise polynomial with
respect to $\CH_{\aff}(\Phi)$ and $\Lambda $} if $f$ coincide with a polynomial
function $f^{\tau}$ on each tope $\tau$ in $\CT(\Phi,\Lambda)$.
\end{definition}

A distribution $D$ is called piecewise polynomial with
respect to $\CH(\Phi)$ and $\Lambda $ (in short
$(\CH,\Lambda)$, or equivalently $(\Phi,\Lambda)$) if it is given by integration on $V_{\reg,\aff}$ by a
piecewise polynomial function. The space of piecewise polynomial distributions with
respect to $(\CH,\Lambda)$ is invariant under translation by
$\Lambda$. More generally, if $\Lambda_1\subset \Lambda_2$, and $D$
is piecewise polynomial with respect to $(\CH,\Lambda_1)$, then ${\rm
t}(\lambda_2)D$ is piecewise polynomial with respect to
$(\mathcal H,\Lambda_2)$ for any $\lambda_2\in \Lambda_2$.

The condition for   a  distribution   $b$  to be piecewise polynomial  is {\bf stronger} than the condition  that the restriction of $b$
to any tope  is a polynomial density.  For example the $\delta$ function of the lattice $\Lambda$ restricts to $0$ on any tope $\tau$,
but is not a piecewise polynomial distribution.

\bigskip

Let $\phi\in \Phi$, and consider the two arrangements $\mathcal H'$ and $\mathcal H_0$ associated to $\phi$ as in the
previous section.  If $f$ is piecewise polynomial for $(\mathcal H',\Lambda)$, then $f$ is piecewise polynomial for $(\mathcal H,\Lambda)$.
If $f_0$ is piecewise polynomial for $(\mathcal H_0,\Lambda_0)$, then $p^*f$ is piecewise polynomial for $(\mathcal H,\Lambda)$.

\bigskip

We now prove that $\mathcal{B}(\mathcal H,\Lambda,g)$ is piecewise polynomial with respect
to $(\mathcal H,\Lambda)$ when $\Phi$ generates $V$ and $g$ is in some subspace of $R_{\mathcal H}$ that we describe now.

We may assume that all equations $\phi=0$ of the hyperplanes $H_\phi$ in $\mathcal H$ lie in $\Lambda$, we can always
achieve this by taking an appropriate multiple of $\phi$.  Thus, for what follows, we may assume that elements of $\Phi$  are in fact in $\Lambda$.

We denote by $\mathfrak{B}(\Phi)$ the set of subsets of $r$ linearly
independent elements of $\Phi$.  In other words, an element of $\mathfrak{B}(\Phi)$ is
a basis of $V$ extracted from $\Phi$.

Suppose $L$ is a sequence of elements of $\Phi$ (possibly with multiplicities) generating $V$.
Define $$\theta(L)(x)=\frac{1}{\prod_{\alpha\in L}\langle \alpha,x\rangle},$$
a function in $R_{\mathcal H}$.

Since $\theta(L)$ will change by a scalar multiple when elements of $L$ are scaled,
we may define the following space which depends only on $\CH$.

\begin{definition}\label{defgphi}
Let $G_{\mathcal H}$ be the subspace of $R_{\mathcal H}$ generated by all rational functions of the form $\theta(L)$.
\end{definition}

We recall the following description of $G_{\mathcal H}$.

\begin{lemma}\label{ind}
Any $\theta(L)$ may be written as a linear combination of elements
$\theta(\sigma,{\bf n})=\frac{1}{\alpha_{i_1}^{n_1}\cdots \alpha_{i_r}^{n_r}}$ where
$\sigma:=[\alpha_{i_1},\ldots, \alpha_{i_r}] \in \mathfrak{B}(\Phi)$ is a basis extracted from $\Phi$
and ${\bf {n}}=[n_1,n_2,\ldots,n_r]$ is a sequence of positive integers.
\end{lemma}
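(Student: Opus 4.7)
The plan is to prove this by induction on the number $|\operatorname{supp}(L)|$ of distinct elements appearing in $L$.

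\textbf{Base case.} When $|\operatorname{supp}(L)| = r$, the $r$ distinct elements of $L$ span $V$ and are hence linearly independent, so they form a basis $\sigma \in \mathfrak{B}(\Phi)$. Then $\theta(L) = \theta(\sigma, \mathbf{n})$ where $\mathbf{n} = (n_1,\ldots,n_r)$ is the vector of multiplicities of $L$, which is already in the required form.

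\textbf{Inductive step.} Suppose $|\operatorname{supp}(L)| > r$. Choose a basis $\sigma = \{\alpha_{j_1},\ldots,\alpha_{j_r}\} \subset L$ (possible since $L$ spans $V$) and some $\beta \in \operatorname{supp}(L) \setminus \sigma$. Expanding $\beta$ in this basis, write $\beta = \sum_{i=1}^{r} c_i \alpha_{j_i}$, with at least one $c_i$ nonzero. Multiplying the identity $1 = \sum_i c_i(\alpha_{j_i}/\beta)$ by $\theta(L) = 1/\prod_{\gamma \in L}\gamma$ and cancelling one factor of $\alpha_{j_i}$ against the denominator in each term (using $\alpha_{j_i} \in L$), one obtains the key partial-fraction identity
\[
\theta(L) \;=\; \sum_{i\,:\,c_i \neq 0} c_i\,\theta(L_i), \qquad L_i := L - \{\alpha_{j_i}\} + \{\beta\}.
\]
Each $L_i$ has the same total size as $L$, but the multiplicity of $\beta$ increased by one and that of $\alpha_{j_i}$ decreased by one. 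There are now two subcases for each descendant $L_i$: if $\alpha_{j_i}$ had multiplicity $1$ in $L$, then $|\operatorname{supp}(L_i)| = |\operatorname{supp}(L)| - 1$, and the outer induction hypothesis applies; if $\alpha_{j_i}$ had multiplicity $\geq 2$, then $\operatorname{supp}(L_i) = \operatorname{supp}(L)$, but crucially $\sigma$ is still a basis contained in $L_i$ and $\beta \notin \sigma$, so the same identity can be re-applied to $L_i$ with the same $\sigma$ and $\beta$.

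\textbf{Main obstacle: termination.} The main step is to check that this branching recursion halts. The natural lexicographic measure is
\[
\mu(L) \;:=\; \bigl(|\operatorname{supp}(L)|,\; m_{\alpha_{j_1}}(L) + \cdots + m_{\alpha_{j_r}}(L)\bigr),
\]
where within any subtree of constant support we keep the basis $\sigma$ fixed (so the second coordinate is well defined). In each application of the identity, either the first coordinate strictly drops (the ``good'' case when some $\alpha_{j_i}$ gets exhausted), or the first coordinate is preserved but $m_{\alpha_{j_i}}(L_i) = m_{\alpha_{j_i}}(L) - 1$ with all other basis multiplicities unchanged, so the second coordinate strictly decreases. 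Since the second coordinate is bounded below by $r$, after finitely many steps every branch terminates with a leaf whose support equals a basis, handled by the base case. Assembling the identities along the finite recursion tree writes $\theta(L)$ as the desired linear combination of $\theta(\sigma',\mathbf{n}')$ for $\sigma' \in \mathfrak{B}(\Phi)$.
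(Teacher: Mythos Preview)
Your proof is correct and uses essentially the same partial-fraction identity as the paper: writing an extra vector $\beta$ as $\sum_i c_i\alpha_{j_i}$ in a chosen basis $\sigma\subset L$ and using $1=\sum_i c_i\alpha_{j_i}/\beta$ to trade one factor of $\alpha_{j_i}$ for one factor of $\beta$. The paper organizes the induction slightly differently---it first reduces (``by induction on the number of elements of $L$'') to the special shape $\theta(\sigma,\mathbf{n})\cdot\alpha^{-N}$ and then decreases $|\mathbf{n}|$---whereas you induct on $|\operatorname{supp}(L)|$ and make the termination explicit via the lexicographic measure $(|\operatorname{supp}(L)|,\sum_i m_{\alpha_{j_i}}(L))$; but the underlying mechanism is identical.
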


\begin{proof}
By induction on the number of elements of $L$, we need to prove that the assertion holds for rational fractions of the form
$\theta(\sigma,{\bf n})\frac{1}{\alpha^N}$ with $\sigma=[\alpha_{1},\ldots, \alpha_{r}]$ is a basis of $V$.
We write $\alpha=\sum_{i=1}^rc_i \alpha_i$. Using
the relation,
$$\theta(\sigma,{\bf n})\frac{1}{\alpha^N}=\frac{\alpha}{\alpha_1^{n_1}\cdots \alpha_r^{n_r}}\frac{1}{\alpha^{N+1}}
=\sum_{i, c_i\neq 0} c_i \frac{1}{\alpha_1^{n_1}\cdots
\alpha_i^{n_i-1}\cdots \alpha_r^{n_r}}\frac{1}{\alpha^{N+1}},$$
we decrease the number $|{\bf {n}}|=n_1+\cdots+n_r$. When one of the $n_i$ in the sum becomes $0$,
the corresponding term is of the required form associated to the basis $\sigma_i=\sigma \cup \{\alpha\}\setminus \{\alpha_i\}$.
\end{proof}

\begin{proposition}\label{compute}
If $g\in G_{\mathcal H}$, the
 distribution $$\mathcal{B}(\mathcal H,\Lambda,g)(v)=\sum_{\gamma\in \Gamma_{\reg}(\mathcal H)}g(\gamma) e^{2i\pi\langle \gamma,v\rangle }$$
is a piecewise polynomial distribution (with respect to the system
$(\mathcal H, \Lambda)$).
\end{proposition}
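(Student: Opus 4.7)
The plan is to reduce, via Lemma \ref{ind} and linearity, to the case $g = \theta(\sigma, \mathbf{n})$ where $\sigma = \{\alpha_1, \ldots, \alpha_r\}$ is a basis of $V$ extracted from $\Phi$ and $\mathbf{n} = (n_1,\ldots,n_r)$ is a tuple of positive integers. I will then perform a double induction: an outer induction on $\dim V$ and an inner induction on $|\mathcal{H} \setminus \mathcal{H}_\sigma|$, where $\mathcal{H}_\sigma = \{H_{\alpha_1}, \ldots, H_{\alpha_r}\}$.

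For the base case of the inner induction, when $\mathcal{H} = \mathcal{H}_\sigma$, I rescale so that $\alpha_i \in \Lambda$ and set $\Lambda_1 = \oplus_i \Z \alpha_i \subseteq \Lambda$. Writing $v = \sum t_i \alpha_i$ and using the dual basis $\{\alpha_i^*\}$ for $\Gamma_1 = \Lambda_1^*$, the series for $\mathcal{B}(\mathcal{H}_\sigma, \Lambda_1, g)$ factorizes as
$$\prod_{i=1}^r \Big(\sum_{k_i \in \Z \setminus \{0\}} \frac{e^{2i\pi k_i t_i}}{k_i^{n_i}}\Big) \, dt_1 \cdots dt_r,$$
and by Example \ref{eq:bernoulli} each factor is a scalar multiple of the piecewise polynomial function $B(n_i, t_i - [t_i])$. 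Hence $\mathcal{B}(\mathcal{H}_\sigma, \Lambda_1, g)$ is piecewise polynomial with respect to $(\mathcal{H}_\sigma, \Lambda_1)$. Applying the averaging formula (\ref{E:aver2}) together with the translation-invariance of the piecewise polynomial property yields piecewise polynomiality with respect to $(\mathcal{H}_\sigma, \Lambda)$. Since $\mathcal{H}_\sigma \subseteq \mathcal{H}$, the topes of $(\mathcal{H}, \Lambda)$ refine those of $(\mathcal{H}_\sigma, \Lambda)$, and the conclusion follows.

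For the inductive step, I choose $\phi_0 \in \Phi$ with $H_{\phi_0} \notin \mathcal{H}_\sigma$, so that $\phi_0$ is not proportional to any $\alpha_i$ and all images $\bar\alpha_i$ in $V_0 = V/\R \phi_0$ are nonzero. Since $g = \theta(\sigma, \mathbf{n}) \in R_{\mathcal{H}'}$ where $\mathcal{H}' = \mathcal{H} \setminus \{H_{\phi_0}\}$, Proposition \ref{prop:recug} gives
$$\mathcal{B}(\mathcal{H}, \Lambda, g) = \mathcal{B}(\mathcal{H}', \Lambda, g) - p^* \mathcal{B}(\mathcal{H}_0, \Lambda_0, g_0).$$
The first term has one fewer hyperplane outside $\mathcal{H}_\sigma$, so is piecewise polynomial by the inner induction. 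For the second, the restriction $g_0 = \theta(\bar\sigma, \mathbf{n})$ lies in $G_{\mathcal{H}_0}$ (as $\bar\sigma$ generates $V_0$), so the outer induction delivers piecewise polynomiality of $\mathcal{B}(\mathcal{H}_0, \Lambda_0, g_0)$ on the lower-dimensional space $V_0$.

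The principal technical point I anticipate is checking that the pullback $p^*$ takes piecewise polynomial distributions with respect to $(\mathcal{H}_0, \Lambda_0)$ to piecewise polynomial distributions with respect to $(\mathcal{H}, \Lambda)$. This reduces to verifying that every affine wall of $(\mathcal{H}_0, \Lambda_0)$ pulls back to an affine wall of $(\mathcal{H}, \Lambda)$ (which follows by adjoining $\phi_0$ to the spanning set and lifting lattice translates from $\Lambda_0 = p(\Lambda)$ to $\Lambda$), ensuring that each tope of $(\mathcal{H}, \Lambda)$ projects into a single tope of $(\mathcal{H}_0, \Lambda_0)$, on which the pullback is given by a single polynomial formula.
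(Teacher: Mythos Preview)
Your proof is correct and follows essentially the same approach as the paper: reduce via Lemma~\ref{ind} to $g=\theta(\sigma,\mathbf n)$, handle the base case $\mathcal H=\mathcal H_\sigma$ explicitly via the sublattice $\bigoplus_i \Z\alpha_i$ and averaging (equation~(\ref{E:aver2})), and then peel off an extra hyperplane using Proposition~\ref{prop:recug}. The only cosmetic difference is that you organize the argument as a double induction (outer on $\dim V$, inner on $|\mathcal H\setminus\mathcal H_\sigma|$), whereas the paper phrases it as a single induction on $|\mathcal H|$ carried over all ambient spaces simultaneously; your explicit verification that $p^*$ preserves piecewise polynomiality spells out a point the paper records just before the proposition.
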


\begin{proof}
We prove the proposition by induction on the number of elements in $\mathcal H$.  Using Lemma \ref{ind}, it
suffices to prove the proposition for $g$ of the form $g=\theta(\sigma, {\bf n})$ for $\sigma \in \mathfrak{B}(\Phi)$ and ${\bf n}$ a sequence
of positive integers.

We first assume that $\Phi$ consists of independent elements $\alpha_1,\alpha_2,\ldots,\alpha_r$ possibly with
multiplicities.  Consider an element $g=\theta(L)$ of $G_{\mathcal H}$ for
$$L=[\alpha_1,\ldots, \alpha_1,\alpha_2,\ldots, \alpha_2, \ldots, \alpha_r,\ldots, \alpha_r]$$  where
$\alpha_i$ appears with multiplicity $k_i$ in $\Phi$.

Let $\Lambda'=\Z\alpha_1\oplus \Z \alpha_2 \oplus\cdots \oplus  \Z
\alpha_r$; clearly $\Lambda'$ is a sublattice of $\Lambda$. We
choose coordinates $t=\sum_i t_i \alpha_i$. Then
$\mathcal{B}(\mathcal H,\Lambda',g)= B(t) $ with
\begin{equation}\label{degree}
B(t)=(-1)^r \prod_{i=1}^r\frac{1}{k_i !} B(k_i,t_i -[t_i]).
\end{equation}
The function $B(t)$ is  a polynomial function on each parallelogram
translated from the  parallelogram $\sum_{i=1}^r [0,1]\alpha_i$. By
equation (\ref{E:aver2}),  the distribution
$\mathcal{B}(\mathcal H,\Lambda,g)$ is obtained by averaging
$\mathcal{B}(\mathcal H,\Lambda',g)$ over $\Lambda/\Lambda'$. Thus
$\mathcal{B}(\mathcal H,\Lambda,g)$ is piecewise polynomial with respect to
$(\mathcal H, \Lambda)$.

We now consider the general case, where the cardinality of $\mathcal H$ is greater than $r$.   In this case there exists
$\phi \in \Phi$ with the property that, for the hyperplane
arrangements $\CH'$ and $\CH_0$ associated to  $\phi$, we have $g\in G_{\mathcal H'}$ and
$g_0$ (the restriction of $g$ to $\phi=0$) is in $G_{\mathcal H_0}$.  Then, using Proposition \ref{prop:recug}, which states
$$\mathcal{B}(\mathcal H,\Lambda,g)=
\mathcal{B}(\mathcal H',\Lambda,g)-p^*\mathcal{B}(\mathcal H_0,\Lambda_0,g_0),$$ we conclude
by induction that $\mathcal{B}(\mathcal H,\Lambda,g)$ is piecewise polynomial with respect to $(\CH,\Lambda)$.
\end{proof}

The Bernoulli series $\mathcal{B}(\Phi,\Lambda)$  is equal to $\mathcal{B}(\mathcal H,\Lambda,g)$ with
$g=\frac{1}{\prod_{\phi\in \Phi} \phi}$;
it is an element of $G_{\mathcal H}$ for we assumed that $\Phi$ spans $V$.  Thus, by Proposition \ref{compute},
we immediately obtain that $\mathcal{B}(\Phi,\Lambda)$ is a piecewise polynomial density.

\begin{corollary}

For any $f\in R_\CH$, the distribution $\mathcal{B}(\mathcal H,\Lambda,f)(v)$ restricts to a tope $\tau$ as a polynomial density.
\end{corollary}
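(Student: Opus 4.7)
The proposition just established handles only the special subspace $G_\CH\subset R_\CH$, whereas the corollary asserts the polynomial-on-topes behavior for every $f\in R_\CH$. My plan is to bootstrap from Proposition \ref{compute} by absorbing the polynomial part of $f$ into a constant-coefficient differential operator acting on a multiple Bernoulli series attached to an element of $G_\CH$.

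First I would normalize the denominator. Any $f\in R_\CH$ admits a representation $f=q/\prod_{\phi\in\Phi}\phi^{n_\phi}$ with $q\in S(V)$ and integers $n_\phi\geq 0$; by multiplying numerator and denominator by suitable extra factors $\phi\in\Phi$, I may arrange $n_\phi\geq 1$ for every $\phi$. Then $h:=1/\prod_{\phi\in\Phi}\phi^{n_\phi}$ equals $\theta(L)$ for the list $L$ that repeats each $\phi\in\Phi$ with multiplicity $n_\phi$. Since $\Phi$ spans $V$, so does $L$, hence $h\in G_\CH$, and I have a decomposition $f=qh$ with $q\in S(V)$ and $h\in G_\CH$.

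Next, I would interpret the factor $q$ as a constant-coefficient differential operator on $V$. Writing $D:=\frac{1}{2i\pi}\partial_v$, the identity $q(D)\,e^{2i\pi\langle v,\gamma\rangle}=q(\gamma)\,e^{2i\pi\langle v,\gamma\rangle}$ combined with the continuity of differentiation on distributions gives termwise
\[\mathcal{B}(\CH,\Lambda,f)=q(D)\,\mathcal{B}(\CH,\Lambda,h).\]
By Proposition \ref{compute}, the distribution $\mathcal{B}(\CH,\Lambda,h)$ is piecewise polynomial with respect to $(\CH,\Lambda)$, so on the interior of any tope $\tau$ it agrees with a polynomial density. A constant-coefficient differential operator applied to a polynomial density on an open set produces another polynomial density, which yields the conclusion on $\tau$.

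The only mild wrinkle, which I expect to be the main thing to address carefully, is that $q(D)$ applied globally to a piecewise polynomial distribution can create singular contributions supported on the affine walls, arising from jumps of the piecewise polynomial function across those walls. However, these spurious contributions are supported outside the open tope $\tau$, so they do not affect the restriction of $\mathcal{B}(\CH,\Lambda,f)$ to $\tau$; that restriction is a polynomial density, as claimed.
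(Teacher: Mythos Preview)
Your proof is correct and follows essentially the same route as the paper: write $f=qh$ with $q\in S(V)$ and $h\in G_\CH$, realize multiplication by $q$ as a constant-coefficient differential operator on $\mathcal{B}(\CH,\Lambda,h)$, invoke Proposition~\ref{compute}, and observe that the singular contributions produced by differentiating across walls are supported away from the open tope. Your treatment is in fact slightly more careful than the paper's (you make explicit the $2i\pi$ normalization and the padding of the denominator to ensure $h\in G_\CH$).
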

\begin{proof}

 Let $f\in  R_\CH$. We can write $f$ as $Pg$ where $P$ is a polynomial and $g\in G_\CH$.
 Then the distribution  $\mathcal{B}(\mathcal H,\Lambda,f)(v)$
 is obtained by applying the differential operator $P(\partial_v)$ to the distribution  $\mathcal{B}(\mathcal H,\Lambda,g)(v)$.

 This  differentiation is in the distribution sense so that it may produce distributions supported on admissible hyperplanes, but
 on an open tope $\tau$, we obtain a polynomial density.
 \end{proof}

\begin{definition}
Given a tope $\tau$ in $\CT(\Phi,\Lambda)$, we denote by
$\Ber(\Phi,\Lambda,\tau)$ the polynomial function  on $V$ such that
the restriction of $\mathcal{B}(\Phi,\Lambda)$ to  $\tau$ coincides
with the restriction of $\Ber(\Phi,\Lambda,\tau)(v) d_{\Lambda}v$
on $\tau$.
\end{definition}

By the above proof, we see that the polynomial
$\Ber(\Phi,\Lambda,\tau)$  is of degree equal to the number of elements in $\Phi$.

\bigskip

The fact that $\mathcal{B}(\Phi,\Lambda)(v)$ is a periodic
distribution  on $V$  implies immediately the following periodicity
formula. For any $\lambda\in \Lambda$ and $v\in V$,
\begin{equation}\label{periodic}
\Ber(\Phi,\Lambda,\tau+\lambda)(v+\lambda)=
\Ber(\Phi,\Lambda,\tau)(v).
\end{equation}

If $\nu$ is a connected subset of $V$ contained in the open set of
regular elements, we denote by $\Ber(\Phi,\Lambda,\nu)$ the
polynomial $\Ber(\Phi,\Lambda,\tau(\nu))$ where $\tau(\nu)$ is the
unique tope containing $\nu$.

Let $\tau \in \CT(\Phi, \Lambda)$ and $\phi\in \Phi$.
If  $v_0\in V_0=V/<\phi>$ is the projection of $v\in \tau$, then $v_0$ is not on any affine wall in $V_0$. Indeed the reciproc image of an affine wall in $V_0$ is an affine wall in $V$.
We denote by $\tau_0$ the unique tope in $V_0$ containing the projection of $\tau$.

\medskip
Equation (\ref{induction}) implies the following relations.
\medskip

If $\Phi- \{\phi\}$ generates $V$, then
\begin{equation}\label{phiphi1}
\partial_\phi \Ber(\Phi,\Lambda,\tau)=\Ber(\Phi- \{\phi\},\Lambda,\tau)-\Ber(\Phi_0,\Lambda_0,\tau_0).
\end{equation}

If $\Phi$ generates $V$, but $\Phi- \{\phi\}$ does not generate $V$, then
\begin{equation}\label{phiphi2}
\partial_\phi \Ber(\Phi,\Lambda,\tau)=-\Ber(\Phi_0,\Lambda_0,\tau_0).
\end{equation}

\begin{remark}
By using reduction to independent variables and the explicit formula  (\ref{degree}), we obtain also a way to compute
$\Ber(\Phi,\Lambda,\tau)$.  This can be applied not too painfully when the number of elements in $\Phi$ is small.
However, the residue formula   due to A.~Szenes \cite{sze1} to compute $\Ber(\Phi,\Lambda,\tau)$  is very efficient
when $\Phi$ is large, provided the dimension of $V$ is relatively small. We will give examples of computations of volumes of moduli spaces using Szenes formula in a next article.
\end{remark}

\section{An Euler-MacLaurin formula}

This section is independent of the rest of the article.

Assume that $\Phi$ generates $V$.  Using the Lebesgue measure associated to the lattice $\Lambda$, we identify
$\mathcal{B}(\Phi,\Lambda)(v)$ to a piecewise polynomial function on $V$.

Let us denote by $\CR$ the set of $\Phi$-admissible subspaces of $V$.
Then $\s=V$ and $\s=\{0\}$ are the maximum and minimum elements of the partially ordered set $\CR$.
If  $\s$ is a $\Phi$-admissible subspace of $V$, we
denote by $\Phi\setminus \s$ the sequence of elements in $\Phi$ not lying in the space $\s$.

The projection of the list $\Phi\setminus \s$ on $V/\s$
will be denoted by $\Phi/\s$. The image of the lattice $\Lambda$ in $V/\s$ is a lattice in $V/\s$.
If $\Phi$ generates $V$, $\Phi/\s$ generates $V/\s$.
Using the projection $V\to V/\s$, we identify the piecewise  polynomial function ${\mathcal B}(\Phi/\s,\Lambda/\s)$ on $V/\s$  to  a
piecewise  polynomial function on $V$ constant along the affine spaces $v+\s$.
Then,  $$\Gamma_{\rm reg}(\Phi/\s):=\Gamma\cap U_{\rm reg}(\Phi/\s)$$ is  the set of elements $\gamma\in \Gamma$  satisfying
$\ll \gamma,s\rr=0$ for all $s\in \s$ and $\ll \gamma,\phi\rr\neq 0$ for  all $\phi\in \Phi\setminus \s$.

We lift functions on $V/\s$ to functions on $V$ by the canonical projection.
Thus ${\mathcal B}(\Phi/\s,\Lambda/\s)$ is the  function on $V$ given by the series (convergent in the sense of generalized functions)
$$\sum_{\gamma\in \Gamma_{\rm reg}(\Phi/\s)} \frac{e^{2i\pi\ll v,\gamma \rr}}{\prod_{\phi\in \Phi\setminus \s} 2i\pi \ll \phi,\gamma\rr}.$$
This function is periodic with respect to the lattice $\Lambda$, piecewise polynomial on $V$ (relative to ($\Phi$,$\Lambda$)) and constant along $v+\s$.
We denote it simply by $\mathcal B(\Phi/\s)$ leaving its dependence on the lattice $\Lambda$ implicit.
If $\s=V$, the function  ${\mathcal B}(\Phi/\s)$  is identically equal to $1$; if $\s=\{0\}$, then  we obtain the multiple Bernoulli series
$\mathcal B(\Phi,\Lambda)$.

\begin{theorem}\label{eumacl}
Let $f$ be a smooth function on $V$, rapidly decreasing with rapidly decreasing derivatives.
Then,
$$\sum_{\lambda\in \Lambda} f(\lambda)=\sum_{\s\in \CR} (-1)^{|\Phi\setminus \s|}
\int_V {\mathcal B}(\Phi/\s)(v) (\prod_{\phi\in \Phi\setminus \s} \partial_\phi) f  (v) dv.$$
\end{theorem}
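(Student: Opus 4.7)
The plan is to reduce the identity to Poisson summation by grouping the Fourier modes $\gamma\in\Gamma$ according to the $\Phi$-admissible subspace that records exactly which elements of $\Phi$ vanish on $\gamma$.

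First, I would observe the following combinatorial partition: for every $\gamma\in\Gamma$, the set $\Phi\cap\gamma^\perp$ spans a uniquely determined $\Phi$-admissible subspace $\s(\gamma)\in\CR$, and $\gamma\in\Gamma_{\reg}(\Phi/\s(\gamma))$. Conversely, if $\gamma\in\Gamma_{\reg}(\Phi/\s)$ for some admissible $\s$, then $\Phi\cap\gamma^\perp=\Phi\cap\s$, forcing $\s=\s(\gamma)$ by admissibility. Hence
\[
\Gamma=\bigsqcup_{\s\in\CR}\Gamma_{\reg}(\Phi/\s).
\]

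Next I would rewrite each summand on the right-hand side by unfolding the definition of $\mathcal B(\Phi/\s)$ and performing repeated integration by parts. Since $f$ is Schwartz, I can pair the distribution $\mathcal B(\Phi/\s)$ with the Schwartz test function $\prod_{\phi\in\Phi\setminus\s}\partial_\phi f$ termwise:
\[
\int_V \mathcal B(\Phi/\s)(v)\Bigl(\prod_{\phi\in\Phi\setminus\s}\partial_\phi\Bigr)f(v)\,dv
=\sum_{\gamma\in\Gamma_{\reg}(\Phi/\s)}\frac{1}{\prod_{\phi\in\Phi\setminus\s}2i\pi\ll\phi,\gamma\rr}\int_V\Bigl(\prod_{\phi\in\Phi\setminus\s}\partial_\phi f\Bigr)(v)e^{2i\pi\ll v,\gamma\rr}\,dv.
\]
Integration by parts turns each $\partial_\phi$ acting on $f$ into multiplication by $-2i\pi\ll\phi,\gamma\rr$ on the exponential, producing a factor $(-1)^{|\Phi\setminus\s|}\prod_{\phi\in\Phi\setminus\s}2i\pi\ll\phi,\gamma\rr$ that cancels the denominator exactly. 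What remains is $(-1)^{|\Phi\setminus\s|}\hat f(-\gamma)$, where $\hat f(\gamma)=\int f(v)e^{-2i\pi\ll v,\gamma\rr}dv$. Multiplying by the prefactor $(-1)^{|\Phi\setminus\s|}$ in the theorem kills the sign, leaving simply $\sum_{\gamma\in\Gamma_{\reg}(\Phi/\s)}\hat f(-\gamma)$.

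Finally I would assemble: summing over $\s\in\CR$ and using the partition,
\[
\sum_{\s\in\CR}(-1)^{|\Phi\setminus\s|}\int_V\mathcal B(\Phi/\s)\Bigl(\prod_{\phi\in\Phi\setminus\s}\partial_\phi\Bigr)f\,dv
=\sum_{\gamma\in\Gamma}\hat f(-\gamma)=\sum_{\gamma\in\Gamma}\hat f(\gamma),
\]
the last equality by the symmetry $\Gamma=-\Gamma$. By the Poisson summation formula (equation \eqref{Poisson}) the right-hand side equals $\sum_{\lambda\in\Lambda}f(\lambda)$.

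The main delicate point is the justification of the termwise operations with $\mathcal B(\Phi/\s)$: both the convergence of the Fourier series in the distribution sense and the legitimacy of integrating by parts inside the sum. Both are standard once one notes that $f$ and all its derivatives are Schwartz, so $\hat f$ decays faster than any polynomial, which controls the series uniformly. The combinatorial partition of $\Gamma$ by $\s(\gamma)$ is straightforward but must be stated explicitly, because the entire proof hinges on its giving a \emph{disjoint} decomposition that matches precisely the index set of the sum over $\CR$.
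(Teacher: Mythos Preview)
Your proof is correct and follows essentially the same route as the paper: partition $\Gamma$ as $\bigsqcup_{\s\in\CR}\Gamma_{\reg}(\Phi/\s)$, identify each block with the Fourier expansion of $\mathcal B(\Phi/\s)$, and invoke Poisson summation. The only cosmetic difference is that the paper applies $\prod_{\phi\in\Phi\setminus\s}\partial_\phi$ to $\mathcal B(\Phi/\s)$ to recover the bare exponential sum and then (implicitly) integrates by parts, whereas you start from the pairing with $\prod\partial_\phi f$ and integrate by parts in the other direction; these are the same computation read in opposite orders.
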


\begin{remark}
The term corresponding to $\s=V$ in the above sum gives the term
$\int_V f(v) dv$.
Thus we may also write the formula as
$$\sum_{\lambda\in \Lambda}f(\lambda)-\int_V f(v)dv=\sum_{\s\neq V} (-1)^{|\Phi\setminus\s|} \int_V {\mathcal B}(\Phi/\s)(v)
(\prod_{\phi\in \Phi\setminus \s} \partial_\phi) f  (v) dv.$$

All the sets $\Phi\setminus \s$ entering in this formula are `long', that is, their complement in $\Phi$ do not generate $V$. In particular
they contain a `cocircuit'.  This formula has been used in \cite{Vergnebox} to obtain a formula for the semi-discrete convolution with the Box Spline.
\end{remark}

\begin{proof}
Let
$$\hat f(y)=\int_{V} e^{2i\pi \langle y,x\rangle} f(x)dx.$$
By Poisson formula
$$\sum_{\lambda \in \Lambda} f(\lambda) =\sum_{\gamma\in \Gamma} \hat f(\gamma).$$

We  group together the terms in $\Gamma$ belonging to $\s^{\perp}$ for $\s \in \CR$. More precisely,
the lattice $\Gamma$ is a disjoint union  over the $\s\in \CR$ of the sets
$$\Gamma_{\reg}(\Phi/\s)=\{\gamma \in \s^{\perp}\cap \Gamma| \ll \phi,\gamma\rr\neq 0\; \text{for all}\;\phi\in \Phi\setminus \s\}.$$
Now in the generalized sense
$$\sum_{\gamma\in \Gamma_{\reg}(\Phi/\s)} \hat f(\gamma)=\int_V \sum_{\gamma\in  \Gamma_{\reg}(\Phi/\s)}e^{2i\pi\ll v,\gamma\rr} f(v)dv$$
$$=\int_V \partial_{\Phi\setminus \s} {\mathcal B}(\Phi/\s)(v) f(v)$$
and we obtain the statement in the theorem.
\end{proof}

\section{Wall crossing}\label{section:Jump}

In this section we again assume that $\Phi$ generates $V$. Under this assumption, we compare the polynomials
$\Ber(\Phi,\Lambda,\tau)$ associated to two adjacent topes of $\CT(\Phi, \Lambda)$ separated by an hyperplane $W$.
We remark that due to the periodicity property of $\mathcal{B}(\Phi,\Lambda)$ it suffices to consider jumps over an hyperplane
$W$ passing through the origin.

If $D_1$ and $D_2$  are two distributions on $V$ with supports $S_1$ and $S_2$ with the property that  for any $v\in V$
the intersection of $v-S_1$ and $S_2$
lies in a  compact set, then the convolution $D_1*D_2$ is well
defined.

We recall the definition of multispline. Let $X=[v_1,v_2,\ldots,v_m]$ be a sequence of non-zero vectors in $V$. We will first
consider the case where $X$ spans a pointed cone. The {\it multivariate spline $T(X)$} is the tempered distribution defined on
test functions $f$ by:
\begin{equation}\label{multiva}
\langle T(X)\,|\,f\rangle = \int_0^\infty\cdots\int_0^\infty
f(\sum_{i=1}^mt_i v_i)dt_1\cdots dt_m.
\end{equation}

If $X$ spans $V$, we may interpret  $T(X)$ as a function on $V$ supported in the cone $C(X)$ generated by $X$. This function is
piecewise polynomial. If $v\in X$, then $\partial_v T(X)=T(X-\{v\})$.  When $X$ is the empty set, $T(X)=\delta_0$.

We now consider the case where the elements of  $X$  do not necessarily lie in a half-space. We introduce a polarization of $X$ given by a
vector $u$ in $U$. Let $u\in U$ be a vector that is nonzero on all
elements of $X$. We will then say that the vector $u$ is {\em polarizing} for $X$.
Divide  the list   $X$ into two lists $A$ and $B$, the lists  of positive and negative vectors on $u$ respectively. We then define
$$T(X,u)=(-1)^{|B|}T([A, -B]) .$$

\begin{example}\label{eq:spline}
With the notation of Example \ref{eq:bernoulli},
$$
T(\Phi_k,\omega^*)=\begin{cases} &\frac{t^{k-1}}{(k-1)!}\;\;\text{if $t>0$,}\\
&0 \;\;\;\;\text{if $t<0$}\end{cases}\;\;\text{and}\;\;T(\Phi_k,-\omega^*)=
\begin{cases}&0 \;\;\;\;\text{if $t>0$,}\\
&-\frac{t^{k-1}}{(k-1)!} \;\; \text{if $t<0$.}\\
\end{cases}$$
\end{example}

\bigskip

We return to our set up.  Let $\Phi$ be a sequence of  nonzero vectors in $V$, spanning $V$.
Let $W$ be a $\Phi$-admissible hyperplane. Let $E\in \Gamma$ be an
equation of this hyperplane, where $E$ is a primitive vector in
$\Gamma$; this fixes $E$ up to sign. The lattice $\Lambda$ is fixed,
and we write simply $dv$ instead of $d_\Lambda v$. Similarly we denote by $dh$ the density determined by $\Lambda\cap W$.
As $E$ does not vanish on any element of $\Phi\setminus W$, we may define $T(\Phi\setminus W,E)$ as above; it is a distribution supported on $E\geq 0$.
Let $p$ be a polynomial density on $W$.
Then, the convolution $p*T(\Phi\setminus W,E)$ is well defined and it is supported on $E\geq 0$.
Similarly, $p*T(\Phi\setminus W,-E)$  is supported on $E\leq 0$.
It is easily proven (see \cite{bover}) that  $p*T(\Phi\setminus W,E)-p*T(\Phi\setminus W,-E)$ is given by integration
against a {\it polynomial} density. We thus define the polynomial
${\rm Pol}(p,\Phi\setminus W,E)$ by the equation
$${\rm Pol}(p,\Phi\setminus W,E)(v)dv=p*T(\Phi\setminus W,E)-p*T(\Phi\setminus W,-E).$$

The following properties of  ${\rm Pol}(p,\Phi\setminus W,E)$ follow directly from the above equation.
\begin{lemma}\label{L:derPol}
Let $\Psi=\Phi\setminus W$.

(a)Let $\psi \in \Psi$.  Then,
\[\partial_{\psi}{\rm Pol}(p,\Psi,E)={\rm Pol}(p,\Psi-\{\psi\},E).\]

(b) If $\Psi=[\psi]$, then for $h\in W$ and $t\in \R$,
\[{\rm Pol}(p, \{\psi\},E)(h+t\psi)=\frac{f(h)}{\ll \psi, E\rr}\]
if $p(h)=f(h)dh.$

(c) If $|\Psi|>1$, then the restriction of ${\rm Pol}(p, \Psi,E)$ to
$W$ vanishes of order $|\Psi|-1$.
\end{lemma}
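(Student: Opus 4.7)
For part (a), the key observation is that for any $\psi \in \Psi$, the polarized multispline satisfies $\partial_\psi T(\Psi, E) = T(\Psi - \{\psi\}, E)$. This is an elementary consequence of the definition $T(\Psi, E) = (-1)^{|B|}T([A, -B])$, the multispline differentiation rule $\partial_v T(X) = T(X - \{v\})$ for $v \in X$, and the antisymmetry $\partial_v = -\partial_{-v}$; one simply checks the two cases $\psi \in A$ and $\psi \in B$ separately, and the signs $(-1)^{|B|}$ versus $(-1)^{|B|-1}$ cancel out. Since $\partial_\psi$ commutes with convolution by $p$, applying it to both sides of the defining equation $\mathrm{Pol}(p,\Psi,E)(v)\,dv = p*T(\Psi,E) - p*T(\Psi,-E)$ yields (a).

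For part (b), I would compute the two convolutions directly in the coordinates $v = h + t\psi$ with $h \in W$. Choose a $\mu$ in $V$ whose image generates $\Lambda/(\Lambda \cap W)$ and is scaled so $\langle \mu, E\rangle = 1$. Writing $\psi = \langle \psi, E\rangle \mu + w_\psi$ with $w_\psi \in W$, the change of variables from $(h, t)$ to the natural $(\mu,W)$-coordinates has Jacobian $\langle \psi, E\rangle$, hence $dv = \langle \psi, E\rangle\, dh\, dt$. Testing $p * T([\psi])$ against a smooth function and carrying out this change of variables yields $p*T([\psi])(v) = \frac{f(h(v))}{\langle \psi, E\rangle} \mathbf{1}_{\langle v,E\rangle \geq 0}\, dv$ when $\langle \psi, E\rangle > 0$ (with an analogous expression in the opposite sign case). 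Subtracting the expression for the opposite polarization $-E$ collapses the two indicator functions and produces formula (b).

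For part (c), the plan is to write $p * T(\Psi, E) = p * T(\{\psi_1\}, E) * T(\{\psi_2\}, E) * \cdots * T(\{\psi_m\}, E)$ for an arbitrary ordering of $\Psi = [\psi_1, \ldots, \psi_m]$; each factor $T(\{\psi_i\}, E) = \pm T([\pm \psi_i])$ is a Heaviside measure on a ray lying in the closed half-space $\{\langle v, E\rangle \geq 0\}$. By the one-dimensional computation underlying (b), the first convolution $p * T(\{\psi_1\}, E)$ is a bounded function, discontinuous across $W$. Each subsequent convolution with a Heaviside distribution on a ray transverse to $W$ raises the regularity across $W$ by one order, so for $m \geq 2$ the function $p * T(\Psi, E)$ is $C^{m-2}$ on $V$. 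Since it is supported on $\{\langle v, E\rangle \geq 0\}$, its value and its first $m - 2$ transverse derivatives must vanish on $W$; the same is true for $p * T(\Psi, -E)$. Their difference is the polynomial density $\mathrm{Pol}(p, \Psi, E)\,dv$, and therefore this polynomial vanishes to order at least $m - 1$ along $W$, proving (c).

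The main obstacle will be the precise justification of the regularity claim in (c) — namely, that convolving a function with the Heaviside $T([\psi])$ along a direction $\psi$ transverse to $W$ increases $C^k$ regularity across $W$ by one order. Equivalently, one can argue via the explicit simplex parametrization: on the level set $\{\sum \langle \psi_i,E\rangle t_i = t,\ t_i \geq 0\}$, the substitution $s_i = t_i/t$ produces a Jacobian $t^{m-1}$, exhibiting $p*T(\Psi,E)(v)$ as $\langle v, E\rangle^{m-1}$ times a smooth function of $(h, t)$. Either route is standard for multisplines, but the bookkeeping must be handled carefully in this anisotropic setting where $W$ is not parallel to the ray directions.
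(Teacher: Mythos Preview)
The paper does not actually prove this lemma: it simply states that ``the following properties of ${\rm Pol}(p,\Phi\setminus W,E)$ follow directly from the above equation,'' meaning the defining identity
\[
{\rm Pol}(p,\Psi,E)(v)\,dv = p*T(\Psi,E) - p*T(\Psi,-E).
\]
Your proposal is a correct and natural way to fill in those details. Part~(a) is exactly the observation the paper has in mind: the multispline rule $\partial_\psi T(X)=T(X-\{\psi\})$ passes through the polarization signs and through convolution with $p$. Part~(b) is a direct coordinate computation, and your Jacobian bookkeeping is right (one could alternatively read it off from the residue formula in the next lemma, but your route is more self-contained). For part~(c), both arguments you sketch are valid: the regularity-raising property of convolving with a transverse Heaviside, and the simplex parametrization producing the explicit factor $\langle v,E\rangle^{m-1}$. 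The second is perhaps cleaner here since the functions involved are piecewise polynomial rather than merely $C^k$, so the scaling $s_i = t_i/t$ gives the vanishing order directly without appealing to a general smoothing principle. Either way, since the paper offers no proof to compare against, your write-up stands on its own.
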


The following  one dimensional residue formula for
${\rm Pol}(p,\Phi\setminus W,E)$ is given in \cite{bover}. It is useful in computing the convolutions.
We write $p(h)=f(h) dh$ where $f$ is a polynomial function on the hyperplane $W$.

\begin{lemma}\label{defPol}
Let $P$ be a polynomial function on  $V$ extending $f$. Then, for $v\in V$,
\[{\rm Pol}(p,\Phi\setminus W ,E)(v)=\Res_{z=0} \left(\left(P(\partial_x)\cdot
\frac{e^{\langle v,x+z E\rangle }}{\prod_{\phi\in \Phi\setminus W}\langle \phi,x+zE\rangle }\right)_{x=0}\right).\]
\end{lemma}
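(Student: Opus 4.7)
The strategy is induction on $|\Psi|$ where $\Psi := \Phi\setminus W$, driven by Lemma~\ref{L:derPol}. The engine is that both sides satisfy the same first-order differential recurrence: for $\psi\in\Psi$, differentiating the exponential $e^{\langle v,x+zE\rangle}$ in the $v$-direction $\psi$ brings down the factor $\langle\psi,x+zE\rangle$, which cancels one factor in the denominator, so
\[\partial_\psi \Res_{z=0}\left[\left(P(\partial_x) \frac{e^{\langle v, x+zE\rangle}}{\prod_{\phi\in\Psi}\langle\phi,x+zE\rangle}\right)_{x=0}\right] = \Res_{z=0}\left[\left(P(\partial_x) \frac{e^{\langle v, x+zE\rangle}}{\prod_{\phi\in\Psi-\{\psi\}}\langle\phi,x+zE\rangle}\right)_{x=0}\right].\]
This matches the identity $\partial_\psi {\rm Pol}(p,\Psi,E) = {\rm Pol}(p,\Psi-\{\psi\},E)$ from Lemma~\ref{L:derPol}(a), so once the formula is known for $\Psi-\{\psi\}$, the two sides for $\Psi$ have matching $\partial_\psi$ for every $\psi\in\Psi$.

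For the base case $|\Psi|=1$ with $\Psi=\{\psi\}$, I would apply the shift identity $P(\partial_x)(e^{\langle v, x\rangle} g(x)) = e^{\langle v, x\rangle} P(v+\partial_x) g(x)$ at $x=0$, together with the expansion $\partial_u^\alpha \langle\psi,u\rangle^{-1}|_{u=zE} = (-1)^{|\alpha|}|\alpha|!\,\psi^\alpha/(z\langle\psi,E\rangle)^{|\alpha|+1}$ and the multinomial identity $\sum_{|\alpha|=k}\tfrac{k!}{\alpha!}\psi^\alpha(\partial^\alpha P)(v) = \partial_\psi^k P(v)$. After multiplying by $e^{z\langle v, E\rangle}$ and extracting the coefficient of $z^{-1}$, Taylor's formula $P(v+s\psi) = \sum_k \tfrac{s^k}{k!}\partial_\psi^k P(v)$ collapses the sum to
\[\frac{1}{\langle\psi,E\rangle}\,P\!\left(v - \frac{\langle v, E\rangle}{\langle\psi,E\rangle}\psi\right) = \frac{f(h)}{\langle\psi,E\rangle}\quad\text{for } v = h + t\psi,\]
matching Lemma~\ref{L:derPol}(b). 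A welcome consequence is that the answer depends only on $P|_W = f$, confirming that the residue formula is independent of the choice of extension $P$.

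For the inductive step with $|\Psi|\geq 2$, it suffices to match one boundary value, and I would show both sides vanish on $W$. The left-hand side vanishes by Lemma~\ref{L:derPol}(c). For the right-hand side, when $v\in W$ one has $\langle v, E\rangle = 0$, so the exponential loses its $z$-dependence; after applying the shift identity, the expression becomes $\sum_\alpha \tfrac{(\partial^\alpha P)(v)}{\alpha!}\, \partial_x^\alpha \prod_{\phi}\langle\phi,x+zE\rangle^{-1}|_{x=0}$, and each term is a scalar multiple of $z^{-(|\Psi|+|\alpha|)}$. Since every $z$-exponent is at most $-|\Psi|\leq -2$, the coefficient of $z^{-1}$ vanishes. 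Choosing $\psi\in\Psi$ with $\psi\notin W$ (such a $\psi$ exists because $\Phi$ spans $V$ and $W$ is a hyperplane), the decomposition $V = W\oplus\R\psi$ shows that a polynomial on $V$ is uniquely determined by its restriction to $W$ together with its $\psi$-derivative. By the derivative recurrence of paragraph one and the inductive hypothesis applied to $\Psi-\{\psi\}$, the $\psi$-derivatives of both sides agree; since both sides also vanish on $W$, equality follows on all of $V$.

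The main obstacle is the base-case computation, where the Laurent expansion in $z$ and the shift-by-$v$ identity must be combined carefully to witness the collapse to $P$ evaluated at the $W$-component of $v$; the vanishing-on-$W$ argument in the inductive step is a cleaner pole-order bookkeeping, and the derivative recurrence is essentially formal.
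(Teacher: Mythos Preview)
Your argument is correct. The paper does not actually prove this lemma here; it merely quotes it from \cite{bover}, so there is no in-paper proof to compare against directly. Your induction on $|\Psi|$ via Lemma~\ref{L:derPol} is a clean, self-contained route: the derivative recurrence in $v$ is immediate from $\partial_\psi e^{\langle v,x+zE\rangle}=\langle\psi,x+zE\rangle e^{\langle v,x+zE\rangle}$, the base case correctly collapses (via the shift identity and Taylor's theorem) to $\tfrac{1}{\langle\psi,E\rangle}P\!\big(v-\tfrac{\langle v,E\rangle}{\langle\psi,E\rangle}\psi\big)=\tfrac{f(h)}{\langle\psi,E\rangle}$, and your vanishing-on-$W$ argument for $|\Psi|\geq 2$ is the right pole count (homogeneity of $\prod_\phi\langle\phi,x+zE\rangle^{-1}$ of degree $-|\Psi|$ in $(x,z)$ forces every term at $x=0$ to be a multiple of $z^{-|\Psi|-|\alpha|}$, so no $z^{-1}$ term when $\langle v,E\rangle=0$). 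Two minor remarks: your parenthetical ``such a $\psi$ exists because $\Phi$ spans $V$'' is unnecessary since every element of $\Psi=\Phi\setminus W$ already lies outside $W$; and to close the inductive step cleanly you should note (as you implicitly use) that the residue side is a \emph{polynomial} in $v$, which follows from your expansion $\sum_\alpha \tfrac{(\partial^\alpha P)(v)}{\alpha!}\,c_\alpha\,\tfrac{\langle v,E\rangle^{|\Psi|+|\alpha|-1}}{(|\Psi|+|\alpha|-1)!}$.
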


\medskip

\begin{theorem}\label{T:jumpber}
Let $\tau_1$ and $\tau_2$ be two adjacent topes in $\CT(\Phi, \Lambda)$ separated by the
hyperplane $W$ defined by $E$ with $\ll v,E\rr>0$ for any $v \in \tau_1$. Denote by $\tau_{12}$ the tope in $\CT(\Phi \cap
W,\Lambda\cap W)$ containing $\overline{\tau_1} \cap
\overline{\tau_2}$ in its closure. Let
$\Ber^{\tau_{12}}:=\Ber(\Phi\cap W,\Lambda \cap W,\tau_{12})dh$ be the
polynomial  density  on $W$ determined by $\tau_{12}$.
 Then,
\begin{equation}\label{E:jump} (\Ber(\Phi,\Lambda,\tau_1)-\Ber(\Phi,\Lambda,\tau_2))dv=
\Ber^{\tau_{12}} *T(\Phi\setminus W,E)-
\Ber^{\tau_{12}} *T(\Phi\setminus W,-E) .\end{equation}
\end{theorem}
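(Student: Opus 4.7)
The plan is to establish the wall-crossing identity (\ref{E:jump}) by induction on $N:=|\Phi\setminus W|$, the number of elements of $\Phi$ transverse to the wall (counted with multiplicity). Note that $N\geq 1$ since $\Phi$ spans $V$ but $\Phi\cap W$ spans only $W$. Both sides of (\ref{E:jump}) are polynomial densities on $V$: the left-hand side by definition of $\Ber$, and the right-hand side by the discussion preceding Lemma \ref{L:derPol}. Denote the corresponding polynomials by $L$ and $R$.

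For the inductive step ($N\geq 2$), pick any $\psi\in \Psi:=\Phi\setminus W$ and differentiate both sides by $\partial_\psi$. Lemma \ref{L:derPol}(a) yields $\partial_\psi R={\rm Pol}(\Ber^{\tau_{12}},\Psi-\{\psi\},E)$. The recurrence (\ref{phiphi1}) gives
$$\partial_\psi\Ber(\Phi,\Lambda,\tau_i)=\Ber(\Phi-\{\psi\},\Lambda,\tau_i)-\Ber(\Phi_0,\Lambda_0,\tau_{i,0}),$$
where $V_0=V/\R\psi$. The key geometric observation is that since $\psi\notin W$, the projection $W\to V_0$ is an isomorphism; $W$ therefore does not project to any wall of $V_0$, and a path from $\tau_1$ to $\tau_2$ crossing only $W$ projects to a wall-free path in $V_0$, forcing $\tau_{1,0}=\tau_{2,0}$ and cancelling the correction terms. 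Applying the inductive hypothesis to $\Phi-\{\psi\}$ (with the same $W$, $\tau_{12}$ and $\Ber^{\tau_{12}}$, since $\Phi\cap W$ is unchanged) matches $\partial_\psi L$ with $\partial_\psi R$.

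Letting $\psi$ range over $\Psi$ shows $L-R$ is translation-invariant along ${\rm span}(\Psi)$; since ${\rm span}(\Psi)+W=V$, the polynomial $L-R$ is determined by its restriction to $W$. The vanishing $R|_W=0$ (in fact to order $|\Psi|-1$) is Lemma \ref{L:derPol}(c), so closing the induction requires $L|_W=0$. This amounts to the continuity of $\mathcal B(\Phi,\Lambda)$ across $W$ when $|\Psi|\geq 2$, which I plan to derive by iterating the recurrence (\ref{introrec}) to upgrade regularity, using that $|\Psi|\geq 2$ provides enough smoothing in the direction transverse to $W$.

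The base case $N=1$ is treated directly. With $\Psi=\{\psi\}$, recurrence (\ref{phiphi2}) and Lemma \ref{L:derPol}(a) give $\partial_\psi L=\partial_\psi R=0$, so both polynomials descend to $W\cong V/\R\psi$; Lemma \ref{L:derPol}(b) identifies $R$ as $f(h)/\langle\psi,E\rangle$ with $f$ the density $\Ber^{\tau_{12}}$. To evaluate $L$, I pass to the direct-sum sublattice $\Lambda'=(\Lambda\cap W)\oplus \Z\psi_0$ (with $\psi_0$ primitive along $\psi$) via the averaging relation (\ref{E:aver}); for $\Lambda'$ the Bernoulli series factors as an outer product $\mathcal B(\Phi,\Lambda')=\mathcal B(\Phi\cap W,\Lambda\cap W)\boxtimes \mathcal B([\psi],\Z\psi_0)$, reducing the wall jump to the classical one-dimensional Bernoulli jump of Example \ref{eq:bernoulli}. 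The main obstacle I anticipate is the careful bookkeeping of the $\langle\psi,E\rangle$ and $[\Lambda:\Lambda']$ factors in this averaging so that the normalization $1/\langle\psi,E\rangle$ emerges exactly, together with producing a clean proof of $L|_W=0$ for $|\Psi|\geq 2$ sharp enough to close the inductive step.
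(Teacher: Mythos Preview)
Your proposal is correct and follows essentially the same approach as the paper: induction on $|\Phi\setminus W|$, with the inductive step carried out by differentiating both sides along some $\psi\in\Phi\setminus W$ (using (\ref{phiphi1}) and Lemma~\ref{L:derPol}(a)) and then matching the restrictions to $W$, and with the base case handled by passing to the product sublattice $(\Lambda\cap W)\oplus\Z\psi$ via the averaging formula (\ref{E:aver}). Two minor simplifications you may adopt from the paper: a single $\psi\notin W$ already gives $\R\psi+W=V$, so there is no need to range over all of $\Psi$; and the continuity of $\mathcal{B}(\Phi,\Lambda)$ across $W$ when $|\Psi|\geq 2$ follows from one application of (\ref{introrec}) (the transverse derivative $\partial_\psi\mathcal{B}(\Phi,\Lambda)$ is piecewise polynomial since $\Phi-\{\psi\}$ still spans $V$), without any iteration.
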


\begin{remark}
Formula (\ref{E:jump}) is very similar to jump formulae for volume of reduced spaces in Hamiltonian geometry.
Indeed if $\mu:M\to \mathfrak t^*$ is a proper moment map associated to an Hamiltonian action of a  torus $T$,
then the set of regular values of $\mu$ is the complement of a certain number of affine hyperplanes. On each
connected component, volumes of reduced spaces $M_{red}(v):=\mu^{-1}(v)/T$ are given by polynomial functions of $v$.
When crossing a wall, the variation of these polynomials follow the same jump scheme as in equation (\ref{E:jump}):
they are determined by a polynomial volume function associated to a smaller Hamiltonian manifold $M_0$ and weights of
the normal bundle of $M_0$ in $M$ (\cite{par}). In particular, when the sequence $\Phi$ is comprised of
positive coroots of a compact connected Lie group $G$ with multiplicity
$2g-1$ and $\Lambda$ is the coroot lattice of $G$, the polynomials $\Ber(\Phi,\Lambda,\tau)$ describe (up to some
normalization)  the symplectic volume of the moduli space of flat $G$-connections on
Riemann surface of genus $g$ with one boundary component, around
which the holonomy is determined by $v$. These moduli spaces are reduced spaces $M_{red}(v)$ of an Hamiltonian action.
\end{remark}

\begin{proof}
We will first verify the claim for the case where there is only one
vector $\phi$ in $\Phi$ that is not contained in $W$.

Let $\Lambda_0=\Lambda\cap W$.
We consider the lattice $\Lambda_b=\Lambda_0 \oplus \Z \phi.$
By formula (\ref{E:aver}),
\begin{equation}\label{E:aver2}\mathcal{B}(\Phi,\Lambda)(v)=\sum_{\lambda_j\in
\Lambda/\Lambda_b}\mathcal{B}(\Phi,\Lambda_b)(v+\lambda_j).\end{equation}
For $t$ in a small neighborhood of zero, let $\tau_1$ (respectively $\tau_2$) denote the tope containing the
open set of $v=h+t \phi$ for $h\in W$ lying in a relatively compact open subset of $\tau_{12}$ and $t>0$
(respectively $t<0$).

We may express a representative of a non-zero $\lambda_j \in \Lambda/\Lambda_b$ as
$\lambda_j=h_j+t_j\phi$ for $h_j \in \Lambda_0$ and $t_j \notin \Z$.
As the lattice $\Lambda_b$ is product of lattices, we have
$$\mathcal{B}(\Phi,\Lambda_b)(h+t  \phi)=\mathcal{B}(\Phi \cap
W,\Lambda_0)(h) (-t+[t]+\frac{1}{2})dt.$$ Observe that the jump in
the function
$\mathcal{B}(\Phi,\Lambda_b)(v+\lambda_j)=\mathcal{B}(\Phi,\Lambda_b)(h+h_j+(t+t_j)
\phi)$ as $t$ changes sign in a small neighborhood of zero is
precisely zero for the nontrivial representative $\lambda_j$ since
$t_j$ is not integral. Thus the only contribution to the jump comes
from the trivial $\lambda_j$ in the sum of equation (\ref{E:aver2}).
We get
\begin{multline*}
\Ber(\Phi,\Lambda,\tau_1)(v)-\Ber(\Phi,\Lambda,\tau_2)(v)=\\
\frac{1}{\langle E,\phi\rangle }\Ber(\Phi \cap W,\Lambda_0)(h)((-t+\frac{1}{2}-(-t-\frac{1}{2})).\end{multline*}

The convolution product in this case is just the product in coordinates so that
$$\Ber(\Phi\cap W,\Lambda_0,\tau_{12})*T(\{ \phi\},E)-\Ber(\Phi\cap W,\Lambda_0,\tau_{12})(h)
*T(\{ \phi\},-E)$$ is equal  at the point $(h,t)$ to $\Ber(\Phi\cap W,\Lambda_0,\tau_{12})(h)$
and hence we obtain the claimed formula.

Now consider the case where there are several elements of $\Phi$ that
do not lie in $W$. Let $\phi$ be a vector in $\Phi\setminus W$.  Let
$\Phi'=\Phi-\{\phi\}$; $\Phi'$ still generates $V$ and
$\Phi' \cap W=\Phi \cap W$. Equation
(\ref{induction}) implies that in this case $\mathcal{B}(\Phi,\Lambda)$ is
continuous on $W$: indeed  the derivative in the direction $\phi$ is
a piecewise  polynomial function.

Let $\tau_1'$ and $\tau_2'$ be the topes
of $\Phi'$ containing $\tau_1$ and $\tau_2$ respectively.  They are adjacent with
respect to $W$ and $\Ber^{\tau_{12}}=\Ber^{\tau_{12}'}$. Using
equation (\ref{phiphi1}), we have
\[\partial_{\phi}\Ber(\Phi,\Lambda,\tau_1)
-\partial_{\phi}\Ber(\Phi,\Lambda,\tau_2)=\Ber(\Phi',\Lambda,\tau_1')-\Ber(\Phi',\Lambda,\tau_2').\]
Indeed the topes $\tau_1$ and $\tau_2$ give the same tope $\tau_0$ under projection onto $V_0=V/<\phi>$.

By Lemma~\ref{L:derPol} part $(a)$,\[\partial_{\phi}{\rm
Pol}(\Ber^{\tau_{12}},\Phi\setminus W,E)={\rm
Pol}(\Ber^{\tau'_{12}},\Phi'\setminus W,E).\] Denote by ${\rm
Leq}(\Phi)$ the left hand side, and by ${\rm Req}(\Phi)$ the right
hand side of equation (\ref{E:jump}). By induction, we have
$\partial_{\phi}({\rm Leq}(\Phi)-{\rm Req}(\Phi))=0$.  Thus, the
polynomial function is constant in the direction of $\R\phi$.  The
left hand side vanishes on $W$ by the continuity of
$\mathcal{B}(\Phi, \Lambda)$ on $W$.    Hence the claim.
\end{proof}

We now demonstrate the theorem  with various examples.

\begin{example} Recall the data of Example \ref{eq:bernoulli}.  Let $\tau_1$ and $\tau_2$ be two adjacent
topes defined by inequalities $0<t<1$ and $-1<t<0$ respectively.
By Theorem \ref{T:jumpber} and Example \ref{eq:spline},
$$\Ber(\Phi,\Lambda,\tau_1)(t\omega)-\Ber(\Phi,\Lambda,\tau_2)(t\omega)=\frac{t^{k-1}}{(k-1)!},$$ which is indeed
equal to $-\frac{1}{k!}B(k,t)+\frac{1}{k!}B(k,t+1)$ as it can be
seen from the explicit expression of
$\mathcal{B}(\Phi_k,\Lambda)(t\omega)$ in Example
\ref{eq:bernoulli}.
\end{example}

\begin{example} Recall the data of Example \ref{bernoulli2}.
Let $\tau_1$ and $\tau_2$ be the two adjacent topes separated by the hyperplane
$W=\R(e_1+e_2)$ (see figure \ref{topea2}(a)).  Then $E=-e^1+e^2$.
\begin{figure}
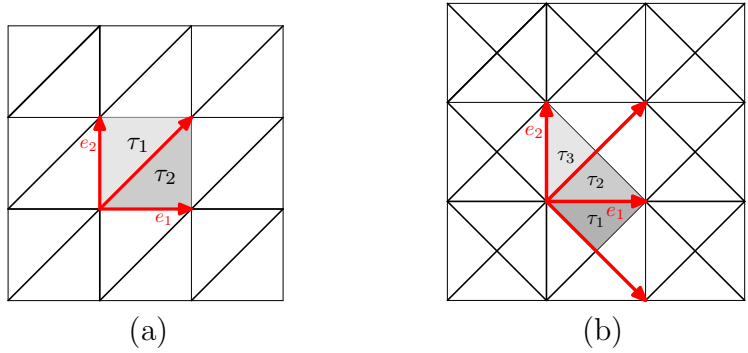

\begin{center}
 \includegraphics[width=37mm]{tope2mod.mps}\hspace{2cm}
\includegraphics[width=40mm]{tope3mod.mps}\\
  (a)\hspace{5.5cm}(b)
 \caption{$\CT([e_1,e_2,e_1+e_2],\Lambda)$\;versus $\CT([e_1,e_2,e_1+e_2,e_1-e_2],\Lambda)$ with
 $\Lambda=\Z e_1 \oplus \Z e_2$}\label{topea2}
 \end{center}
\end{figure}

We express $v=v_1e_1+v_2e_2 \in V$ as $v=v_1(e_1+e_2)+(v_2-v_1)e_2$ and $x\in U$ as $x=x_1e^1+x_2(-e^1+e^2)$.
Using Example~\ref{eq:bernoulli}, with $\Phi \cap W=e_1+e_2$ and $\Lambda \cap W=\Z(e_1+e_2)$
at $v_1(e_1+e_2)$, we have
$$
\begin{array}{ll}
\Ber^{\tau_{12}}&=\Ber(\Phi \cap W,\Lambda \cap W,\tau_{12})(v_1(e_1+e_2))\\
&=\frac{1}{2}-v_1
\end{array}$$
In the above coordinates of $U$, the operator
$\Ber^{\tau_{12}}(\partial_{x})=\frac{1}{2}-\partial_{x_1}$ under the
identification $P(\partial_x)e^{\ll a,x\rr}=P(a)e^{\ll a,x\rr}$. Then,
\[\begin{array}{ll}
{\rm Pol}(\Ber^{\tau_{12}},\Phi,E)(v)&=\Res_{z=0}\left(
\left(\Ber^{\tau_{12}}(\partial_x)\cdot \frac{e^{\ll v,x+z E\rr
}}{\prod_{\phi\in \Phi\setminus W}\ll
\phi,x+zE\rr }\right)_{x=0}\right)\\
&=\Res_{z=0}
\left(\left
((\frac{1}{2}-\partial{x_1})\cdot\frac{e^{v_1x_1+(v_2-v_1)x_2+(v_2-v_1)z}}{(x_1-x_2-z)(x_2+z)}\right)_{x=0}\right)
\\
&=\Res_{z=0}\left(\left((\frac{1}{2}-\partial{x_1})\cdot\frac{e^{v_1x_1+(v_2-v_1)z}}{(x_1-z)z}\right)_{x_1=0}\right)\\
&=\frac{1}{2}(1-v_1-v_2)(v_1-v_2),
\end{array}\]
which is indeed the jump $\Ber(\Phi,\Lambda,\tau_1)(v)-\Ber(\Phi,\Lambda,\tau_2)(v)$ as it can be seen from the
explicit expression in Example \ref{bernoulli2}.
\end{example}

\begin{example} Recall the data of Example \ref{exampB2}.

{\it (a) Jump over the wall $W=\R e_1$:}
Then $E=e^2$, $\Phi \cap W=\{e_1\}$ and $\Lambda \cap W=\Z e_1$ (see figure \ref{topea2}(b)).
\[\begin{array}{ll}
{\rm Pol}(Ber^{\tau_{21}},\Phi,E)(a)&=\Res_{z=0}
\left(Ber^{\tau_{21}}(\partial_x)\cdot \frac{e^{\ll a,x+z e^2\rr
}}{\prod_{\phi\in \Phi\setminus \Phi \cap W}\ll \phi,x+zE\rr }\right)_{x=0}\\
&=\res_{z=0}\left((-\partial_{x_1}+\frac{1}{2})
\cdot\frac{e^{v_1x_1+v_2x_2+v_2z}}{(x_2+z)(x_1+x_2+z)(x_1-x_2-z)}\right)_{x_2=0}\\
&=\frac{1}{4}v_2^2(2v_1-1),
\end{array}\]
which is indeed the jump $\Ber(\Phi,\Lambda,\tau_2)(v)-\Ber(\Phi,\Lambda,\tau_1)(v)$ as it can be seen from the
explicit expression in Example \ref{exampB2}.

\medskip

{\it (b) Jump over the wall $W=\R(e_1+e_2)$:}
Then $E=-e^1+e^2$.  We express $v=v_1e_1+v_2e_2 \in V$ as $v=v_1(e_1+e_2)+(v_2-v_1)e_2$ and $x\in U$ as
$x=x_1e^1+x_2(-e^1+e^2)$.  Using Example~\ref{eq:bernoulli}, with $\Phi \cap W=e_1+e_2$ and $\Lambda \cap W=\Z(e_1+e_2)$
at $v_1(e_1+e_2)$, we have
$\Ber^{\tau_{23}}=\Ber(\Phi \cap W,\Lambda \cap W,\tau_{23})(v_1(e_1+e_2))
=\frac{1}{2}-v_1$. Then,
\[\begin{array}{ll}
{\rm Pol}(\Ber^{\tau_{23}},\Phi,E)(v)
&=\Res_{z=0}
\left(\left
((\frac{1}{2}-\partial{x_1})\cdot\frac{e^{v_1x_1+(v_2-v_1)x_2+(v_2-v_1)z}}{(x_1-x_2-z)(x_2+z)(x_1-2x_2-2z)}\right)_{x=0}\right)
\\
&=-\frac{1}{8}(v_1-1+v_2)(v_1-v_2)^2,
\end{array}\]
which is indeed the jump $\Ber(\Phi,\Lambda,\tau_3)(v)-\Ber(\Phi,\Lambda,\tau_2)(v)$ as it can be seen from the
explicit expression in Example \ref{exampB2}.
\end{example}

\section{The affine case}\label{section:affine}

This section generalizes previous results to the affine case. Results proven here are not needed for the following section.

Let $\Phi=[\phi_1,\ldots, \phi_N]$ be a list of elements of
$V_{\mathbb Q}$ and let ${\bf z}=[z_1,z_2,\ldots, z_N]$ be a list of complex
numbers.  We consider the augmented list $\tilde
\Phi:=[[\phi_1,z_1],\ldots, [\phi_N,z_N]]$ and define
$$\Gamma_{\reg}(\tilde \Phi)=\Gamma \cap U_{\reg}(\tilde\Phi)$$ where
$$U_{\reg}(\tilde \Phi)=\{u \in U |\; \langle
\phi_j,u\rangle +z_j\neq 0\;\mbox{for all}\;j\}.$$

\begin{definition}  The \textit{affine} multiple Bernoulli series is the distribution
$$\mathcal{B}(\tilde \Phi,\Lambda)=\sum_{\gamma\in \Gamma_{\reg}(\tilde \Phi)}
\frac{e^{2i\pi \langle v,\gamma\rangle }}{ \prod_{j=1}^N 2i\pi (\langle \phi_j,\gamma\rangle +z_j)} d_{\Lambda}v.$$
\end{definition}

\medskip

The distribution $\mathcal{B}(\tilde \Phi,\Lambda)$ has the following properties, similar to its nonaffine counterpart:

$\bullet$ If $\Phi$ is the empty set, then $\mathcal{B}(\tilde \Phi,\Lambda)$ is the $\delta$-distribution of the lattice $\Lambda$.

$\bullet$  If $\Lambda_1\subset \Lambda_2$. Then $\mathcal{B}(\tilde \Phi,\Lambda_2)$ is obtained from
$\mathcal{B}(\tilde \Phi,\Lambda_1)$ by averaging over $\Lambda_2/\Lambda_1$:
\begin{equation}\label{E:aver3}
\mathcal{B}(\tilde\Phi,\Lambda_2)=
\sum_{\lambda_2\in \Lambda_2/\Lambda_1}
{\rm t}(\lambda_2)\mathcal{B}(\tilde \Phi,\Lambda_1).
\end{equation}

\medskip

In the special case $z_j=\langle \phi_j,z\rangle $ for $z\in U_\C$, it is more natural to
consider the distribution
$$\Eis(\Phi,\Lambda,z)(v)=\sum_{\gamma\in \Gamma; \langle \phi_j,\gamma+z\rangle \neq 0}
\frac{e^{2i\pi \langle v,\gamma+z\rangle }}{\prod_{j=1}^N 2i\pi \langle \phi_j,\gamma+z\rangle }.$$
Clearly, $$\Eis(\Phi,\Lambda,z)(v)=e^{2i\pi \langle v,z\rangle }  {\mathcal B}(\tilde \Phi,\Lambda)(v)$$
for $\tilde \Phi=[[\phi_1,\langle \phi_1,z\rangle ],\cdots [\phi_N,\langle \phi_N,z\rangle ]]$.
If $z$ is regular, that is $\langle \phi,\gamma\rangle +z\neq 0$
for all $\phi \in \Phi$, then $\Eis(\Phi,\Lambda,z)(v)$ defines a distribution of $v$ with coefficients
meromorphic functions on $T_\C= U_\C/\Gamma$ which is studied in \cite{brver2}.

\begin{example}\label{eg:beraffine}
Let $\Lambda=\Z \omega$, and let $\tilde \Phi_k=[[\omega,z],[\omega,z],\ldots,
[\omega,z]]$ where   $[\omega,z]$ is repeated $k$ times.
If $z$ is integral, we simply have
$$\mathcal{B}(\tilde \Phi_k,\Lambda)(t\omega)=e^{-2i\pi zt}\mathcal{B}(\Phi_k,\Lambda)(t\omega).$$

If $z$ is not integral and $k=1$, then, using Lemma 16 of \cite{brver2},
\begin{equation}\label{jump1}
\mathcal{B}(\tilde \Phi_1,\Lambda)(t\omega)=\sum_{n \in \Z} \frac{e^{2i\pi nt}}{
2i\pi (n+z)}dt=\frac{e^{([t]-t)2i\pi z}}{1-e^{-2i\pi z}},
\end{equation}
which is an analytic function of $t$ in each tope.

If $k>1$, $z$ not integral, and $0<t<1$, we use the residue theorem for the integral
$$\int_{|u|=R}\frac{e^{- tu}}{(2i \pi z-u)^k (1-e^{- u})}du$$
which tends to $0$ when $R$ tends to infinity.  Then,
\[\begin{array}{ll}
\mathcal{B}(\tilde \Phi_k,\Lambda)(t\omega)&=-\Res_{u=2i\pi z}\frac{e^{- tu}}{(2i \pi z-u)^k (1-e^{- u})}du\\
&=e^{-2i\pi zt}\Res_{u=0}\frac{e^{tu}}{u^k(1-e^{-2i\pi z+u})}du.
\end{array}\]
Thus, we see that $\mathcal{B}(\tilde \Phi_k,\Lambda)(t\omega)$ is a product of an exponential function of $t$ and a polynomial in $t$.
In particular, in the interval $0<t<1$, it is an analytic function of $t$.
For example, for $k=2$ and $0<t<1$, we get
$$\mathcal{B}(\tilde \Phi_2,\Lambda)(t\omega)=\frac{e^{-2i\pi zt}}{1-e^{-2i\pi z}}
\left(t+\frac{1}{e^{2i\pi z}-1}\right).$$
\end{example}

\subsection{Recurrence relations}
In the affine case the recurrence relation (\ref{induction}) is slightly modified.

Let $\tilde \phi=[\phi,z]$ be an element of $\tilde \Phi$.
We consider two cases.

$\bullet$ Suppose there exists $\gamma_z\in \Gamma$ such that
\begin{equation}\label{chpar}\langle \phi,\gamma_z\rangle +z=0.\end{equation}
Then, we may express $\gamma \in \Gamma_{\reg}(\tilde \Phi - \{\tilde \phi\})$
satisfying $ \ll \gamma,\phi\rr+z=0$ as $\gamma=\gamma'+\gamma_z$. Clearly, $\langle \gamma',\phi \rangle=0$.

\medskip

We consider the system
$$\tilde \Phi_0=[[\overline\phi_j, z_j+ \langle \phi_j,\gamma_z\rangle ],\;\phi_j\in \Phi-\{\phi\}]$$
in $V_0=V/<\phi>$. The sum
$$\sum_{\gamma' \in \Gamma_{\reg}(\tilde \Phi-\{\tilde \phi\}), \ll
\gamma',\phi\rr=0} \frac{e^{\ll 2i\pi v ,\gamma'\rr}}{\prod_{\phi_j\in \Phi-\{\phi\}} 2i\pi (\ll \phi_j,\gamma' \rr+
\ll \phi_j,\gamma_z \rr +z_j)}$$
is constant in the direction of $\phi$ and identifies with
$\mathcal{B}(\tilde \Phi_0,\Lambda)(\overline v)$.
Hence, we get the following recurrence relation.
\begin{equation}\label{induction2}
(\partial_\phi +2i\pi z) \mathcal{B}(\tilde\Phi,\Lambda)(v)
=\mathcal{B}(\tilde\Phi-\{\tilde\phi\},\Lambda)(v)-
e^{2i\pi\langle v,\gamma_z\rangle }\mathcal{B}(\tilde \Phi_0,\Lambda_0)(\overline v).
\end{equation}

\medskip

$\bullet$ If there does not exist $\gamma_z$ satisfying
the relation (\ref{chpar}), then $\Gamma_{\reg}(\tilde \Phi - \{\tilde \phi\})=\Gamma_{\reg}(\tilde \Phi)$, and the
equation (\ref{induction2}) reduces to $$(\partial_\phi +2i\pi z) \mathcal{B}(\tilde\Phi,\Lambda)(v)
=\mathcal{B}(\tilde\Phi-\{\tilde\phi\},\Lambda)(v).$$

\subsection{Piecewise exponential polynomial behavior}

We consider $\tilde \phi=[\phi,z]\in \tilde \Phi$ with $\phi \neq 0$. Consider the complex hyperplane
$H_{\tilde{\phi}}:=\{u\in U_{\mathbb{C}}: \ll u,\phi\rr+z= 0\}$.
Consider the set $$\tilde \CH=\CH(\tilde \Phi)=\{H_{\tilde{\phi}},\; \tilde \phi \in \tilde \Phi \}$$
of hyperplanes in $U_\C$.
We denote by $\mathcal{R}_{\tilde{\CH}}$ the ring of rational functions on $U_\C$ with poles along $\tilde \CH$.
That is, if $S(V_\C)$ denotes the symmetric algebra of $V_\C$, identified with the ring of polynomial functions on $U_\C$, then
$\mathcal{R}_{\tilde{\CH}}$ is the ring $S(V_\C)$ of polynomial functions on $U_\C$ together with inverses of forms
$\ll \phi, \cdot\rr+z$ for $[\phi,z] \in \tilde\Phi$.

For $g \in \mathcal{R}_{\tilde{\CH}}$
we define the distribution $\mathcal{B}(\tilde{\CH},\Lambda,g)$ on $V$ by
$$\mathcal{B}(\tilde{\CH},\Lambda,g)=\sum_{\gamma \in \Gamma_{\reg}(\tilde \CH)} g(\gamma)e^{2i\pi \ll v,\gamma \rr} d_{\Lambda}v,$$
where $\Gamma_{\reg}(\tilde \CH)=\Gamma_{\reg}(\tilde \Phi)$, as regularity does not depend on the multiplicity of an element in $\tilde \Phi$.

We fix $\tilde \phi \in \tilde \Phi$, and define $\tilde \CH':=\tilde \CH \setminus H_{\tilde \phi}$.
For $g \in \mathcal{R}_{\tilde{\CH'}}$, we compare $\mathcal{B}(\tilde{\CH},\Lambda,g)$ and
$\mathcal{B}(\tilde{\CH'},\Lambda,g)$.

Similar to the nonaffine case, for a fixed $\tilde \phi \in \tilde \Phi$, we define $\tilde \CH':=\tilde \CH \setminus H_{\tilde \phi}$
and $\tilde \CH_0$ to be the collection of affine hyperplanes $H\cap H_{\tilde{\phi}}$ for those $H \in \tilde \CH$ not parallel to  $H_{\tilde \phi}$,
that is, for $H \in \tilde \CH$ associated to $[\phi_j,z_j]\in \tilde \Phi$ with $\phi_j\neq \phi$.
The collection $\tilde \CH_0$ is a collection of affine hyperplanes in the affine space
$H_{\tilde{\phi}}$.

\medskip

We consider two cases:

$\bullet$
There exists $\gamma_z\in \Gamma$ lying in $H_{\tilde \phi}$.  Thus $\ll\phi,\gamma_z\rr+z=0$.
Let $H_0$ be the real hyperplane with equation $\phi=0$.
If $K\in  \tilde \CH_0$, then $K-\{\gamma_z\}$ is a complex hyperplane in $(H_0)_\C$.
Let $\tilde \CH_0^z$ be the collection of hyperplanes $K-\{\gamma_z\}$ with $K\in \tilde \CH_0$.
Then, for  $g \in R_{\tilde \CH'}$, we define $g_0(u):=g(u+\gamma_z)$ lying in $\mathcal{R}_{\tilde \CH_0^z}$.
Let $V_0=V/\R\phi$, and $\Lambda_0$ the image of $\Lambda$ in $V_0$.

It immediately follows from the set theoretic partition in the proof of Proposition \ref{prop:recug} that:

\begin{lemma}\label{recgen}
If $g \in R_{\tilde \CH'}$, then
$$\mathcal{B}(\tilde \CH,\Lambda,g)=\mathcal{B}(\tilde \CH',\Lambda,g)-
e^{2i\pi\langle v,\gamma_z\rangle }p^*
\mathcal{B}(\tilde \CH_0^z,\Lambda_0,g_0).$$
\end{lemma}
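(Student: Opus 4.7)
The proof should closely mirror the argument behind Proposition \ref{prop:recug}, the only new ingredient being a translation by $\gamma_z$ to reduce the affine situation to a linear one on $H_{\tilde\phi}$.

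The plan is to compare the two index sets $\Gamma_{\reg}(\tilde\CH')$ and $\Gamma_{\reg}(\tilde\CH)$. Since $\tilde\CH=\tilde\CH'\cup\{H_{\tilde\phi}\}$ and regularity on $\tilde\CH'$ is strictly weaker than regularity on $\tilde\CH$, we have the disjoint partition
\[
\Gamma_{\reg}(\tilde\CH')=\Gamma_{\reg}(\tilde\CH)\,\sqcup\,\bigl\{\gamma\in\Gamma_{\reg}(\tilde\CH')\,:\,\langle\phi,\gamma\rangle+z=0\bigr\}.
\]
Splitting the defining series of $\mathcal B(\tilde\CH',\Lambda,g)$ accordingly immediately yields
\[
\mathcal B(\tilde\CH',\Lambda,g)=\mathcal B(\tilde\CH,\Lambda,g)+\mathcal S,
\]
where $\mathcal S$ is the contribution from those $\gamma$ that lie on $H_{\tilde\phi}$. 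The lemma then reduces to identifying $\mathcal S$ with $e^{2i\pi\langle v,\gamma_z\rangle}\,p^*\mathcal B(\tilde\CH_0^z,\Lambda_0,g_0)$.

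To do this I would use the assumption that $\gamma_z\in\Gamma\cap H_{\tilde\phi}$ to write any such $\gamma$ as $\gamma=\gamma'+\gamma_z$ with $\gamma'\in\Gamma\cap H_0$, where $H_0=\{\phi=0\}$. The regularity conditions transform as follows: for any $[\phi_j,z_j]\in\tilde\Phi-\{\tilde\phi\}$, one has $\langle\phi_j,\gamma\rangle+z_j=\langle\phi_j,\gamma'\rangle+(z_j+\langle\phi_j,\gamma_z\rangle)$, so $\gamma\in\Gamma_{\reg}(\tilde\CH')$ exactly when $\gamma'$ avoids the hyperplanes of $\tilde\CH_0^z$ inside $(H_0)_{\mathbb C}$, i.e.\ $\gamma'\in\Gamma_0\cap(H_0)_{\reg}(\tilde\CH_0^z)$. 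Under this substitution $g(\gamma)=g_0(\gamma')$ by the definition of $g_0$, while
\[
e^{2i\pi\langle v,\gamma\rangle}=e^{2i\pi\langle v,\gamma_z\rangle}\,e^{2i\pi\langle v,\gamma'\rangle}.
\]
Because $\gamma'\in H_0$, the factor $e^{2i\pi\langle v,\gamma'\rangle}$ is constant along $v+\mathbb R\phi$, so summing over $\gamma'$ produces precisely $p^*\mathcal B(\tilde\CH_0^z,\Lambda_0,g_0)$, and pulling the $\gamma$-independent exponential $e^{2i\pi\langle v,\gamma_z\rangle}$ out of the sum completes the identification.

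The one point requiring some care, which I would treat as the main (though minor) obstacle, is the bookkeeping of the translated arrangement $\tilde\CH_0^z$: one must check that the shift $\gamma_z$ is precisely what turns the residual affine conditions $\langle\phi_j,\gamma\rangle+z_j\neq 0$ on $H_{\tilde\phi}$ into the linear regularity conditions defining $(H_0)_{\reg}(\tilde\CH_0^z)$, and correspondingly that $g_0$ is a well-defined element of $\mathcal R_{\tilde\CH_0^z}$. Everything else is the same set-theoretic partition used in Proposition \ref{prop:recug}, so the convergence of the sums in the sense of distributions is inherited automatically.
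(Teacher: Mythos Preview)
Your proposal is correct and follows essentially the same approach as the paper: the paper's proof is a single sentence stating that the lemma ``immediately follows from the set theoretic partition in the proof of Proposition~\ref{prop:recug},'' and the translation by $\gamma_z$ that you spell out is precisely the bookkeeping already set up in the paragraphs preceding the lemma. You have simply written out in detail what the paper leaves implicit.
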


$\bullet$
In the case that there does not exist any $\gamma_z\in \Gamma$ lying in $H_{\tilde{\phi}}$ and satisfying Equation (\ref{chpar}), we have
$$\mathcal{B}(\tilde \CH,\Lambda,g)=\mathcal{B}(\tilde \CH',\Lambda,g).$$

\bigskip
For a fixed $\tilde \Phi$ we will denote the list of vectors $\phi$ coming from the first component of the pairs in $\tilde\Phi$ by $\Phi$.
Suppose that the vectors in $\Phi$ associated to $\tilde\Phi$ span $V$.  Let $\mathcal{G}_{\tilde{\CH}}$ denote the
subspace of $\mathcal{R}_{\tilde{\CH}}$ generated by functions of the form $$\tilde \theta(L)(x)=\frac{1}{\prod_{\alpha \in L}\ll \alpha,x\rr+z_{\alpha}}$$ where
$L$ is a list of vectors coming from $\Phi$ generating $V$.

We call a function that is a sum of products of exponential functions and polynomial functions an {\it exponential polynomial}.

We will say that a locally $L^1$ function $f$ is \textit{piecewise exponential polynomial}, if there exists a decomposition of $V$ in a union of
polyhedral pieces $C_i$ such that  the restriction of $f$ to $C_i$
is given by a  exponential polynomial formula.  We then say that the distribution
$f(v)d_{\Lambda}v$ is piecewise exponential polynomial.

\begin{proposition}
If $g \in \mathcal{G}_{\tilde{\CH}}$, then $\mathcal{B}(\tilde \CH,\Lambda,g)$ is a piecewise  exponential polynomial distribution.
\end{proposition}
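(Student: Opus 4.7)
The plan is to follow the strategy of the proof of Proposition \ref{compute}, with piecewise polynomial functions replaced by piecewise exponential polynomial functions, and with Example \ref{eg:beraffine} supplying the one-dimensional building blocks. By the definition of $\mathcal{G}_{\tilde{\CH}}$, it suffices to treat the case $g=\tilde\theta(L)$ for some list $L$ of elements of $\tilde\Phi$ whose underlying vectors in $\Phi$ generate $V$. The main induction is on the cardinality of $\tilde\CH$.

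For the base case I would assume that the underlying vectors in $\Phi$ are independent, say $\alpha_1,\ldots,\alpha_r$, possibly with multiplicities, and consider the sublattice $\Lambda'=\bigoplus_i\Z\alpha_i$ of $\Lambda$. In coordinates $v=\sum t_i\alpha_i$ on $V$ and dual coordinates on the dual lattice of $\Lambda'$, the series $\mathcal{B}(\tilde\CH,\Lambda',g)$ factors as a product over $i$ of one-dimensional series of the form $\sum_{n\in\Z}\frac{e^{2i\pi nt_i}}{\prod_j 2i\pi(n+w_{ij})}$. Partial fractions in $n$ reduce each one-dimensional series to a linear combination of those treated in Example \ref{eg:beraffine}, which are piecewise exponential polynomial in $t_i$. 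Taking the product over $i$ gives a piecewise exponential polynomial on the parallelepipeds tiling $V$ by $\Lambda'$, and averaging over $\Lambda/\Lambda'$ via equation (\ref{E:aver3}) preserves this property, so that $\mathcal{B}(\tilde\CH,\Lambda,g)$ is piecewise exponential polynomial.

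For the inductive step, when $|\tilde\CH|>r$, the plan is first to establish an affine analogue of Lemma \ref{ind} decomposing $\tilde\theta(L)$ as a finite sum of terms indexed by bases $\sigma$ extracted from $\tilde\Phi$. This yields an element $\tilde\phi\in\tilde\Phi$ such that $g\in\mathcal{R}_{\tilde\CH'}$; moreover, when a $\gamma_z$ satisfying (\ref{chpar}) exists, the restricted function $g_0$ lies in $\mathcal{G}_{\tilde\CH_0^z}$. Lemma \ref{recgen} then gives
\[\mathcal{B}(\tilde\CH,\Lambda,g)=\mathcal{B}(\tilde\CH',\Lambda,g)-e^{2i\pi\langle v,\gamma_z\rangle}\,p^*\mathcal{B}(\tilde\CH_0^z,\Lambda_0,g_0),\]
and by the induction hypothesis both terms on the right are piecewise exponential polynomial. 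Multiplication by the character $e^{2i\pi\langle v,\gamma_z\rangle}$ and pullback along $p$ preserve this class, so we conclude. The case where no $\gamma_z$ exists is easier, since Lemma \ref{recgen} reduces directly to $\mathcal{B}(\tilde\CH,\Lambda,g)=\mathcal{B}(\tilde\CH',\Lambda,g)$.

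The main obstacle I anticipate is the affine analogue of Lemma \ref{ind}. The partial-fraction identity of the polynomial case becomes $\langle\alpha,x\rangle+z_\alpha=\sum_i c_i(\langle\alpha_i,x\rangle+z_i)+d$, where the constant term $d=z_\alpha-\sum_i c_i z_i$ is in general nonzero, so the naive induction on $|\mathbf n|$ no longer strictly decreases the complexity. A more careful combinatorial argument is needed, perhaps by iterating the identity inside a finite-dimensional localized subspace of $\mathcal{R}_{\tilde\CH}$, or by choosing the basis $\sigma$ adaptively so that the correction term $d$ may be absorbed. Once this reduction is in place, the rest of the proof follows the template of Proposition \ref{compute} very closely.
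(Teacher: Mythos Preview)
Your outline is the paper's proof: reduce to products of one-dimensional series via Example \ref{eg:beraffine} when the underlying vectors are independent, average over $\Lambda/\Lambda'$ by (\ref{E:aver3}), and run the induction on $|\tilde\CH|$ through Lemma \ref{recgen}. The paper is equally terse about the affine analogue of Lemma \ref{ind}, saying only that one ``uses an analogue of Lemma \ref{ind}'' together with the partial fraction
\[
\frac{1}{(\alpha+z_1)(\alpha+z_2)}=\frac{1}{z_1-z_2}\,\frac{1}{\alpha+z_1}+\frac{1}{z_2-z_1}\,\frac{1}{\alpha+z_2}
\]
for factors sharing the same underlying $\alpha$ (a case you already absorb into your one-dimensional base step).

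The obstacle you flag has a direct resolution that the paper leaves implicit. Write $d=z_\alpha-\sum_i c_i z_i$. If $d=0$ then $\langle\alpha,x\rangle+z_\alpha=\sum_i c_i(\langle\alpha_i,x\rangle+z_i)$ and the proof of Lemma \ref{ind} goes through verbatim. If $d\neq 0$, divide the identity $(\langle\alpha,x\rangle+z_\alpha)-\sum_i c_i(\langle\alpha_i,x\rangle+z_i)=d$ by $d$ and multiply into $\tilde\theta(\sigma,\mathbf n)\,(\langle\alpha,x\rangle+z_\alpha)^{-N}$: every term on the right now has total pole order $|\mathbf n|+N-1$, so induction on $|\mathbf n|+N$ terminates. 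Thus the nonzero constant you worried about makes the reduction \emph{easier}, not harder, and no adaptive choice of basis is needed.
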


\begin{proof}
We use the same line of argument as in the proof Proposition \ref{compute}.  As before, we
scale the denominator of $g=\tilde \theta(L)$ such that all $\alpha \in L$ lie in
the lattice $\Lambda$.  In the case that $L$ has independent elements, $\mathcal{B}(\tilde \CH,\Lambda,g)$ can be written as
a product of exponential polynomial functions $\mathcal{B}(\tilde \Phi_k,\Lambda)$, whose expression changes whether
the (scaled) $z$ are integral or not.  The expression for both cases is given explicitly in example \ref{eg:beraffine}
and they are piecewise exponential polynomials.  We then use the averaging formula (\ref{E:aver3}).

In order to reduce the general case to the case of independent vectors we use an analogue of Lemma \ref{ind}, and in
the case that same $\alpha$ with distinct $z$ appears in $\tilde
\Phi$, we use the relation
$$\frac{1}{(\alpha+z_1)(\alpha+z_2)}=\frac{1}{z_1-z_2}\frac{1}{(\alpha+z_1)}+\frac{1}{z_2-z_1}\frac{1}{(\alpha+z_2)}.$$
We then get the claimed property of $\mathcal{B}(\tilde \CH,\Lambda,g)$ by induction using Lemma \ref{recgen}.
\end{proof}

The above proposition for $\tilde
\Phi=[[\phi_1,z_1],\ldots, [\phi_N,z_N]]$ and $$g(x)=\frac{1}{\prod_{j=1}^N 2i\pi (\langle \phi_j,x\rangle +z_j)}$$ gives:

\begin{corollary}
If $\Phi$ associated to $\tilde \Phi$ generates $V$, then $\mathcal{B}(\tilde \Phi,\Lambda)(v)$ is an exponential polynomial
function of $v$ on a tope of $\CT(\Phi,\Lambda)$.
\end{corollary}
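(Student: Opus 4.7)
The plan is to reduce the statement to the Proposition just established and to trace through its proof carefully enough to see that the polyhedral decomposition produced there refines the tope decomposition $\CT(\Phi,\Lambda)$.

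First I would rewrite the multiple Bernoulli series in the framework of the hyperplane arrangement $\tilde{\CH}$. By definition,
$$\mathcal{B}(\tilde\Phi,\Lambda)=\mathcal{B}(\tilde\CH,\Lambda,g)\qquad\text{with}\qquad g(x)=\frac{1}{\prod_{j=1}^{N}2i\pi(\ll\phi_j,x\rr+z_j)}.$$
Since $\Phi$ generates $V$, the list $L$ of first components of $\tilde\Phi$ is a list of vectors generating $V$, so up to the scalar $(2i\pi)^{-N}$ the function $g$ equals $\tilde\theta(L)$ and therefore lies in $\mathcal{G}_{\tilde\CH}$. Applying the Proposition immediately gives that $\mathcal{B}(\tilde\Phi,\Lambda)$ is a piecewise exponential polynomial distribution.

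Next I would argue that the polyhedral decomposition of $V$ on which $\mathcal{B}(\tilde\Phi,\Lambda)$ is an exponential polynomial can be taken to be the tope decomposition $\CT(\Phi,\Lambda)$. This is done by examining the inductive proof of the Proposition. In the base case of independent vectors, the series is a product over coordinates of one-variable series of the type computed in Example \ref{eg:beraffine}, which are exponential polynomial on each interval separated by translates of lattice points, i.e.\ on each component of the complement of the affine walls in $V$. The averaging over $\Lambda/\Lambda'$ in formula (\ref{E:aver3}) only shifts these pieces by lattice vectors, hence preserves the property of being exponential polynomial on topes. In the inductive step, the recurrence of Lemma \ref{recgen} writes $\mathcal{B}(\tilde\CH,\Lambda,g)$ as a difference of $\mathcal{B}(\tilde\CH',\Lambda,g)$ and a term of the form $e^{2i\pi\ll v,\gamma_z\rr}\,p^*\mathcal{B}(\tilde\CH_0^z,\Lambda_0,g_0)$. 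The first summand is piecewise exponential polynomial for a sub-arrangement, whose topes refine into topes of $(\CH,\Lambda)$, and the second is constant in the direction of $\phi$, hence its breakpoints are unions of hyperplanes parallel to $\phi$; since $H_{\phi}$ is a $\Phi$-admissible hyperplane, its translates are affine walls for $(\Phi,\Lambda)$ and the resulting breakpoints are compatible with $\CT(\Phi,\Lambda)$.

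The only real obstacle is bookkeeping this compatibility cleanly across the induction, in particular making sure that in the recurrence the new walls introduced by the second summand are always $\Phi$-admissible affine hyperplanes. Once this is verified, the restriction of $\mathcal{B}(\tilde\Phi,\Lambda)$ to any tope $\tau\in\CT(\Phi,\Lambda)$ coincides with a single exponential polynomial, which is the content of the corollary.
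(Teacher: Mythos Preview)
Your proposal is correct and follows the paper's approach: recognize that $g(x)=\prod_j (2i\pi(\langle\phi_j,x\rangle+z_j))^{-1}$ lies in $\mathcal{G}_{\tilde\CH}$ because $\Phi$ spans $V$, and then invoke the preceding Proposition. The paper dispatches the corollary in one line and leaves the tope compatibility implicit (it is already built into the proof of the Proposition via the analogy with Proposition~\ref{compute}, which establishes piecewise polynomiality \emph{with respect to $(\CH,\Lambda)$}); your extra paragraph tracing this compatibility through the base case, the averaging, and the recurrence of Lemma~\ref{recgen} is a sound elaboration of that point rather than a different argument.
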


\begin{remark} Using the same proof as above, we see that
$\Eis(\Phi,\Lambda,z)(v)=e^{2i\pi \langle v,z\rangle } {\mathcal B}(\tilde \Phi,\Lambda)(v)$
is an exponential polynomial function of $v$ on each tope $\tau$ in $\CT(\Phi,\Lambda)$.
Furthermore, when $z$ is regular, the recurrence relation simplifies to
\begin{equation}\label{induction3}
\partial_\phi  \Eis( \Phi,\Lambda,z)=\Eis(\Phi\setminus\{ \phi\},\Lambda,z).
\end{equation}

The system of relations in (\ref{induction3}) are the relations of Dahmen-Miccelli \cite{dm}.
In particular, on each tope $\tau$, we obtain that
$\Eis(\Phi,\Lambda,z)(v)=\sum  K_i(v) F_i(z)$ where $K_i(v)$ are Dahmen-Micchelli polynomials
and $F_i(z)$ meromorphic functions of $z$.
\end{remark}

\subsection{Wall crossing}

Given a tope $\tau$ in $\CT(\Phi,\Lambda)$, we denote by
$\Ber(\tilde \Phi,\Lambda,\tau)$ the polynomial exponential  function  on $V$ such
that the restriction of $\mathcal{B}(\tilde\Phi,\Lambda)$ to  $\tau$
coincides with the restriction of $\Ber(\tilde\Phi,\Lambda,\tau)(v)
dv$ on $\tau$.

Let $\langle H(\alpha,z)|f\rangle =\int_{t>0} f(t \alpha) e^{-2i\pi tz} dt$.

Given a wall $W$,  assume that we have renumber $\tilde \Phi$
so that $\tilde\Phi=[[\phi_1,z_1],\ldots, [\phi_p,z_p],[\phi_{p+1},z_{p+1}],\ldots, [\phi_{p+q},z_{p+q}]]$
where the first $p$ elements $\phi_k$ belongs to $W$  and the last $q$ elements $\phi_{p+j}$ do not belong to $W$.
Then, we define the lists
$$\tilde \Phi\cap W:=[[\phi_1,z_1],\ldots, [\phi_p,z_p]]$$
and
$$\tilde \Phi\setminus W:=[[\phi_{p+1},z_{p+1}],\ldots, [\phi_{p+q},z_{p+q}]].$$

Let $E$ be an equation for the wall $W$.  We define
$$T(\tilde \Phi\setminus W,E):=\displaystyle
\prod_{\langle \phi_i,E\rangle <0}-H(-\phi_i,-z_i)*\prod_{\langle \phi_i,E\rangle > 0}
H(\phi_i,z_i).$$

\medskip

We remark that due to the periodicity property of $\mathcal{B}(\tilde \Phi,\Lambda)$ it suffices to consider jumps over an hyperplane
$W$ passing through the origin.  We have, similar to Theorem \ref{T:jumpber},

\begin{theorem}\label{T:jumpaffine}
Let $\tau_1$ and $\tau_2$ be two adjacent topes of $\CT(\Phi,
\Lambda)$ separated by the hyperplane $W$, with equation $E$. Assume that $\ll v,E\rr>0$ for $v\in \tau_1$. Denote by $\tau_{12}$ the
tope in $\CT(\Phi \cap W,\Lambda\cap W)$ containing
$\overline{\tau_1} \cap \overline{\tau_2}$ in its closure. Let
$\Ber(\tilde \Phi\cap W,\Lambda \cap W,\tau_{12})dh$ be the analytic density
on $W$ determined by $\tau_{12}$. Then, $$(\Ber(\tilde
\Phi,\Lambda,\tau_1)-\Ber(\tilde \Phi,\Lambda,\tau_2))dv$$
$$= \Ber(\tilde
\Phi\cap W,\tau_{12})*T(\tilde \Phi \setminus W,E)-\Ber(\tilde
\Phi\cap W,\tau_{12})*T(\tilde \Phi \setminus W,-E).$$
\end{theorem}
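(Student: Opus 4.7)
The strategy is to mimic the proof of the non-affine wall-crossing formula (Theorem \ref{T:jumpber}), replacing $\partial_\phi$ everywhere by the twisted derivation $\partial_\phi + 2i\pi z$ and the ordinary Heaviside $H(\phi)$ by its affine analogue $H(\phi,z)$. The key compatibility is the identity $(\partial_\phi + 2i\pi z) H(\phi,z) = \delta_0$, which one checks by integration by parts, and symmetrically $(\partial_\phi + 2i\pi z) H(-\phi,-z) = -\delta_0$. These make the modified derivative play the same structural role in the affine setting as $\partial_\phi$ did before.

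First, handle the base case $\tilde\Phi\setminus W=\{[\phi,z]\}$ directly by averaging. Set $\Lambda_b = (\Lambda\cap W)\oplus \Z\phi$ and write
\[
\mathcal{B}(\tilde\Phi,\Lambda)(v) = \sum_{\lambda_j\in \Lambda/\Lambda_b} \mathcal{B}(\tilde\Phi,\Lambda_b)(v+\lambda_j).
\]
Because $\Lambda_b$ is a direct sum, $\mathcal{B}(\tilde\Phi,\Lambda_b)$ factors as $\mathcal{B}(\tilde\Phi\cap W,\Lambda\cap W)(h)\cdot \mathcal{B}([\phi,z],\Z\phi)(t\phi)$. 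Using the closed form in Example~\ref{eg:beraffine}, the second factor is analytic away from integer points and has jump $1$ at each integer; representatives $\lambda_j = h_j+t_j\phi$ with $t_j\notin\Z$ therefore contribute nothing to the jump at $W$, and only the trivial coset survives. Computing $\Ber^{\tau_{12}}*H(\phi,z)+\Ber^{\tau_{12}}*H(-\phi,-z)$ at $v=h+t\phi$ gives exactly the matching expression on the other side, with the same Jacobian factor $1/\langle E,\phi\rangle$ that appears in the non-affine proof.

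For the inductive step, assume the theorem for all $\tilde\Phi$ with fewer vectors outside $W$ and pick $[\phi,z]\in\tilde\Phi\setminus W$. Let $\tilde\Phi'=\tilde\Phi-\{[\phi,z]\}$. Since $\phi\notin W$, the topes $\tau_1,\tau_2$ project to the same tope $\tau_0$ of $V/\R\phi$, so applying the affine recurrence (\ref{induction2}) to each of $\Ber(\tilde\Phi,\Lambda,\tau_1)$ and $\Ber(\tilde\Phi,\Lambda,\tau_2)$ and subtracting makes the $\tilde\Phi_0$ term cancel:
\[
(\partial_\phi+2i\pi z)\bigl(\Ber(\tilde\Phi,\Lambda,\tau_1)-\Ber(\tilde\Phi,\Lambda,\tau_2)\bigr) = \Ber(\tilde\Phi',\Lambda,\tau_1')-\Ber(\tilde\Phi',\Lambda,\tau_2').
\]
On the right-hand side of the theorem, factor $T(\tilde\Phi\setminus W,\pm E) = T(\tilde\Phi'\setminus W,\pm E) * (\pm H(\pm\phi,\pm z))$ and use the two $\delta_0$-identities above; the same expression results. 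By the inductive hypothesis, $(\partial_\phi+2i\pi z)$ applied to the difference of the two sides of the claimed formula vanishes, so this difference lies in the kernel of that operator, i.e., equals $g(h)e^{-2i\pi tz}$ for some function $g$ on $W$.

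It remains to pin down $g=0$. The argument is the same as in the non-affine case: both sides of the claimed equality are continuous across $W$ whenever $|\tilde\Phi\setminus W|>1$. For the left-hand side, $(\partial_\phi+2i\pi z)\mathcal{B}(\tilde\Phi,\Lambda)$ is locally $L^\infty$ by (\ref{induction2}) (each term on its right is a piecewise exponential polynomial density), so $\mathcal{B}(\tilde\Phi,\Lambda)$ is continuous on $W$. For the right-hand side, an affine analogue of Lemma~\ref{L:derPol}(c)---the convolution $\Ber^{\tau_{12}}*T(\tilde\Phi\setminus W,\pm E)$ vanishes of order $|\tilde\Phi\setminus W|-1$ on $W$---follows by the same one-dimensional residue argument, so the restriction to $W$ is zero as well. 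Hence $g(h)e^{-2i\pi tz}\equiv 0$ on $W$, forcing $g\equiv 0$ and closing the induction. I expect the main subtlety to be the bookkeeping in the base case: verifying that in the affine setting the jump of the one-dimensional factor is still $1$ whether or not $z$ is integral, and that the measure-theoretic normalizations in the convolution with $H(\phi,z)$ match the Jacobian $1/\langle E,\phi\rangle$ coming from the averaging argument.
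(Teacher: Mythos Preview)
Your proposal is correct and follows essentially the same approach as the paper. The paper's proof is extremely terse---it simply says to repeat the argument of Theorem~\ref{T:jumpber}, reducing the base case to a product situation and computing the one-dimensional factor via Formula~(\ref{jump1})---and your write-up fills in precisely those details, including the correct replacement of $\partial_\phi$ by $\partial_\phi+2i\pi z$ and the identification of its kernel as $g(h)e^{-2i\pi tz}$. One small wording issue: in the base case the jump of the one-dimensional factor across $t=0$ is the function $e^{-2i\pi zt}$ (as the example following Theorem~\ref{T:jumpaffine} confirms), not literally the constant $1$; your subsequent computation with $H(\phi,z)+H(-\phi,-z)$ handles this correctly, so the slip is purely verbal.
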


\begin{proof}
The proof follows the same line of argument as in the proof of Theorem \ref{T:jumpber}.  For the first inductive step,
 we are reduced by the same argument as in Theorem   \ref{T:jumpber} to a product situation of $W$ with the line $\R \phi$.
 Then we compute explicitly using Formula (\ref{jump1}).
\end{proof}

\begin{example} Recall the data of Example \ref{eg:beraffine}.  Let $\tau_1$ and $\tau_2$ be
two adjacent topes
defined by inequalities $0<t<1$ and $-1<t<0$ respectively. By
Theorem \ref{T:jumpaffine},
$$\Ber(\tilde \Phi_1,\Lambda,\tau_1)(t)-\Ber(\tilde \Phi_1,\Lambda,\tau_2)(t)=e^{-2i\pi z t},$$ which is also
seen from the explicit expression of $\mathcal{B}(\tilde \Phi_1,\Lambda)(t\omega)$ in
Example \ref{eg:beraffine}.
\end{example}

\section{A decomposition formula}\label{section:decomp}
Let $\Lambda$ and $\Phi$ be as before. We do not necessarily assume that $\Phi$ generates $V$.

In this section, we express $\mathcal{B}(\Phi,\Lambda)$ as a sum of distributions $\mathcal A(\Phi,\Lambda,\a,\beta)$ associated to affine admissible subspaces $\a$ and a generic vector $\beta$ in $V$.

Let us start the construction of   the distribution  $\mathcal A(\Phi,\Lambda,\a,\beta)$.
\medskip

Let $\s$ be a $\Phi$-admissible subspace of $V$. Then $\Phi\cap \s$ generate $\s$, and $\Lambda\cap
\s$ is a lattice in $\s$.  Let $\tau$ be a tope in $\CT(\Phi\cap\s,\Lambda\cap\s)$. We can then consider the distribution
$B(\tau)(s):=\Ber(\Phi\cap \s, \Lambda\cap
\s,\tau)(s)ds$. It is a polynomial density on $\s$.
We still denote by $B({\tau})$ this distribution considered as a
distribution on $V$:
$$\langle B(\tau),test\rangle =\int_{\s} test(s) B(\tau)(s) ds.$$

Let $\lambda\in \Lambda$. Then $\a:=\lambda+\s$ is an affine
$\Phi$-admissible subspace of $V$.
We say that $\a$ is of direction $\s$.
By definition,  a tope  $\tau$ of $\a$ is such that
$\tau-\lambda$ is a tope in $\s$.  We define $B(\Phi\cap\s,\tau)$ as a distribution supported on $\a$
by the formula
$$\langle B(\Phi\cap\s,\tau),test\rangle =\int_{\s} test(s+\lambda) B(\tau-\lambda)(s).$$
We remark that the definition of $B(\Phi\cap\s,\tau)$ above depends only
on $\tau$ and not on the choice of $\lambda$. Indeed, for
another $\lambda'\in \Lambda$ such that $\a=\lambda'+\s$, $\lambda'$ is necessarily
of the form $\lambda'= \lambda+\lambda_0$ for some $\lambda_0 \in \Lambda\cap
\s$. Then,
$$\int_{\s} test(s+\lambda+\lambda_0) B(\tau-\lambda-\lambda_0)(s)
= \int_{\s} test(s+\lambda)
B(\tau-\lambda-\lambda_0)(s-\lambda_0).$$ Using relation
(\ref{periodic}), we have
$B(\tau-\lambda-\lambda_0)(s-\lambda_0)=B(\tau-\lambda)(s)$, hence the independence of the expression.

\bigskip

For a $\Phi$-admissible subspace $\s$, consider an element $u\in U$ vanishing on $\s$ and polarizing
for $\Phi\setminus \s$.  Then,  the multispline distribution $T(\Phi\setminus \s,u)$  is well defined.

\begin{definition}
Let $\a$ be a $\Phi$-admissible affine subspace of $V$ of direction $\s$. Let
$\tau$ be a tope in $\a$, and let $u\in U$ be a vector vanishing on $\s$ and polarizing
for $\Phi\setminus \s$. Then, we define
$$\A(\Phi,\Lambda,\a,\tau,u):= B(\Phi\cap \s,\tau)* T(\Phi\setminus
\s,u).$$
\end{definition}

The distribution $\A(\Phi,\Lambda,\a,\tau,u)$ is supported on $\a+u_{\geq 0}$. It is polynomial in the direction $\s$.

\begin{remark}
Choose a direct sum decomposition  $V=\s\oplus \mathfrak r$ and express $v\in V$ as $v=s+r$ for
$s\in \s$ and $r\in \mathfrak r$.  If  $\Phi$ is equal to $\Phi\cap \s\oplus \Phi\cap \mathfrak r$,
then the function $\A(\Phi,\Lambda,\a,\tau,u)$ is, in product coordinates $(s,r)$, the product of
$B(\Phi\cap \s,\tau)(s)ds$ with $T(\Phi\setminus\s,u)(r)$.
In general it is still possible to express
$\A(\Phi,\Lambda,\a,\tau,u)(s,r)$  as a linear combination of product of multispline functions on $\mathfrak r$ and polynomials on $\s$.
\end{remark}

\medskip

Our main theorem is that $\mathcal{B}(\Phi,\Lambda)$ can be decomposed as a
sum of distributions $\A(\Phi,\Lambda,\a,\tau,u)$ over all
$\Phi$-admissible affine subspaces $\a$ for conveniently chosen
$\tau$ and $u$.  Thus we think of the distributions   $\A(\Phi,\Lambda,\a,\tau,u)$ as the basic building
blocks of the theory.

\bigskip
Choose a scalar product  $\langle,\rangle$ on $V$.
If $W$ is a subspace of $V$, or a quotient space of $V$, then $W$ inherits a scalar product.

\medskip

Let $\beta\in V$, and let $\a$ be a $\Phi$ admissible affine subspace of direction $\s$. We
can then write $\beta=\beta_0-\beta_1$ where $\beta_0\in \a$ and
$\beta_1\in \s^{\perp}$. The point $\beta_0$ is the orthogonal
projection of $\beta$ on $\a$. Assume $\beta$ generic so that

$\bullet$ the point  $\beta_0$  lies in a  tope $\tau(\beta_0)$ of
$\a$.

$\bullet$ the element $\beta_1$  is polarizing  for $\Phi\setminus \s$:
  $\langle \phi,\beta_1\rangle\neq 0$ for all $\phi\in \Phi$ and not in $\s$.

We can then define
$$\A(\Phi,\Lambda,\a,\beta):= \A(\Phi,\Lambda,\a,\tau(\beta_0),\beta_1).$$

\begin{theorem}\label{decomposition}
Choose $\beta\in V$  generic. Then, we have
\begin{equation}\label{E:decomp}
\mathcal{B}(\Phi,\Lambda)=\sum_{\a} \A(\Phi,\Lambda,\a,\beta).
\end{equation}
Here the sum is over all admissible affine subspaces $\a$.

\end{theorem}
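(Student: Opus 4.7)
The plan is to follow the combinatorial approach of Szenes--Vergne \cite{sze-ve2} and prove the decomposition by combining a tope-wise initial check with a wall-crossing bootstrap, together with an induction on $\dim V$. Write $D(\beta) := \sum_{\a}\A(\Phi,\Lambda,\a,\beta)$. Since $\mathcal{B}(\Phi,\Lambda)$ and $D(\beta)$ are both periodic, piecewise polynomial distributions with respect to $(\Phi,\Lambda)$, it suffices to check that they agree on one tope and that they jump identically across every admissible affine wall.

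First, I would verify equality on the tope $\tau(\beta)$ containing the base point. Fix an admissible affine subspace $\a \ne V$ with direction $\s \subsetneq V$, and write $\beta = \beta_0 - \beta_1$ with $\beta_0 \in \a$ the orthogonal projection of $\beta$ and $\beta_1 \in \s^\perp \setminus \{0\}$. The convolution
\[
\A(\Phi,\Lambda,\a,\beta) \;=\; B(\Phi\cap\s,\tau(\beta_0)) \,*\, T(\Phi\setminus\s,\beta_1)
\]
is supported in the closed set $\a + C$, where $C$ denotes the closed polyhedral cone polarized by $\beta_1$ and generated by $\Phi\setminus\s$. The boundary of this support is a union of affine admissible walls, and the relation $\langle\beta_1,\beta-\beta_0\rangle = -\|\beta_1\|^2 < 0$ places $\beta$ strictly outside $\a + C$. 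Because the tope $\tau(\beta)$ is a connected component of the complement of all admissible walls, it stays entirely on the $\beta$-side of each wall bounding $\a + C$, so $\tau(\beta) \cap (\a + C) = \varnothing$ and $\A(\Phi,\Lambda,\a,\beta)$ restricts to zero on $\tau(\beta)$. Only the term $\a = V$ survives, giving $\A(\Phi,\Lambda,V,\beta) = \Ber(\Phi,\Lambda,\tau(\beta))(v)\,dv$, which coincides with $\mathcal{B}(\Phi,\Lambda)$ on $\tau(\beta)$ by definition.

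Second, I would show that $D(\beta)$ and $\mathcal{B}(\Phi,\Lambda)$ undergo the same jump across every admissible affine wall $W$. The jump of the left-hand side is given by Theorem \ref{T:jumpber}. For $D(\beta)$, a term $\A(\Phi,\Lambda,\a,\beta)$ has zero jump across $W$ unless the direction $\s$ of $\a$ is contained in $W$: the discontinuities of the spline factor $T(\Phi\setminus\s,\beta_1)$, lifted to $V$, occur across hyperplanes containing $\s$, so $W$ is such a hyperplane exactly when $\s \subset W$. Grouping the contributing terms, I would use the factorization
\[
\A(\Phi,\Lambda,\a,\beta) \;=\; \A(\Phi\cap W, \Lambda\cap W, \a, \beta_W) \,*\, T(\Phi\setminus W, \pm E),
\]
obtained by splitting $\Phi\setminus\s = (\Phi\cap W \setminus \s) \sqcup (\Phi\setminus W)$ and the corresponding spline accordingly, where $\beta_W$ is the orthogonal projection of $\beta$ onto $W$ and the sign of $E$ is dictated by the side of $W$ under consideration. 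Applying the inductive hypothesis on $W$, namely $\mathcal{B}(\Phi\cap W, \Lambda\cap W) = \sum_{\a\subset W} \A(\Phi\cap W, \Lambda\cap W, \a, \beta_W)$, restricted to $\tau_{12}$, the collected jump reassembles into $\Ber^{\tau_{12}} * T(\Phi\setminus W, E) - \Ber^{\tau_{12}} * T(\Phi\setminus W, -E)$, matching Theorem \ref{T:jumpber}.

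The main obstacle is the wall-crossing identification in the second step. It requires two delicate bookkeeping tasks: establishing the convolution factorization $\A = \A_W * T(\Phi\setminus W,\pm E)$, which entails decomposing the spline $T(\Phi\setminus\s,\beta_1)$ into an internal factor on $W$ convolved with a transverse factor and checking that polarizations are compatible on each side of $W$; and verifying that the aggregate of the terms with $\s \subset W$ contributes, via the induction on $\dim V$ applied to the $(r{-}1)$-dimensional system on $W$, exactly $\Ber^{\tau_{12}}\,dh$ on $\tau_{12}$. With these ingredients in place, the wall-crossing matching propagates the equality established on $\tau(\beta)$ in the first step to every tope of $\CT(\Phi,\Lambda)$, yielding the theorem.
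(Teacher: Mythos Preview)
Your approach is viable but takes a genuinely different route from the paper. The paper argues by induction on $|\Phi|$ (not on $\dim V$) and by differentiation (not wall crossing): for each $\phi\in\Phi$ one applies $\partial_\phi$ to both sides; on the left this yields $\mathcal{B}(\Phi-\{\phi\},\Lambda)-p^*\mathcal{B}(\Phi_0,\Lambda_0)$ by (\ref{induction}), while on the right Lemma~\ref{recurrence} computes $\partial_\phi\A(\Phi,\Lambda,\a,\beta)$ term by term, and the sum reorganizes into $\sum_{\a}\A(\Phi-\{\phi\},\Lambda,\a,\beta)-\sum_{\phi\in\s}\A(\Phi/\langle\phi\rangle,\Lambda/\langle\phi\rangle,\a/\langle\phi\rangle,\beta)$. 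By the inductive hypothesis on the shorter lists $\Phi-\{\phi\}$ and $\Phi/\langle\phi\rangle$, both pieces match, so $\partial_\phi$ kills ${\rm Leq}-{\rm Req}$ for every $\phi\in\Phi$; the difference is therefore constant, and the constant is zero by your first step (equality on $\tau(\beta)$), which the paper also invokes.

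The paper's argument is considerably shorter and sidesteps the two obstacles you correctly identify: no spline factorization $\A=\A_W*T(\Phi\setminus W,\pm E)$ is needed, and no polarization compatibility across $W$ has to be checked. Your route, by contrast, makes the connection with the wall-crossing Theorem~\ref{T:jumpber} explicit and is closer in spirit to the Hamiltonian picture the paper alludes to. Two small points to tighten in your sketch: the correct condition for $\A(\Phi,\Lambda,\a,\beta)$ to jump across an affine wall $W$ is $\a\subset W$, not merely that the direction $\s$ lies in the direction of $W$ (this sharper condition is exactly what makes your factorization and the inductive identification on $W$ go through); and your opening claim that $D(\beta)$ is periodic is not obvious a priori, since the individual summands are not --- fortunately your argument does not actually use it.
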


            The sum above is infinite. But
remark that, given a vector $v \in V$ by the definition of $\A(\Phi,\Lambda,\a,\beta)$,
there exists only finitely many $\Phi$-admissible affine spaces $\a$ such that $\A(\Phi,\Lambda,\a,\beta)$
gives a non zero contribution at the element $v\in V$, therefore
the above sum is well defined.

For example, if $\s=0$, then the affine spaces $\a$ of direction $\s$ are reduced to the points
$\lambda$ in $\Lambda$ and $$\A(\Phi,\Lambda,\{\lambda\},\beta)=\delta_\lambda*T(\Phi,\lambda-\beta).$$
We see that $\A(\Phi,\Lambda,\{\lambda\},\beta)$  is supported in an affine space $\lambda+\xi$ with
$\langle \xi,\lambda-\beta\rangle >0$. Thus the points $v$ in the support
satisfy  $\|v\|^2\geq \|\lambda\|^2-\|\beta\|^2$.
In particular the sum of the distributions
$$\sum_{\lambda\in \Lambda}
\A(\Phi,\Lambda,\{\lambda\},\beta)$$ is well defined.
Similar estimates hold for any admissible subspace $\s$, when  considering the sum over all affine spaces of direction $\s$ .

\begin{remark}
If $\Phi$ generates $V$, then $V$ is admissible, and the term corresponding to $V$ is the polynomial density
$\Ber(\Phi, \Lambda,\tau(\beta))$, with $\tau(\beta)$ the tope containing $\beta$.
 The other distributions $\A(\Phi,\Lambda,\a,\beta)$ with $\a\neq V$ are piecewise polynomial densities with support not intersecting $\tau$.
 \end{remark}

Theorem \ref{decomposition} has the following meaning: although the distribution
$\mathcal{B}(\Phi,\Lambda)$ is very complicated, it is however
obtained by superposing simpler functions which are products of
polynomials and multisplines.

Before giving the proof of this theorem we demonstrate the decomposition in
various examples and state a recurrence relation.

\begin{example}
Let $\Phi=\emptyset$.  Then, by definition,
$\mathcal{B}(\Phi,\Lambda)(v)=\sum_{\gamma\in
\Lambda^*}e^{2i\pi\langle \gamma,v\rangle }$. We would like to decompose this sum
as in equation (\ref{E:decomp}). Observe that in this case $\a$
consists of points of $\Lambda$ and any $\beta$ in $V$ is generic.
Using $T(\emptyset,\beta-\lambda)=\delta_0$, the decomposition in
Theorem \ref{decomposition} gives
$$\sum_{\gamma\in \Lambda^*}e^{2i\pi\langle \gamma,v\rangle }=\sum_{\lambda\in
\Lambda}\delta_{\lambda}(v),$$ which is the Poisson formula.
\end{example}

\begin{example}(one dimensional case)
Let $\Lambda=\Z \omega$, and let $\Phi_k=[\omega,\omega,\ldots,
\omega]$, where  $\omega$ is repeated $k$ times.  Then $\s=0$ or
$\s=V=\R \omega$, correspondingly $\a$ are reduced to points
$\{\lambda\}$ in $\Lambda$ or $\a=V$. Choose any $\beta=r\omega \in
V$ with $0<r<1$, it is generic. The polynomial
$\Ber(\Phi,\Lambda,\tau(\beta))(t\omega)$ which coincide with
$\mathcal{B}(\Phi_k,\Lambda)(t)$ on $0<t<1$ is
$-\frac{1}{k!}B(k,t)$, where $B(k,t)$ is the Bernoulli polynomial.
Then, with the notation of Example \ref{eq:spline}, Theorem \ref{decomposition} gives,

$$\begin{array}{ll}
\mathcal{B}(\Phi_k,\Lambda)(t\omega)=&-\frac{1}{k!}B(k,t)dt+\displaystyle
\sum_{n\in\Z_{>0}} \delta_{n\omega}*T(\Phi_k,\omega^*) \\
&+\displaystyle
\sum_{n\in\Z_{\leq 0}} \delta_{n\omega}*T(\Phi_k,-\omega^*).
\end{array}$$

In Figure \ref{spline2} we depict the decomposition of $\mathcal{B}(\Phi_2,\Lambda)(t\omega)$. In part $(a)$ we draw the graph of the periodic polynomial
$-\frac{1}{2}B(2,t-[t])$, the red graph in part $(b)$ is the graph of the polynomial $-\frac{1}{2}B(2,t)$ and lines in black correspond to contribution
of splines.

\end{example}

\begin{figure}
\begin{center}
\includegraphics[width=40mm]{spline2.mps}\hspace{2cm}
 \includegraphics[width=40mm]{spline1.mps}\\

  (a)\hspace{5.5cm}(b)
 \caption{The decomposition of $\mathcal{B}(\Phi_2,\Lambda)(t\omega)$.}\label{spline2}
 \end{center}
\end{figure}

Let us now study the recurrence relations that the distributions $\mathcal A(\Phi,\Lambda,\a,\beta)$  satisfy. It will be convenient to
define  $\mathcal A(\Phi,\Lambda,\a,\beta)$ for any affine subspace $\a$, by declaring it  to be equal to zero if $\a$ is not admissible.
If $\a=\lambda+\s$ where $\phi\in \s$, we denote by $\a/<\phi>$ the image of the rational space
$\a$ in $V_0=V/<\phi>$.

\begin{lemma}\label{recurrence}
Let $\phi\in \Phi$ and $\beta\in V$.  We still denote by $\beta$
the projection of $\beta$ on $V/<\phi>$.

$(i)$ If $\phi\notin \s$, then
$$\partial_\phi \A(\Phi,\Lambda,\a,\beta)= \A(\Phi\setminus
\{\phi\},\Lambda,\a,\beta).$$

$(ii)$ If $\phi\in \s$, then
$$\partial_\phi \A(\Phi,\Lambda,\a,\beta)= \A(\Phi\setminus
\{\phi\},\Lambda,\a,\beta)-\A(\Phi/<\phi>,\Lambda/<\phi>,\a/<\phi>,\beta).$$
\end{lemma}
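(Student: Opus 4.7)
My plan is to differentiate the defining convolution
$\A(\Phi,\Lambda,\a,\beta) = B(\Phi\cap \s,\tau(\beta_0)) * T(\Phi\setminus \s,\beta_1)$
in the direction of $\phi$ and let the derivative fall on whichever factor has support extending in that direction, splitting into the two cases.

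For case $(i)$, where $\phi\notin \s$, the support $\a$ of $B(\Phi\cap \s,\tau(\beta_0))$ is transverse to $\phi$, so $\partial_\phi$ passes through the convolution onto $T(\Phi\setminus \s,\beta_1)$. Since $\phi\in \Phi\setminus \s$ and the polarization $\beta_1\in \s^\perp$ remains polarizing for $(\Phi\setminus \s)-\{\phi\}$, I will apply the standard multispline identity $\partial_\phi T(X,u)=T(X-\{\phi\},u)$. Combined with the obvious identifications $(\Phi\setminus\{\phi\})\cap\s=\Phi\cap\s$ and $(\Phi\setminus\{\phi\})\setminus\s=(\Phi\setminus\s)-\{\phi\}$, this immediately reassembles into $\A(\Phi\setminus\{\phi\},\Lambda,\a,\beta)$.

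For case $(ii)$, where $\phi\in \s$, the derivative acts on the $B$-factor along its own support. Applying the piecewise polynomial recurrence (\ref{phiphi1})--(\ref{phiphi2}) inside $\s$ (translated from $\s$ to $\a$ via the translation $\lambda\in\Lambda$ that takes one to the other), I get
$$\partial_\phi B(\Phi\cap \s,\tau(\beta_0))= B((\Phi\cap \s)-\{\phi\},\tau(\beta_0))- p_\s^*\, B((\Phi\cap \s)_0,\tau_0),$$
where $p_\s\colon \s\to \s/\!<\!\phi\!>$, $\tau_0$ is the projected tope, and the first term is interpreted as $0$ if $(\Phi\cap \s)-\{\phi\}$ fails to generate $\s$. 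Convolving with $T(\Phi\setminus \s,\beta_1)$ turns the first piece into $\A(\Phi\setminus\{\phi\},\Lambda,\a,\beta)$. For the second piece, let $p\colon V\to V/\!<\!\phi\!>$ extend $p_\s$. Since $p^*B((\Phi\cap \s)_0,\tau_0)$ is $\phi$-invariant, so is its convolution with $T(\Phi\setminus \s,\beta_1)$; a Fourier computation then identifies that convolution with the pullback along $p$ of the corresponding convolution performed downstairs, namely
$$p^*\!\bigl(B((\Phi\cap \s)_0,\tau_0)\,*\,T(p(\Phi\setminus \s),\beta_1)\bigr),$$
which is exactly $\A(\Phi/\!<\!\phi\!>,\Lambda/\!<\!\phi\!>,\a/\!<\!\phi\!>,\beta)$ after matching the quotient data: $(\Phi/\!<\!\phi\!>)\cap(\s/\!<\!\phi\!>)=(\Phi\cap \s)_0$, $(\Phi/\!<\!\phi\!>)\setminus(\s/\!<\!\phi\!>)=p(\Phi\setminus \s)$, and (using the quotient inner product) the decomposition $p(\beta)=p(\beta_0)-\beta_1$ with $\beta_1\in(\s/\!<\!\phi\!>)^\perp$, making $\tau_0=\tau(p(\beta_0))$.

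The main obstacle lies in this quotient identification in case $(ii)$: I must check that $p_*T(\Phi\setminus \s,\beta_1)=T(p(\Phi\setminus \s),\beta_1)$, which in turn requires that the pushforward be well-defined. Here the choice $\beta_1\in \s^\perp$ is essential: since $\beta_1$ vanishes on $\phi\in \s$ but is nonzero on every $\phi_i\in \Phi\setminus \s$, the polarized cone that supports $T(\Phi\setminus \s,\beta_1)$ meets the line $<\phi>$ only at the origin, so fibers of $p$ cut it in compact sets. The pushforward of each Heaviside then satisfies $p_*H(\phi_i)=H(p(\phi_i))$, and pushforward commutes with convolution for these cone-supported distributions. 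The various degenerate cases are handled by the vanishing conventions: if $\phi$ appears in $\Phi\cap \s$ with multiplicity greater than one, then $(\Phi\cap \s)_0$ contains $0$ and both the pulled-back term and the quotient $\A$ vanish; if $(\Phi\cap \s)-\{\phi\}$ does not generate $\s$, then $\a$ is not $(\Phi\setminus\{\phi\})$-admissible and $\A(\Phi\setminus\{\phi\},\Lambda,\a,\beta)$ is defined to be zero, matching the corresponding absence in the recurrence.
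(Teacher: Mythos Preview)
Your proposal is correct and follows essentially the same approach as the paper: differentiate the convolution, let $\partial_\phi$ fall on the multispline factor in case $(i)$ and on the polynomial factor in case $(ii)$, then invoke the recurrence for $\Ber$. The only minor difference is in identifying the second term of case $(ii)$: where you argue via a pushforward identity $p_*T(\Phi\setminus\s,\beta_1)=T(p(\Phi\setminus\s),\beta_1)$ and verify the support condition, the paper simply writes out the defining integral for $q*T(X)$ and uses $\phi$-invariance of $q$ to replace $q(v-\sum t_i v_i)$ by $q(\bar v-\sum t_i\bar v_i)$, obtaining the quotient convolution in one line.
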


In $(ii)$,  $\A(\Phi\setminus
\{\phi\},\Lambda,\a,\beta)$ is zero when $\Phi\cap \s \setminus \{\phi\}$ does not generate $\s$.

\begin{proof}
Part $(i)$ follows from the relation
$$\partial_\phi T(\Phi\setminus\s,\beta_1)= T((\Phi\setminus \{\phi\})\setminus \s,\beta_1).$$

For part $(ii)$ we use equation (\ref{induction}) on $\tau$, which gives
$$\partial_\phi \Ber(\Phi \cap \s,\Lambda\cap\s ,\tau)=
\Ber((\Phi\cap \s)\setminus \{\phi\},\Lambda\cap \s,\tau)-\Ber(\Phi\cap
\s/<\phi>,\Lambda\cap \s/<\phi>,\tau).$$

Now suppose $q$ is a polynomial function on $\s$ constant in the
direction of $\phi$. Let $X:=[v_1,v_2,\ldots, v_N]$ be a sequence of nonzero vectors in $V\setminus \s$
 generating a pointed cone. We denote the projection of
$v_i$ to $V/<\phi>$ by $\bar v_i$. Then,
$$ \begin{array}{ll} q*T(X)(v)&=\displaystyle \int_{0}^{\infty}\cdots \int_{ 0}^{\infty}
q(v-\sum t_i v_i)dt_1\cdots dt_N\\
&=\displaystyle
\int_{0}^{\infty}\cdots \int_{0}^{\infty} q(\overline v-\sum t_i\bar{v_i}) dt_1\cdots dt_N\\
&= \displaystyle q*T(X/<\phi>)(\overline v).
\end{array}$$
Putting $q=\Ber(\Phi\cap \s/<\phi>,\Lambda\cap \s/<\phi>,\tau)$ and
$X=\Phi\setminus \s$, we get part $(ii)$.
\end{proof}

\begin{proof}
We now prove Theorem \ref{decomposition} by induction on the number of elements in
$\Phi$. We assume that the theorem is true for any sublist  of $\Phi$.
Denote by ${\rm Req}(\Phi)$ the right hand side and by ${\rm Leq}(\Phi)$ the left hand side of equation (\ref{E:decomp}).

Let $\phi\in \Phi$. Let $\Phi'=\Phi\setminus \{\phi\}$. We recall
equation (\ref{induction}),  $$\partial_\phi
\mathcal{B}(\Phi,\Lambda)=\mathcal{B}(\Phi',\Lambda)-\mathcal{B}(\Phi/<\phi>,\Lambda/<\phi>).$$

Let $\CR_{\rm aff}(\Phi)$ be the collection of all $\Phi$-admissible affine  subspaces of $V$.
Let  $\CR^0$ be the subset consisting of the elements $\a$ whose direction $\s$ contains $\phi$.

We now differentiate ${\rm Req}(\Phi)$ with respect to $\phi$.  Using relations given in part $(i)$ and
$(ii)$ of Lemma \ref{recurrence}, we get
$$
\partial_\phi {\rm Req}(\Phi)= \sum_{\a\in \CR_{\rm aff}(\Phi)} \A (\Phi',\Lambda, \a,\beta)
-\sum_{\a\in \CR^{0}} \A (\Phi/<\phi>,\Lambda/<\phi>,\a/<\phi>,\beta).
$$
We observe that the collection of $\a/<\phi>$  with $\a\in  \CR^0$ parametrizes all  affine spaces  admissible
for $\Phi/<\phi>$. The collection   $\CR_{\rm aff}(\Phi)$ may be larger than
$\CR_{\rm aff}(\Phi')$, but, if $\a$ is not in $\CR_{\rm aff}(\Phi')$, then the contribution
$\A (\Phi',\Lambda, \a,\beta)$ is equal to $0$.

Hence, we obtain by induction that $\partial_\phi({\rm Leq}(\Phi)-{\rm Req}(\Phi))=0$ for any $\phi$.

Thus
${\rm Leq}(\Phi)-{\rm Req}(\Phi)$ is constant. But by construction
${\rm Leq}(\Phi)-{\rm Req}(\Phi)$ is equal to zero on $\tau$, therefore the constant is zero.
\end{proof}

\begin{example}
We will give a decomposition formula for the system in Example \ref{bernoulli2}.
We recall the data: $\Lambda=\Z e_1\oplus \Z e_2$, $\Phi=[e_1,e_2,e_1+e_2]$.

We will compute $\mathcal{B}(\Phi,\Lambda)(v)$ using the
decomposition formula for $v$ in the tope defined by the
inequalities $0<v_2<1$, $1<v_1<2$ and $v_1-v_2>1$ (see figure
\ref{decompbeta}).

\medskip
We aim to demonstrate the dependence of the summands in the
decomposition formula in Theorem \ref{decomposition} to the chosen generic point in
a tope, though the value of $\mathcal{B}(\Phi,\Lambda)(v)$ is
clearly independent of this choice. We will thus decompose
$\mathcal{B}(\Phi,\Lambda)(v)$ in two different ways, for two
different choices of generic points lying in the \textit{same} tope.

\begin{figure}
\begin{center}
  \includegraphics[width=70mm]{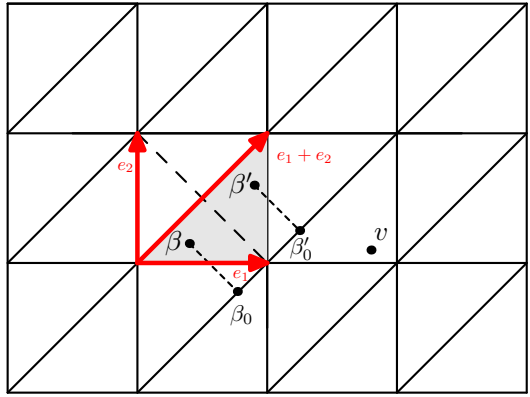}\\
\caption{Decomposition for various generic points}\label{decompbeta}
\end{center}
\end{figure}

For the first (resp.~second) computation we choose a generic $\beta=b_1e_1+b_2e_2$ (resp.~$\beta'$)
in the tope defined by $0<b_1<1$, $0<b_2<1$ and $b_1>b_2$, and further satisfying $b_1+b_2<1$ (resp.~$b_1+b_2>1$).

Figure \ref{decompbeta} depicts two such choice of generic elements.
We denote the projection of $\beta$ and $\beta'$ to
$-e_2\oplus \R(e_1+e_2)$ by $\beta_0$ and $\beta_0'$ respectively.
Since these projections
lie in different topes for the reduced system $(\Phi\cap \s,\Lambda\cap \s)$ their corresponding contribution to the sum
in Theorem \ref{decomposition} will be different.  In fact, choosing $\beta$ as the generic point will enforce a nonzero
contribution of the lattice point $(1,0)$ in evaluating the distribution at a point $v$ as depicted in Figure \ref{decompbeta}.

\medskip

\noindent \textit{Computation with generic point $\beta$}:
$$\begin{array}{ll}
\mathcal{B}(\Phi,\Lambda)(v)=&\A(\Phi,\Lambda,V,\beta)(v)+\A(\Phi,\Lambda,\R e_2+e_1,\beta)(v)\\
&+\A(\Phi,\Lambda,\R(e_1+e_2)-e_2,\beta)(v)+\A(\Phi,\Lambda,\{(1,0)\},\beta)(v).
\end{array}$$

We now compute each summand using the formula in Lemma \ref{defPol}.
$$\begin{array}{ll}
\A(\Phi,\Lambda,\R e_2+e_1,\beta)(v)&=\Ber(e_2,\tau(\beta_0))*T(\{e_1,e_1+e_2\},\beta_1)(v)\\
&=\delta_{(1,0)}*\Res_{z=0}\left(\left((1/2-\partial{x_2})\cdot\frac{e^{\langle v, x+ze^1\rangle }}{(x_1+z)(x_1+x_2+z)}\right)_{x=0}\right)\\
&=\frac{1}{2}(v_1-1)(-2v_2+v_1)
\end{array}$$
$$\begin{array}{ll}
\A(\Phi,\Lambda,\R(e_1+e_2)-e_2,\beta)(v)&=\Ber(e_1+e_2,\tau(\beta_0))*T(\{e_1,e_2\},\beta_1)(v)\\
&=\delta_{(0,-1)}*-\Res_{z=0}\left(\left((1/2-\partial{x_1})\cdot\frac{e^{\langle v, x+z(-e^1+e^2)\rangle }}{(x_1-x_2-z)(x_2+z)}\right)_{x=0}\right)\\
&=-\frac{1}{2}(-v_1-v_2)(v_1-v_2-1)
\end{array}$$
$$\begin{array}{ll}
\A(\Phi,\Lambda,\{(1,0)\},\beta)(v)&=\delta_{(1,0)}*-T(e_1,-e_2,e_1+e_2)=-(v_1-1-v_2)
\end{array}$$
Using the computation in example \ref{bernoulli2} for $\A(\Phi,\Lambda,V,\beta)(v)$, we get
\[\begin{array}{lll}
\mathcal{B}(\Phi,\Lambda)(v)&=& -\frac{1}{6}(v_1-2v_2)(v_1-1+v_2)(2v_1-1-v_2)+\frac{1}{2}(v_1-1)(-2v_2+v_1)\\
& &+\frac{1}{2}(v_1+v_2)(v_1-v_2-1)-(v_1-1-v_2)\\
&=&-\frac{1}{6}(v_1-1-2v_2)(2v_1-3-v_2)(v_1-2+v_2)
\end{array}\]

\medskip

\noindent \textit{Computation with generic point $\beta'$}:
$$\begin{array}{ll}
\mathcal{B}(\Phi,\Lambda)(v)=&\A(\Phi,\Lambda,V,\beta')(v)+\A(\Phi,\Lambda,\R e_2+e_1,\beta')(v)+\\
&\A(\Phi,\Lambda,\R(e_1+e_2)+e_1,\beta')(v).
\end{array}$$
The first two summands in the decomposition above are already computed. The third summand equals:
$$\A(\Phi,\Lambda,\R(e_1+e_2)+e_1,\beta')(v)=-\frac{1}{2}(2-v_1-v_2)(v_1-1-v_2).$$
We then have
\[\begin{array}{lll}
\mathcal{B}(\Phi,\Lambda)(v)&=&-\frac{1}{6}(v_1-2v_2)(v_1-1+v_2)(2v_1-1-v_2)+\frac{1}{2}(v_1-1)(-2v_2+v_1)\\
& &-\frac{1}{2}(2-v_1-v_2)(v_1-1-v_2)\\
&=&-\frac{1}{6}(v_1-1-2v_2)(2v_1-3-v_2)(v_1-2+v_2)
\end{array}\]
as expected.
\end{example}

\medskip

In the affine case, we define
$$\A(\tilde \Phi,\Lambda,\a,\tau,u)= \Ber(\tilde \Phi\cap \s,\tau)* T(\tilde \Phi\setminus
\s,u).$$  We have a decomposition formula analogous to Theorem \ref{decomposition}.

\begin{theorem}\label{decompositionaff}
Choose $\beta\in V$ sufficiently generic. Then we have
\[\mathcal{B}(\tilde \Phi,\Lambda)=\sum_{\a} \A(\tilde \Phi,\Lambda,\a,\beta).\] \end{theorem}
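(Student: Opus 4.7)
The plan is to mimic the proof of Theorem \ref{decomposition} by induction on $|\Phi|$, using the affine recurrence (\ref{induction2}) in place of (\ref{induction}). The base case $\Phi=\emptyset$ reduces to the Poisson formula, exactly as in the nonaffine setting, since $T(\emptyset, u) = \delta_0$ and $\mathcal{B}(\emptyset,\Lambda)$ is the $\delta$-distribution of $\Lambda$.

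First I would establish the analogue of Lemma \ref{recurrence} in the augmented setting. The key identity on the multispline side is
\[
(\partial_\phi + 2i\pi z)\, T(\tilde\Phi\setminus\s, u) \;=\; T((\tilde\Phi-\{\tilde\phi\})\setminus\s, u),
\]
which follows from differentiating the convolution $H(\phi, z)$ (or $-H(-\phi,-z)$), since $(\partial_\phi + 2i\pi z)H(\phi,z) = \delta_0$. This handles the case $\phi\notin\s$ and gives
\[
(\partial_\phi + 2i\pi z)\,\A(\tilde\Phi,\Lambda,\a,\beta) = \A(\tilde\Phi-\{\tilde\phi\},\Lambda,\a,\beta).
\]
For $\phi\in\s$, I would combine this with the analogue of (\ref{induction}) applied to $\Ber(\tilde\Phi\cap\s,\tau)$ (whose differentiation produces an exponential-polynomial, not just a polynomial, with the extra factor $e^{2i\pi\langle v,\gamma_z\rangle}$ as in (\ref{induction2})). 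Convolution of this extra term in the direction $\s$ with $T(\tilde\Phi\setminus\s,u)$ reproduces, after projection to $V/\langle\phi\rangle$, the distribution $\A(\tilde\Phi_0,\Lambda_0,\a/\langle\phi\rangle,\beta)$ multiplied by $e^{2i\pi\langle v,\gamma_z\rangle}$.

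Next, I would sum the identity over all $\Phi$-admissible affine subspaces $\a$ and observe, exactly as in the nonaffine proof, that the collection $\{\a/\langle\phi\rangle : \phi\in\mathrm{dir}(\a)\}$ is a bijective reparametrization of the $(\Phi/\langle\phi\rangle)$-admissible affine subspaces of $V/\langle\phi\rangle$, while contributions from $\a\in\CR_{\mathrm{aff}}(\Phi)\setminus\CR_{\mathrm{aff}}(\Phi')$ vanish. Denoting the right-hand side of the theorem by ${\rm Req}(\tilde\Phi)$ and the left-hand side by ${\rm Leq}(\tilde\Phi)$, this gives
\[
(\partial_\phi + 2i\pi z)\bigl({\rm Leq}(\tilde\Phi)-{\rm Req}(\tilde\Phi)\bigr) \;=\; \bigl({\rm Leq}-{\rm Req}\bigr)(\tilde\Phi-\{\tilde\phi\}) \;-\; e^{2i\pi\langle v,\gamma_z\rangle}\bigl({\rm Leq}-{\rm Req}\bigr)(\tilde\Phi_0),
\]
which vanishes by the inductive hypothesis. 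Therefore ${\rm Leq}(\tilde\Phi)-{\rm Req}(\tilde\Phi)$ lies in the kernel of each operator $\partial_\phi+2i\pi z$ for $\tilde\phi\in\tilde\Phi$; since the $\phi$'s span $V$ (after reduction to the case where $\Phi$ generates $V$, handling the general case by restriction to $\langle\Phi\rangle$), this difference must be an exponential, whose value is pinned down by evaluation on the tope $\tau(\beta)$ containing $\beta$: there only the term $\a=V$ contributes, giving $\Ber(\tilde\Phi,\Lambda,\tau(\beta))$, which equals $\mathcal{B}(\tilde\Phi,\Lambda)$ on $\tau(\beta)$.

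The main obstacle will be carefully tracking the exponential shift $e^{2i\pi\langle v,\gamma_z\rangle}$ through the convolution structure, because in the affine case the ``lower-dimensional'' Bernoulli density $\Ber(\tilde\Phi_0\cap\s,\tau_0)$ on $\a/\langle\phi\rangle$ is not simply a restriction but a translate-twisted object, and one must verify that the two parametrizations (admissible affine subspaces of $V$ containing $\phi$ in their direction, versus admissible affine subspaces of $V/\langle\phi\rangle$) match up with their respective choices of polarizing $\beta_1$ and projected tope $\tau(\beta_0)$. Once this bookkeeping is in place, the constancy-modulo-exponential argument in the final step, and the reduction of the final constant to $0$ via evaluation on $\tau(\beta)$, proceed exactly as in Theorem \ref{decomposition}.
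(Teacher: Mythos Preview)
Your proposal is correct and follows exactly the approach the paper indicates: the paper's own proof of Theorem \ref{decompositionaff} consists of the single sentence ``The proof is precisely in the same line of arguments with that of Theorem \ref{decomposition},'' and you have faithfully carried out that adaptation, replacing $\partial_\phi$ by $\partial_\phi+2i\pi z$ and tracking the exponential twist $e^{2i\pi\langle v,\gamma_z\rangle}$ through the recurrence. Your observation that the difference, being annihilated by every $\partial_\phi+2i\pi z_\phi$, is either a fixed exponential (pinned down on $\tau(\beta)$) or identically zero (when the $z_j$ are incompatible with a linear relation among the $\phi_j$) is the correct affine replacement for the ``constant'' step in the nonaffine proof.
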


The proof is precisely in the same line of arguments with that of Theorem \ref{decomposition}.

\newpage

\end{document}